\documentclass[11pt]{article}
\usepackage{graphicx} 
\usepackage[margin=1in]{geometry}
\usepackage[utf8]{inputenc}
\usepackage[english]{babel}
\usepackage{amssymb}
\usepackage{amsmath}
\usepackage{amsfonts}
\usepackage{bbm}
\usepackage{booktabs}
\usepackage{array}
\usepackage{verbatim}
\usepackage{xcolor}
\usepackage{geometry}
\usepackage{setspace}
\usepackage{amsthm}
\graphicspath{ {./images/} }
\usepackage{graphicx}
\usepackage{float} 
\usepackage{subfigure}
\usepackage{enumitem}
\usepackage{float}
\usepackage{csquotes}

\usepackage{setspace}
\singlespacing 

\usepackage{authblk}
\usepackage{orcidlink}

\DeclareMathOperator*{\argmin}{arg\,min}

\usepackage[linesnumbered,ruled,vlined]{algorithm2e} 

\theoremstyle{definition}
\newtheorem{theorem}{Theorem}[section]

\newtheorem{assumption}[theorem]{Assumption} 
\newtheorem{lemma}[theorem]{Lemma}
\newtheorem{remark}[theorem]{Remark}
\newtheorem{proposition}[theorem]{Proposition}

\numberwithin{equation}{section}

\title{Convergence of Sinkhorn's Algorithm for Entropic Martingale Optimal Transport Problem}

\author[12]{Fan CHEN\,\orcidlink{0000-0003-0082-7908}}
\author[3]{Giovanni CONFORTI\,\orcidlink{0000-0001-5939-1016}}
\author[4]{Zhenjie REN\,\orcidlink{0000-0003-4656-4074}}
\author[2]{Xiaozhen WANG\,\orcidlink{0009-0009-9286-0094}}
\affil[1]{School of Mathematical Sciences, Shanghai Jiao Tong University, Shanghai, China}
\affil[2]{CEREMADE, Université Paris-Dauphine, PSL, Paris, France}
\affil[3]{Dipartimento Matematica, Università degli Studi di Padova, Padova, Italy}
\affil[4]{LaMME, Université Évry Paris-Saclay, Évry-Courcouronnes, France}

\begin{document}

\maketitle
\begin{abstract}
    In this paper, we study the Entropic Martingale Optimal Transport (EMOT) problem on $\mathbb{R}$. The investigation of the EMOT problem arises in the calibration problem of the Stochastic Volatility Models, where martingale constraints reflect no-arbitrage pricing conditions under the risk-neutral measure, as originally proposed by Henry-Labordère. We first establish the dual formulation of the EMOT problem and prove that Sinkhorn’s algorithm achieves an exponential convergence rate under mild conditions. Notably, our analysis does not presuppose the existence of optimal potentials and rigorously confirms the absence of a primal-dual gap. These results provide a theoretical foundation for solving EMOT via Sinkhorn’s method and constructing the optimal distribution from dual coefficients.
\end{abstract}

\renewcommand{\thefootnote}{\fnsymbol{footnote}}
\footnotetext[1]{The third named author’s research was supported by the Finance For Energy Market Research Centre,  the France 2030 grant (ANR-21-EXES-0003), and PEPR PDE-AI project.}

\section{Introduction}
The Optimal Transport (OT) problem has been extensively studied in the literature \cite{santambrogio2015optimal,villani2009optimal}, addressing fundamental aspects such as well-posedness, representation, and geometric perspectives, and its applications can be found in widespread fields such as computer vision and data science \cite{arjovsky2017wasserstein, flamary2021pot, peyre2019computational}. The core objective of OT is to determine the most cost-effective method for transporting one distribution to another. The Martingale Optimal Transport (MOT) problem, emerging in the last decade, follows from the first introduction by Beiglböck et al. \cite{beiglbock2013model} and Galichon et al. \cite{galichon2014stochastic}, further requires that the transport itself satisfies the martingale property: let $\mu$, $\nu$ be two probability measures and $c(\cdot)$ be a point-to-point transport cost, we aim to minimize over $\pi\in\mathcal{M}_0(\mu,\nu)\subset \mathcal P(\mathcal X\times\mathcal Y)$,
\begin{equation}
\label{eq:primal_intro_without_reg}
    \inf_{\pi\in \mathcal{M}_0(\mu,\nu)} \int_{\mathcal X\times \mathcal Y} c(x,y)d\pi,
\end{equation}
over all joint distributions with marginal distribution $\mu$ and $\nu$ and the martingale property, i.e., 
\begin{equation*}
    \int_{A\times \mathcal Y}d\pi = \mu(A), \quad \int_{\mathcal X\times B}d\pi = \nu(B), \quad \int_{\mathcal Y}(y-x)d\pi_{y|x} = 0,
\end{equation*}
for any Borel sets $A\in \mathcal B(\mathcal X),B\in \mathcal B(\mathcal Y)$, where $\pi_{y|x}$ is the conditional probability measure defined via the disintegration $\pi = \mu \cdot \pi_{y|x}$; that is, for $\mu-$almost every $x \in \mathcal{X}$, the martingale condition holds.
According to Strassen's theorem \cite{strassen1965existence}, for the 1-D case when $\mathcal X = \mathcal Y =\mathbb R$, the existence of a martingale transport between $\mu$ and $\nu$ (assume the first order moment is finite) is equivalent to $\mu \preceq_c \nu$, which means that $\mu$ and $\nu$ satisfy the convex order, that is $\int_{\mathbb{R}} f(x) \mu(d x) \leq \int_{\mathbb{R}} f(y) v(d y)$
for every convex function $f: \mathbb{R} \rightarrow (-\infty, \infty]$. 

As a brief review of the literature, we mention that the martingale optimal transport problem is recognized as the dual of the model-free super-hedging problem \cite{de2018entropic, dolinsky2014martingale}. Further investigations have focused on duality theory, elaborated in works by Bartl et al., Cheridito et al., Guo et al., and Hou et al. \cite{bartl2019duality,cheridito2017duality,guo2016monotonicity, hou2018robust}, with a comprehensive list of references provided by Cheridito et al. \cite{cheridito2021martingale}. Additionally, the optimal Skorokhod embedding problem aligns with the continuous-time MOT through time changes, as explored by Beiglböck et al. \cite{beiglbock2017optimal,beiglbock2021fine}. Research on the stability of MOT, conducted by Backhoff et al. and Wiesel \cite{backhoff2022stability, wiesel2019continuity}, offers theoretical insights into the errors that may arise from statistical distribution estimates. In \cite{doldi2023entropy,doldi2024entropy}, the authors study the MOT with entropic penalty.
On a computational front, Guo et al. proposed methodologies for solving the MOT problem using discretization, relaxation of the marginal condition, and conversion to linear programming \cite{guo2019computational}. More recently, Hiew, Nenna, and Pass~\cite{hiew2024ordinary} proposed an ODE-based method for solving entropy-regularized MOT problems. Although not using Sinkhorn iterations, their approach highlights the flexibility of entropic methods in handling martingale constraints. We shall mention more connections between MOT and its applications in finance in Section \ref{sec:applications}. 

Slightly different from \eqref{eq:primal_intro_without_reg}, in this paper the state spaces are set up as $\mathcal X, \mathcal Y, \mathcal Z = \mathbb R$. The inclusion of the $\mathcal{Z}$ space allows us to incorporate an additional random variable, for example random volatility, into the model, while the asset price variables at two future time points are modeled within the $\mathcal{X} \times \mathcal{Y}$ space. We define $\Pi(\mu, \nu)$ as the set of all joint distributions on the space $\mathcal{X} \times \mathcal{Y} \times \mathcal{Z}$, with the condition that their marginal distributions on $\mathcal{X}$ is $\mu$ and on $\mathcal{Y}$ is $\nu$:
\begin{equation*}
\begin{aligned}
\Pi(\mu, \nu)=\{\pi \in \mathcal{P}(\mathcal X\times \mathcal Y\times\mathcal Z),\ s.t.\ \pi(A \times \mathcal Y\times\mathcal Z)=\mu(A), \pi(\mathcal X \times &B\times \mathcal Z)=\nu(B),\\
&\ \text { for any } A,B \in \mathcal B(\mathbb R)\}.
\end{aligned}
\end{equation*}
We denote the set of all joint distributions in $\Pi(\mu,\nu)$ which further exhibits the martingale property between spaces $\mathcal X$ and $\mathcal Y$ to be
\begin{equation}\label{eq:def-M}
    \mathcal M(\mu,\nu) := \Big\{\pi \in  \Pi(\mu,\nu),\ s.t.\ \int_{\mathcal Y\times\mathcal Z} (y-x)\pi_{y,z|x}(dy) = 0, \ \ \mu{\text{-a.s.}}\ x \in \mathcal X\Big\}.
\end{equation}
Given $\rho \in \mathcal P(\mathcal Z)$ and a prior reference Gibbs probability measure $Q\in\mathcal P(\mathcal X\times\mathcal Y\times \mathcal Z)$ such that $dQ/d(\mu\otimes \nu\otimes\rho) = \exp(-c(x,y,z))$, where we implicitly assume $\int \exp(-c(x,y,z))  d\mu\otimes \nu \otimes \rho = 1$, our EMOT problem is to solve: 
\begin{equation}
\label{eq:primal_intro}
    \inf_{\pi\in \mathcal M(\mu,\nu)} H(\pi|Q) = \inf_{\pi \in \mathcal M(\mu,\nu)} H(\pi|\mu \otimes \nu\otimes \rho) + \int_{\mathcal X\times \mathcal Y\times \mathcal Z} c(x,y,z)d\pi,
\end{equation}
where $H(\pi|Q)$ represents the relative entropy with respect to reference measure $Q$. Compared to \eqref{eq:primal_intro_without_reg}, in our EMOT problem formulation \eqref{eq:primal_intro}, we replace the linear cost function on $\pi$ by a nonlinear one. 

\begin{remark}
\label{rem:rho}
The auxiliary space $\mathcal Z$ provides additional flexibility in the modeling. It allows for incorporating latent factors such as stochastic volatility, regime changes, and other time-dependent features. For example, in the context of Stochastic Volatility models, the spaces $\mathcal X$ and $\mathcal Y$ accommodate the asset price variables at time $T_1$ and $T_2$ (with $T_1<T_2$), while space $\mathcal Z$ corresponds to the volatility variable at $T_1$, see Section \ref{sec:calibration}. Indeed, the probability measure $\rho$ on the $\mathcal Z$ space is not a marginal constraint of the joint distribution; rather, it serves as a base probability measure that jointly supports the reference measure $Q$. 

From a technical perspective, introducing \(\rho\) allows the reference measure \(Q\) to factorize with respect to \(\mu \otimes \nu \otimes \rho\), which simplifies the entropy term and facilitates the implementation of Sinkhorn updates.
\end{remark}

\begin{remark}
We can decompose the relative entropy into two parts:
\begin{equation*}
    H(\pi|Q) = \int_{\mathcal X\times \mathcal Y} \log \frac{d\pi_{x,y}}{dQ_{x,y}}(x,y)d\pi_{x,y} + \int_{\mathcal X\times \mathcal Y}\Big(\int_{\mathcal Z} \log \frac{d\pi_{z|x,y}}{dQ_{z|x,y}}(z)d\pi_{z|x,y}\Big)d\pi_{x,y},
\end{equation*}
and we find that the optimality is attained when $\pi_{z|x,y} = Q_{z|x,y}$, $\pi_{x,y}-$a.s.. 
\end{remark}

When the martingale constraint is removed, this optimization problem is referred to as the Entropic Optimal Transport (EOT) problem. Initially considered an approximation to the traditional optimal transport problem, EOT is detailed further, including historical context and references, in \cite{peyre2019computational}. The EOT problem can be efficiently solved using Sinkhorn's algorithm, introduced in the seminal work by Cuturi~\cite{cuturi2013sinkhorn}; around the same time, Galichon and Salanié~\cite{galichon2022cupid} independently explored entropy-regularized formulations in matching models, marking an early development of entropic methods in optimal transport.
Subsequent developments include iterative Bregman projections \cite{benamou2015iterative}, scaling methods for unbalanced OT \cite{chizat2018scaling}, and stabilized sparse algorithms \cite{schmitzer2019stabilized}, which significantly broadened the applicability and stability of Sinkhorn-type solvers, making them especially effective in approximating Monge–Kantorovich optimal transport solutions, particularly in high-dimensional machine learning tasks where the regularization intensity is gradually reduced~\cite{peyre2019quantum, solomon2015convolutional}.

The primal problem of EMOT \eqref{eq:primal_intro} can be connected with its dual problem, which reads
\begin{equation}
\label{eq:dual_intro}
   \sup_{f\in L^\infty(\mu),g\in L^\infty(\nu),h\in L^\infty(\mu)} \mathcal G(f,g,h) := - Z(f,g,h) - \int_{\mathcal X} f(x)\mu(dx) - \int_{\mathcal Y} g(y) \nu(dy),
\end{equation}
where  $Z(f,g,h)$ is the total mass of a positive measure which will be precised later.
We are going to solve the dual problem \eqref{eq:dual_intro}  via Sinkhorn's algorithm, named after Sinkhorn and Knopp by their work \cite{sinkhorn1967concerning},  a coordinate ascent algorithm that can be written as
\begin{equation}
\label{eq:sinkhorn_alg_min_conti}
\begin{aligned}
    &\tilde{h}^{n+1} = \argmin_{h\in L^{\infty}(\mu)} \mathcal G(f^n,g^n,h), \\
    &\tilde{f}^{n+1} = \argmin_{f\in L^{\infty}(\mu) } \mathcal G(f,g^n,{\tilde h}^{n+1}), \\
    &\tilde{g}^{n+1} = \argmin_{g\in L^{\infty}(\nu)} \mathcal G(\tilde{f}^{n+1},g,\tilde{h}^{n+1}), \\
\end{aligned}
\end{equation}
and we normalize $(\tilde{f}^{n+1},\tilde{g}^{n+1},\tilde{h}^{n+1}) $ as
\begin{equation*}
    (f^{n+1},g^{n+1},h^{n+1}) = \big(\tilde{f}^{n+1}+ \lambda_{\tilde{g}^{n+1}}-\lambda_{\tilde{h}^{n+1}} x,\tilde{g}^{n+1}-\lambda_{\tilde{g}^{n+1}}+\lambda_{\tilde{h}^{n+1}} y,\tilde h^{n+1}-\lambda_{\tilde{h}^{n+1}}\big),
\end{equation*}
where the normalization coefficients $\lambda_{\tilde{g}^{n+1}} = \int_{\mathcal Y}\tilde{g}^{n+1}(y)\nu(dy)$, $\lambda_{\tilde{h}^{n+1}} = \int_{\mathcal X}\tilde{h}^{n+1}(x)\mu(dx)$, and the initial functions $(f^0,g^0,h^0)$ are given and satisfy some mild conditions, see Algorithm \ref{alg:sinkhorn_alg} for the pseudo-code. Sinkhorn's algorithm is renowned for its capability to address both two-marginal and multi-marginal EOT problems, achieving exponential convergence rates as demonstrated in \cite{carlier2022linear, cuturi2013sinkhorn, ghosal2022convergence}. Further research can be found in relaxing the regularity assumption on the cost $c(\cdot)$ \cite{chen2016entropic,nutz2021introduction,nutz2023stability,ruschendorf1995convergence}. However, the martingale constraint introduces additional difficulties to the convergence analysis, primarily due to the dual coefficient $h$, as discussed in \cite{nutz2024martingale}. Martingale Sinkhorn's algorithm, similar to \eqref{eq:sinkhorn_alg_min_conti} without the $\mathcal{Z}$-component, was first formulated in \cite{de2018entropic,de2019building}. While many numerical experiments exhibit the appealing performance of the martingale Sinkhorn's algorithm in fast convergence, for example, \cite{de2018entropic, de2019building, guyon2024dispersion, guyon2022fast},  a rigorous theoretical guarantee was still at large. 
Our convergence analysis follows the general framework of coordinate ascent methods as in~\cite{beck2013convergence}, and also relates to recent results on multimarginal Sinkhorn convergence~\cite{carlier2022linear}, while addressing the additional difficulty from the martingale constraint.

In this paper we make an effort to fill the gaps in the convergence analysis in \cite{de2018entropic, de2019building} and in particular avoid assuming a priori the absence of a primal–dual gap.  Our main Theorem \ref{thm:existence_minimizer_linear_convergence} is four-folded: 
\begin{itemize}
    \item We prove the exponential convergence of Sinkhorn's algorithm solving the dual problem \eqref{eq:dual_intro};
    \item  We show the existence of the optimal dual potentials $f^*,h^*\in L^\infty(\mu)$, $g^*\in L^\infty(\nu)$ that attain the maximum of the dual problem \eqref{eq:dual_intro}, and the convergence is in the $L^\infty$ norm, i.e., $f^n\to f^*$, $h^n\to h^*$ in $L^\infty(\mu)$ and $g^n\to g^*$ in $L^\infty(\nu)$;
    \item The dual functions $f^n,g^n,h^n$
    converges exponentially to $f^*,g^*,h^*$ in the $L^2$ norm;
    \item  We verify that the induced probability measure $\pi(f^*,g^*,h^*)$ is a solution to the primal problem \eqref{eq:primal_intro} and ensure the absence of the primal-dual gap between \eqref{eq:primal_intro} and \eqref{eq:dual_intro}.
\end{itemize} 

Besides \cite{de2018entropic, de2019building}, the most relevant research to us is the recent work \cite{nutz2024martingale}, in which the authors proved the existence of the optimal dual potentials as well as the absence of the primal-dual gap using a compactness argument. Notably, in their argument the authors introduced an auxiliary problem equivalent to the dual one, which helps to prove uniform local boundedness of $(h^n)_n$ $\mu$-a.s.. 
However, their work is currently limited to the case that the transport cost $c(\cdot)$ is a constant. 

Our paper is structured as follows: In Section \ref{sec:main_result}, we rigorously define the Sinkhorn iterations, and then state our main convergence result including the uniform boundedness of potentials $(f^n,g^n,h^n)_n$ and their subsequent exponential convergence. A numerical experiment in the context of quantitative finance is included in Section \ref{sec:applications}. Detailed proofs of our main results are laid out in Section \ref{sec:main_result_proofs}.

\subsection{Notations}
We use $\mathcal B(\mathbb R)$ to represent all Borel sets in $\mathbb R$, and use $\mathcal P(\mathbb R)$ to denote all the probability measures on $\mathbb R$ with $\sigma$-field $\mathcal B(\mathbb R)$, and $\mathcal M^+(\mathbb R)$ for the set of all positive measures in $\mathbb R^d$. For two measures $\mu$ and $\nu$, we use the notation $\mu\ll\nu$ if $\mu$ is absolutely continuous with respect to $\nu$, and $\mu\sim\nu$ means $\mu$ and $\nu$ are equivalent. We denote $\mu \otimes \nu$ as the product measure of $\mu$ and $\nu$. The support of a measure $\mu$ is denoted as $\textrm{supp}(\mu)$. We use $C^0$, $C^1$ to denote the function space for all functions which are continuous or continuously differentiable. 

A vector is denoted as $x = (x_1,x_2,...,x_d)\in\mathbb R^d$. We use $|x|$ to denote the Euclidean distance. For a measure $\mu\in \mathcal P (\mathbb R^d)$, we denote its marginal distribution on its $i$-th dimension as $\mu_i$. We consider the transport between state spaces $\mathcal X$ and $\mathcal Y$, and for $x\in \mathcal X$, $y\in\mathcal Y$, we use $\pi_{y|x}$ to denote the conditional distribution of $y$ given $x$. We denote the function space $L^p(\mu)$ to be the $L^p$ space with respect to measure $\mu$, and the $L^\infty(\mu)$ to represent the space of all functions essentially bounded under measure $\mu$. Sometimes, we may omit the measure in the bracket when there is no ambiguity.

The relative entropy is defined as 
\begin{equation*}
    H(\pi|Q) = \begin{cases}
        \int \log \frac{d\pi}{dQ} d\pi, & \pi \ll Q,\\
        +\infty, & \mbox{elsewhere}.
    \end{cases}
\end{equation*}
In particular, $H(\pi|Q) \ge 0$ and $H(\pi|Q) = 0$ if and only if $\pi = Q$.

We denote by $[d] := \{1,2,...,d\}$, and by $\mathbb N$ (resp. $\mathbb N^+$)  the set of (resp. positive) natural numbers.
\section{Main Results}
\label{sec:main_result}
Recall our state space $\mathcal X\times \mathcal Y\times \mathcal Z=\mathbb R^3$ and probability measures $\mu,\nu,\rho\in\mathcal P(\mathbb R)$, the EMOT problem can be formulated as
\begin{equation}
    \label{eq:mg_transport}
    \inf_{\pi \in \mathcal M(\mu,\nu)} H(\pi|Q),
\end{equation}
where $\mathcal{M}(\mu, \nu)$ is defined in \eqref{eq:def-M} and $Q$ is the reference probability measure with log-density $-c$, that is, 
\[dQ/d(\mu\otimes \nu\otimes\rho) = \exp(-c(x,y,z)).\]
Before we present the dual problem and Sinkhorn's algorithm, we first list some assumptions on marginal distributions and the regularity of function $c$.
\begin{assumption}
\label{assu:mu_nu}
\begin{itemize}
    \item[(i)] $\mu$, $\nu$ are supported on compact sets;
    \item[(ii)] The probability measure $\nu\neq \delta_{y_0}$ for any $y_0\in \mathbb R$;
    \item[(iii)] $\int_{\mathbb R} x\mu(dx) = \int_{\mathbb R} y\nu(dy) = 0$.
\end{itemize}
\end{assumption}
We note that the second and the third conditions of Assumption \ref{assu:mu_nu} are assumptions without loss of generality, because if they are violated, a martingale transport between $\mu$ and $\nu$ either does not exist or exists only in a trivial form. Under Assumption \ref{assu:mu_nu}, we denote by $\mathcal X_0$ the bounded effective interval
\begin{equation*}
    \mathcal X_0 = \Big\{x\in \mathbb R,\ s.t.\ \mu([x,\infty))>0\ \textrm{and} \ \mu((-\infty,x])>0\Big\},
\end{equation*}
and similarly denote $\mathcal Y_0$. Then the outer bound of $\mathcal X_0$ is defined as $\overline{\mathcal X}=\sup_{x\in\mathcal X_0}x>0$ and $\underline{\mathcal X}=\inf_{x\in\mathcal X_0}x<0$, and the essential absolute bound of $\mu$ on $\mathcal X$ is denoted by $|\mathcal X|=\overline{\mathcal X}\vee(-\underline{\mathcal X})$. Similarly we define $\overline{\mathcal Y}$, $\underline{\mathcal Y}$ and $|\mathcal Y|$. 

\begin{assumption}
\label{assu:lipschitz_c_conti}
    For the potential function $c(x,y,z)$ of the reference probability measure $Q$, we assume that it is continuously differentiable and has essential bounds: 
    \begin{equation*}
    |\nabla c(x,y,z)|_{L^{\infty}(\mu\otimes \nu\otimes\rho)} \le L_c, \quad |c(x,y,z)|_{L^{\infty}(\mu\otimes \nu\otimes\rho)} \le |c|_{L^\infty},
    \end{equation*}
for some constants $L_c\ge0$ and $|c|_{L^\infty}\ge 0$.
\end{assumption}
Assumptions \ref{assu:mu_nu} (i) and \ref{assu:lipschitz_c_conti} will be used to derive the uniform $L^\infty$ bound for the dual coefficients, specifically, in Theorem \ref{thm:uniform_bound_conti}. The continuity of the function $c$ is used to deduce the continuity of the dual coefficients. We further assume that there exists a martingale optimal transport equivalent to $\mu\otimes\nu\otimes \rho$.

\begin{assumption}
\label{assu:exsitence_mg_transport_conti}
There exists $\bar \pi \in \mathcal M(\mu,\nu)$ such that $\bar \pi\sim \mu\otimes \nu\otimes \rho$, moreover, it takes the form $ d\bar \pi/d(\mu\otimes \nu\otimes \rho) = \exp(-p(x,y,z)) $, with a potential $p(x,y,z)$ bounded from both sides,  $\mu\otimes \nu\otimes \rho$-a.s., i.e., $|p|_{L^\infty(\mu\otimes \nu\otimes \rho)}<\infty$; 
\end{assumption}

Assumption~\ref{assu:exsitence_mg_transport_conti} is strictly stronger than the mere existence of a martingale transport with finite relative entropy $H(\pi\,|\,Q)<\infty$, because the finiteness of $H(\pi|Q)$ only yields  $\pi \ll \mu\otimes \nu\otimes \rho$ but not the boundedness of the log-density. As we will see, the presumed existence of such transport plan $\bar \pi$ plays a crucial role in our arguments to prove the quantitative convergence of Sinkhorn's algorithm, in particular, in order to prove Proposition \ref{prop:existence_p1_conti}. 

To introduce the dual problem of \eqref{eq:mg_transport}, we define the Lagrange function as
\begin{equation*}
\begin{aligned}
        \mathcal L(\pi,f,g, &h) = H(\pi |Q) + \int_{\mathcal X} f(x)\Big(\int_{\mathcal Y\times\mathcal Z} \pi(dx,dy,dz)- \mu(dx)\Big) \\
        & + \int_{\mathcal Y}g(y)\Big(\int_{\mathcal X\times\mathcal Z} \pi(dx,dy,dz) - \nu(dy)\Big) 
          +\int_{\mathcal X} h(x)  \Big(\int_{\mathcal Y\times\mathcal Z} (y-x)\pi(dx,dy,dz) \Big),
\end{aligned}
\end{equation*}
for any $f\in L^\infty(\mu)$, $g\in L^\infty(\nu)$, $h\in L^\infty(\mu)$ and any positive measure $\pi$.
We further define $\mathcal G(f,g,h)$ to be the infimum of $\mathcal L(\pi,f,g,h)$ over positive measures $\pi$ given $(f,g,h)$:
\begin{equation*}
    \mathcal G(f,g,h) := \inf_{\pi\in \mathcal M^+(\mathcal X\times \mathcal Y\times \mathcal Z)} \mathcal L (\pi,f,g,h).
\end{equation*}
 By direct computation, we have
\[\mathcal G(f,g,h)  = - Z(f,g,h) - \int_{\mathcal X} f(x)\mu(dx) - \int_{\mathcal Y} g(y) \nu(dy),\]
where $Z(f,g,h):=\pi(f,g,h)(\mathcal X\times \mathcal Y\times \mathcal Z)$ is the total mass of a positive measure $\pi(f,g,h)$ defined by:
\begin{equation}
\label{eq:definition_of_induced_prob}
\frac{d\pi(f,g,h)}{d(\mu\otimes \nu\otimes \rho)} = \exp\big(-c(x,y,z)-f(x)-g(y) - h(x) (y-x)\big). 
\end{equation}
The dual problem to \eqref{eq:mg_transport} reads
\begin{equation}
\label{eq:dual_problem}
    \sup_{f\in L^\infty(\mu),g\in L^\infty(\nu),h\in L^\infty(\mu)}\mathcal G(f,g,h).
\end{equation}
To solve the dual problem, we turn to Sinkhorn's algorithm in \eqref{eq:sinkhorn_alg_min_conti}. In the $n$-th iteration, we update $h^n \to \tilde {h}^{n+1}$ using the first-order condition for optimality:
\begin{equation}
\label{eq:h_equality_conitnuous}
    \tilde{h}^{n+1} := h
\end{equation}
such that
\begin{equation*}
    \int_{\mathcal Y\times\mathcal Z} (y-x)\exp(-c(x,y,z)- f^n(x)-{g}^{n}(y)-h(x)(y-x))(\nu\otimes\rho)(dy,dz)= 0,
\end{equation*}
and again by the first-order condition of optimality for $\tilde{f}^{n+1},\tilde{g}^{n+1}$ , functions $\tilde f^{n+1},\tilde g^{n+1}$ are updated as
\begin{equation}
\label{eq:update_formula_fg}
\begin{aligned}
     & \tilde{f}^{n+1}(x) := \log \Big(\int_{\mathcal Y\times\mathcal Z}\exp\big(-c(x,y,z) - g^{n}(y) - \tilde{h}^{n+1}(x)(y-x)\big)(\nu\otimes\rho)(dy,dz)\Big)  , \\ 
     & \tilde{g}^{n+1}(y) := \log \Big(\int_{\mathcal X\times\mathcal Z} \exp\big(-c(x,y,z) -  \tilde{f}^{n+1}(x) -  \tilde{h}^{n+1}(x)(y-x)\big)(\mu\otimes\rho)(dx,dz)\Big). 
\end{aligned}
\end{equation}
In particular, the existence and the measurability of ${\tilde h}^{n+1}$ will be ensured by Proposition \ref{prop:well_defined_fgh}. Finally we normalize $(\tilde{f}^{n+1},\tilde{g}^{n+1},\tilde{h}^{n+1}) $ such that
\begin{equation}
\label{eq:normalization_iter_n}
    (f^{n+1},g^{n+1},h^{n+1}) := \big(\tilde{f}^{n+1}+ \lambda_{\tilde{g}^{n+1}}-\lambda_{\tilde{h}^{n+1}} x,\tilde{g}^{n+1}-\lambda_{\tilde{g}^{n+1}}+\lambda_{\tilde{h}^{n+1}} y,\tilde h^{n+1}-\lambda_{\tilde{h}^{n+1}} \big),
\end{equation}
where $\lambda_{\tilde{g}^{n+1}} = \int_{\mathcal Y}\tilde g^{n+1}(y)\nu(dy)$, $\lambda_{\tilde{h}^{n+1}} = \int_{\mathcal X} \tilde h^{n+1}(x)\mu(dx)$ so that
\begin{equation}\label{eq:normalization}
    \int_{\mathcal Y} g^{n+1}(y)\nu(dy)=\int_{\mathcal X} h^{n+1}(x)\mu(dx)=0.
\end{equation}
\begin{remark}
    It is important to observe that in both the primal problem~\eqref{eq:mg_transport} and the dual problem~\eqref{eq:dual_problem}, only the values of the functions \( f \), \( g \), and \( h \) on the supports of \( \mu \), \( \nu \), and \( \mu \) respectively, are relevant. This stems from the fact that for the functional \( \mathcal{L}(\pi, f, g, h) \) to be well defined with finite relative entropy \( H(\pi | Q) < +\infty \), it is necessary that \( \pi \ll \mu \otimes \nu \otimes \rho \). Nevertheless, for analytical convenience—particularly to facilitate the discussion of continuity during the iterative updates—we define the updated triplet \( (\tilde{f}^{n+1}, \tilde{g}^{n+1}, \tilde{h}^{n+1}) \) over the entire domain rather than restricting to the supports, using the continuous formulations~\eqref{eq:h_equality_conitnuous} and~\eqref{eq:update_formula_fg}.
\end{remark}
\begin{remark}
    The values of  $\pi(f,g,h)$ in \eqref{eq:definition_of_induced_prob} and $\mathcal G(f,g,h)$ remain unchanged under the following two types of transforms
    \begin{equation}
    \label{eq:invariant_transform}
    \left\{\begin{array}{c}
        f \mapsto f+c_1 \\
         g\mapsto g- c_1   \\
    \end{array}\right.
    \text{\quad or\quad } 
    \left\{\begin{array}{c}
        f\mapsto f-c_2  x  \\
         g\mapsto g+c_2 y  \\
         h\mapsto h-c_2 \\
    \end{array}\right.,
\end{equation}
for any constant $c_1,c_2\in \mathbb R$. In particular, the normalization \eqref{eq:normalization_iter_n} does not affect the optimality and the subsequent Sinkhorn iteration steps. 
\end{remark}


\begin{assumption}
\label{assu:measuable_init_bdd}
     The initial input of Sinkhorn's algorithm $(f^0,g^0,h^0)$ is measurable and bounded in the sense that $f^0\in  L^\infty(\mu), g^0 \in L^\infty(\nu)$, $h^0\in L^\infty(\mu)$. 
\end{assumption}

\begin{proposition}
\label{prop:well_defined_fgh}
    Under Assumptions \ref{assu:mu_nu}, \ref{assu:lipschitz_c_conti} and \ref{assu:measuable_init_bdd},
    the iteration steps $(\tilde f^{n+1}, \tilde g^{n+1})_{n\ge 1}$ by Sinkhorn's iterations are well-defined on $\mathbb R$ in \eqref{eq:update_formula_fg}, and $(\tilde h^{n+1})_{n\ge1}$ are well defined on $\mathcal X_0$ in \eqref{eq:h_equality_conitnuous}.
    In particular, the normalized functions $(f^n,g^n,h^n)_{n\ge 1}$ in \eqref{eq:normalization_iter_n} are continuously differentiable and bounded, i.e., $f^n,h^n\in L^\infty(\mu)\cap  C^1(\mathcal X_0)$ and $g^n \in L^\infty(\nu)\cap C^1(\mathcal Y_0)$. Moreover, if  $g^n\in L^\infty(\nu)$, then there exists a constant $c_h:=c_h(|c|_{L^\infty},|g^n|_{L^\infty(\nu)},\nu)>0$ such that $|\tilde h^{n+1}|_{L^\infty(\mu)}\le c_h$.
\end{proposition}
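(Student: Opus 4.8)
The plan is to induct on $n$, carrying as inductive hypothesis only that $g^{n}\in L^{\infty}(\nu)$ (true for $n=0$ by Assumption~\ref{assu:measuable_init_bdd}). The useful remark is that $e^{-f^{n}(x)}$, a positive factor independent of $(y,z)$, cancels from \eqref{eq:h_equality_conitnuous}, so that the updated triple $(\tilde h^{n+1},\tilde f^{n+1},\tilde g^{n+1})$ given by \eqref{eq:h_equality_conitnuous}--\eqref{eq:update_formula_fg} depends on the $n$-th data only through $g^{n}$ (the coefficient $h^{n}$ never re-enters); in particular \eqref{eq:h_equality_conitnuous} is the scalar equation $\Psi_{n}(x,\cdot)=0$ in the unknown $h\in\mathbb R$, where
\[
\Psi_{n}(x,h):=\int_{\mathcal Y\times\mathcal Z}(y-x)\,\exp\!\big(-c(x,y,z)-g^{n}(y)-h(y-x)\big)\,(\nu\otimes\rho)(dy,dz).
\]

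I would first dispose of the $\tilde h$-step together with the a priori bound $|\tilde h^{n+1}|_{L^{\infty}(\mu)}\le c_{h}$. Fix $x\in\mathcal X_{0}$. Compactness of $\mathrm{supp}\,\nu$ (Assumption~\ref{assu:mu_nu}(i)), boundedness of $c$ (Assumption~\ref{assu:lipschitz_c_conti}) and $g^{n}\in L^{\infty}(\nu)$ make $\Psi_{n}(x,\cdot)$ finite; differentiation under the integral sign — legitimate because $c\in C^{1}$ with bounded gradient and the supports are compact — shows $\Psi_{n}$ is jointly $C^{1}$ in $(x,h)$, with $\partial_{h}\Psi_{n}=-\int(y-x)^{2}\exp(\cdots)\,(\nu\otimes\rho)<0$ strictly, since $\nu\neq\delta_{x}$ by Assumption~\ref{assu:mu_nu}(ii). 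Hence $\Psi_{n}(x,\cdot)$ is strictly decreasing; and since $\nu$ charges both $(-\infty,x)$ and $(x,\infty)$ for every $x\in\mathcal X_{0}$ (the delicate point, addressed below), the part supported on $\{y<x\}$ (resp. $\{y>x\}$) dominates as $h\to+\infty$ (resp. $h\to-\infty$), so $\Psi_{n}(x,h)\to\mp\infty$. The intermediate value theorem then yields a unique root $\tilde h^{n+1}(x)$, and the implicit function theorem (applicable as $\partial_{h}\Psi_{n}\neq 0$) promotes $x\mapsto\tilde h^{n+1}(x)$ to a $C^{1}$ map on the interval $\mathcal X_{0}$. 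For the bound I would evaluate $\Psi_{n}(x,\tilde h^{n+1}(x))=0$, split the integral at $y=x$, and — assuming WLOG $h:=\tilde h^{n+1}(x)>0$ — estimate the $\{y>x\}$-part from above through $te^{-ht}\le 1/(eh)$ and the $\{y<x\}$-part from below through a uniform lower bound $\nu\big((-\infty,x-\delta_{0}]\big)\ge\epsilon_{0}>0$ valid for all $x\in\mathcal X_{0}$, with $\delta_{0},\epsilon_{0}$ depending only on $\mu,\nu$; comparing and taking logarithms produces $h\le c_{h}(|c|_{L^{\infty}},|g^{n}|_{L^{\infty}(\nu)},\nu)$, the lower bound being symmetric.

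The rest is routine. Since $\mathcal X_{0}$ has full $\mu$-measure and $\tilde h^{n+1}$ is now bounded there, for $x\in\mathcal X_{0}$ the integrand in the first line of \eqref{eq:update_formula_fg} lies in $[e^{-M},e^{M}]$ with $M:=|c|_{L^{\infty}}+|g^{n}|_{L^{\infty}(\nu)}+c_{h}(|\mathcal X|+|\mathcal Y|)$, so $\tilde f^{n+1}\in[-M,M]$; one more differentiation under the integral sign (now also using $\tilde h^{n+1}\in C^{1}(\mathcal X_{0})$) gives $\tilde f^{n+1}\in C^{1}(\mathcal X_{0})$. The same argument applied to the second line of \eqref{eq:update_formula_fg}, whose integral is carried by $x\in\mathcal X_{0}$, shows $\tilde g^{n+1}$ is bounded, lies in $C^{1}(\mathcal Y_{0})$, and is in fact well defined for every $y\in\mathbb R$. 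Finally $\lambda_{g}=\int\tilde g^{n+1}\,d\nu$ and $\lambda_{h}=\int\tilde h^{n+1}\,d\mu$ are finite and the affine corrections in \eqref{eq:normalization_iter_n} are bounded and $C^{1}$ on the compact effective intervals, so $f^{n+1},h^{n+1}\in L^{\infty}(\mu)\cap C^{1}(\mathcal X_{0})$ and $g^{n+1}\in L^{\infty}(\nu)\cap C^{1}(\mathcal Y_{0})$; in particular $g^{n+1}\in L^{\infty}(\nu)$, which closes the induction.

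The main obstacle is precisely the assertion that $\nu$ charges both $(-\infty,x)$ and $(x,\infty)$ for every $x\in\mathcal X_{0}$, and uniformly so (the gap $\delta_{0}$) for the quantitative estimate. For $x$ in the open interior of $\mathrm{conv}(\mathrm{supp}\,\mu)$ this is immediate from the convex order $\mu\preceq_{c}\nu$, but at the atoms of $\mu$ sitting at the endpoints of $\mathcal X_{0}$ — which do belong to $\mathcal X_{0}$ — the convex order alone does not force $\mathrm{supp}(\nu)$ to extend strictly beyond $\mathcal X_{0}$ on both sides, and here one uses the existence of a martingale coupling equivalent to $\mu\otimes\nu\otimes\rho$ (Assumption~\ref{assu:exsitence_mg_transport_conti}), which rules out the degenerate configurations. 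Once this is in hand, everything else — differentiation under the integral sign, the first-order conditions \eqref{eq:h_equality_conitnuous}--\eqref{eq:update_formula_fg}, and the elementary inequality $te^{-ht}\le 1/(eh)$ — is standard.
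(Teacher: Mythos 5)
Your proposal is correct and takes essentially the same route as the paper's proof: strict monotonicity of $h\mapsto\Phi(x,h)$ with a sign change yields existence of the root defining $\tilde h^{n+1}$, the implicit function theorem gives $C^1$ regularity, an exponential-tilting comparison produces the uniform bound $c_h$, and the explicit update formulas then give boundedness and $C^1$ regularity of $\tilde f^{n+1}$ and $\tilde g^{n+1}$. The delicate point you flag — that $\nu$ must charge both sides of every $x\in\mathcal X_0$, uniformly — is handled in the paper exactly as you suggest, via Proposition \ref{prop:boundary_condition} (i.e., $\overline{\mathcal X}<\overline{\mathcal Y}$ and $\underline{\mathcal X}>\underline{\mathcal Y}$), which indeed rests on Assumption \ref{assu:exsitence_mg_transport_conti} even though that assumption is not listed among the hypotheses of Proposition \ref{prop:well_defined_fgh}.
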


The proof of Proposition \ref{prop:well_defined_fgh} can be found in Appendix \ref{sec:well_defined_fgh}. Our proof for the main convergence result for Sinkhorn's algorithm relies heavily on the following construction of a probability measure. 

\begin{proposition}
\label{prop:existence_p1_conti}
    Under Assumption \ref{assu:exsitence_mg_transport_conti}, there exists a transport $\hat \pi \in \Pi(\mu,\nu)$ and a constant $\epsilon_p>0$ such that 
    \begin{equation}
    \label{eq:pi1_property_semi_mg}
    \int_{\mathcal Y\times\mathcal Z} (y-x) d \hat \pi_{y,z|x} \left\{
    \begin{array}{l}
        > \ \ \epsilon_p, \text{\quad for\ any\ }  x\ge 0;   \\
        < -\epsilon_p,  \text{\quad for\ any\ } x< 0.
    \end{array}
    \right.
    \end{equation}
    Meanwhile, the quotient density $|\log d\hat \pi/d(\mu\otimes \nu\otimes\rho)|$ is bounded and the relative entropy to the reference measure satisfies $H(\hat \pi|Q)<+\infty$.
\end{proposition}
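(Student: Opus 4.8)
The plan is to obtain $\hat\pi$ as a small rank‑one perturbation of the reference martingale transport $\bar\pi$ supplied by Assumption~\ref{assu:exsitence_mg_transport_conti}. The point of starting from $\bar\pi$ is twofold: its martingale property forces the unperturbed contribution to $\int_{\mathcal Y\times\mathcal Z}(y-x)\,\pi_{y,z|x}(dy,dz)$ to vanish identically, so only the perturbation survives in the drift; and the two‑sided bound $e^{-\bar p}\le d\bar\pi/d(\mu\otimes\nu\otimes\rho)\le e^{\bar p}$, with $\bar p:=|p|_{L^\infty(\mu\otimes\nu\otimes\rho)}$, leaves room to add a bounded correction while keeping the new density strictly positive and log‑bounded.

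Concretely, I would fix $\delta>0$ with $\delta\,|\mathcal Y|<\tfrac12 e^{-\bar p}$ and set
\[
\frac{d\hat\pi}{d(\mu\otimes\nu\otimes\rho)}(x,y,z):=\exp\big(-p(x,y,z)\big)+\delta\,\big(\mathbbm{1}_{\{x\ge0\}}-\mu([0,\infty))\big)\,y .
\]
The first verification is that $\hat\pi\in\Pi(\mu,\nu)$: the right‑hand side lies in $[\tfrac12 e^{-\bar p},\,e^{\bar p}+\tfrac12 e^{-\bar p}]$ since $|(\mathbbm{1}_{\{x\ge0\}}-\mu([0,\infty)))\,y|\le|\mathcal Y|$ for $\nu$‑a.e.\ $y$, so $\hat\pi$ is a nonnegative measure with $|\log(d\hat\pi/d(\mu\otimes\nu\otimes\rho))|$ bounded; its total mass equals $1$ and its $\mathcal X$‑ and $\mathcal Y$‑marginals equal $\mu$ and $\nu$, because those of $\bar\pi$ do and the perturbation integrates to $0$ against $\nu(dy)$ (using $\int y\,\nu(dy)=0$, Assumption~\ref{assu:mu_nu}(iii)) and against $\mu(dx)$ (by the choice of the centering constant $\mu([0,\infty))$). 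Boundedness of the log‑density together with $|c|_{L^\infty}<\infty$ (Assumption~\ref{assu:lipschitz_c_conti}) then gives $H(\hat\pi|Q)=\int\big(\log\tfrac{d\hat\pi}{d(\mu\otimes\nu\otimes\rho)}+c\big)\,d\hat\pi<\infty$.

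Next I would evaluate the conditional drift. Since the $\mathcal X$‑marginal of $\hat\pi$ is $\mu$, for $\mu$‑a.e.\ $x$ we have $\hat\pi_{y,z|x}(dy,dz)=\tfrac{d\hat\pi}{d(\mu\otimes\nu\otimes\rho)}(x,y,z)\,\nu(dy)\rho(dz)$, hence
\[
\int_{\mathcal Y\times\mathcal Z}(y-x)\,\hat\pi_{y,z|x}(dy,dz)=\int_{\mathcal Y\times\mathcal Z}(y-x)\,\bar\pi_{y,z|x}(dy,dz)+\delta\big(\mathbbm{1}_{\{x\ge0\}}-\mu([0,\infty))\big)\int_{\mathcal Y}(y-x)\,y\,\nu(dy).
\]
The first term vanishes by the martingale property of $\bar\pi$ (i.e.\ $\int_{\mathcal Y\times\mathcal Z}(y-x)\,\bar\pi_{y,z|x}(dy,dz)=0$ for $\mu$‑a.e.\ $x$), and the last integral equals $\int y^2\,\nu(dy)=:\sigma_\nu^2$ because $\int y\,\nu(dy)=0$. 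Thus the drift equals $\delta\,\sigma_\nu^2\,\mu((-\infty,0))>0$ for $x\ge0$ and $-\delta\,\sigma_\nu^2\,\mu([0,\infty))<0$ for $x<0$, where $\sigma_\nu^2>0$ since $\nu$ has zero mean and is not a Dirac mass (Assumption~\ref{assu:mu_nu}(ii)), and $\mu([0,\infty)),\mu((-\infty,0))\in(0,1)$ because $\overline{\mathcal X}>0>\underline{\mathcal X}$. Taking $\epsilon_p:=\tfrac12\,\delta\,\sigma_\nu^2\,\min\{\mu([0,\infty)),\mu((-\infty,0))\}>0$ yields \eqref{eq:pi1_property_semi_mg}, completing the argument.

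I expect the only genuinely non‑routine step to be choosing the perturbation: one needs a correction that is simultaneously of a definite sign on each of $\{x\ge0\}$ and $\{x<0\}$, marginal‑preserving, and bounded. The product correction $\phi(x)\,\psi(y)$ with $\psi(y)=y$ and $\phi$ the $\mu$‑centered indicator of $\{x\ge0\}$ achieves all three at once, since $\psi$ is $\nu$‑orthogonal to constants by the zero‑mean assumption, $\phi$ is $\mu$‑orthogonal to constants by construction, and their product is bounded on the compact supports. After that the verification is bookkeeping; the one real constraint is that $\delta$ be small enough for the density to stay positive, which is exactly where the two‑sided bound of Assumption~\ref{assu:exsitence_mg_transport_conti} enters.
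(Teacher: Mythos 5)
Your proof is correct, but it takes a different route from the paper. The paper perturbs $\bar\pi$ by a piecewise\emph{-constant} mass transfer on four regions determined by $\{x\ge 0\}$ versus $\{x<0\}$ and two $\nu$-tail events $\{y\ge Y^u\}$, $\{y\le Y^d\}$, with weights chosen to balance the marginals; the drift is then only estimated from below by $\delta\,(Y^u-Y^d)$ times suitable masses. You instead use a rank-one perturbation $\delta\,\bigl(\mathbbm{1}_{\{x\ge0\}}-\mu([0,\infty))\bigr)\,y$, which yields the conditional drift \emph{exactly}, namely $\delta\,\mathrm{Var}(\nu)\,\bigl(\mathbbm{1}_{\{x\ge0\}}-\mu([0,\infty))\bigr)$, and hence an explicit $\epsilon_p$; this is cleaner and shorter. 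The trade-off is that your construction leans on Assumption \ref{assu:mu_nu}: the zero-mean normalization of $\nu$ is what makes the perturbation marginal-preserving and makes $\int (y-x)y\,d\nu=\mathrm{Var}(\nu)$, and the compact support of $\nu$ (finiteness of $|\mathcal Y|$) is what keeps the perturbed density positive and log-bounded, whereas the paper's region-based perturbation is bounded by construction and preserves marginals by mass balancing alone. Two shared caveats, not specific to you: both arguments need $\mu([0,\infty))>0$ and $\mu((-\infty,0))>0$ (implicit in the paper's assertion $\overline{\mathcal X}>0>\underline{\mathcal X}$), and both really establish \eqref{eq:pi1_property_semi_mg} for $\mu$-a.e.\ $x$, since the martingale property of $\bar\pi$ and the conditional kernels are only defined $\mu$-a.s.
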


\begin{remark}
\label{rmk:relation_nutz}
Proposition~\ref{prop:existence_p1_conti} is closely connected to Lemma~3.2 in \cite{nutz2024martingale}, which proves the existence of a martingale measure $\hat{\pi}$ satisfying \eqref{eq:pi1_property_semi_mg} and  $H(\hat{\pi}\,|\,Q)<+\infty$. The main difference lies in the additional boundedness condition required in Assumption~\ref{assu:exsitence_mg_transport_conti}, which assumes the quotient density $\log (d\bar{\pi}/d(\mu\otimes\nu\otimes\rho))$ to be bounded. While this condition is not required for the existence result of \cite{nutz2024martingale}, it is needed to construct a coupling $\hat \pi$ such that  $\log (d\hat{\pi}/d(\mu\otimes\nu\otimes\rho))$ is bounded, which plays a crucial role in our quantitative analysis on the Sinkhorn iteration, ensuring uniform control of the dual potentials in Theorem~\ref{thm:uniform_bound_conti}. 
\end{remark}

The Proposition \ref{prop:existence_p1_conti} can be proved by perturbing the martingale transport $\bar \pi$ whose existence is assumed in Assumption \ref{assu:exsitence_mg_transport_conti}. Note that the choice of $\bar \pi$ will affect the value of $\epsilon_p$, which further affects the bound for the convergence rate of Sinkhorn's algorithm. A more precise bound for convergence rate should traverse all possible choices of $\bar \pi \in \mathcal M(\mu,\nu)$, and the corresponding $\hat \pi$. Now we present the convergence results of Sinkhorn's algorithm.

\begin{theorem}
\label{thm:uniform_bound_conti}
    Under Assumptions \ref{assu:mu_nu}, \ref{assu:lipschitz_c_conti}, \ref{assu:exsitence_mg_transport_conti} and \ref{assu:measuable_init_bdd}, for dual coefficients triples $(f^n,g^n,h^n)_{n\ge0}$ updated according to Sinkhorn's algorithm, \eqref{eq:h_equality_conitnuous} and \eqref{eq:update_formula_fg}, we can find some constants $C_f,C_g,C_h >0$ such that 
    \begin{equation*}
    \begin{aligned}
                \sup_{n\ge 1} |h^n(x)|_{L^\infty(\mu)} \le C_h, \quad       \sup_{n\ge 1} |f^n(x)|_{L^\infty(\mu)} \le C_f,\quad \text{and} \quad  \sup_{n\ge 1} |g^{n}(y)|_{L^\infty( \nu)} \le C_g,
    \end{aligned}
    \end{equation*}
    where these constants are dependent on $|\mathcal X|, |\mathcal Y|, |c|_{L^\infty}, |p|_{L^\infty}, \epsilon_p$.
\end{theorem}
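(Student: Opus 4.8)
The plan is to obtain the bounds in the order $h^n \to g^n \to f^n$, exploiting the normalization \eqref{eq:normalization} at each stage, and to extract the key a priori estimate from a comparison between the Sinkhorn iterate $\pi(f^n,g^{n-1},\tilde h^n)$ (or the relevant intermediate measure) and the perturbed martingale transport $\hat\pi$ furnished by Proposition \ref{prop:existence_p1_conti}. The heuristic is that Sinkhorn's coordinate-descent monotonically decreases $\mathcal G$ (equivalently, the Lagrangian value), so all iterates keep $\mathcal G(f^n,g^n,h^n)$ bounded below by $\mathcal G(f^0,g^0,h^0)$; pairing this with the fact that $\hat\pi$ has bounded quotient density and strictly signed (by $\pm\epsilon_p$) conditional barycenter should pin down $h^n$. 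Concretely, I would test the stationarity relation \eqref{eq:h_equality_conitnuous} defining $\tilde h^{n+1}$ against $\hat\pi$: writing the defining identity $\int (y-x)\exp(-c-f^n-g^n-\tilde h^{n+1}(x)(y-x))\,d(\nu\otimes\rho)=0$ and comparing with the sign condition \eqref{eq:pi1_property_semi_mg}, one sees that if $\tilde h^{n+1}(x)$ were very large and positive at some $x\ge 0$, the exponential weight $\exp(-\tilde h^{n+1}(x)(y-x))$ would over-penalize the region $y>x$, forcing the integral to be negative — contradiction once the contribution of the bounded factors $\exp(-c-f^n-g^n)$ is controlled via the density bound on $\hat\pi$. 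This yields $|\tilde h^{n+1}|_{L^\infty(\mu)}\le C_h$ uniformly; after normalization $h^{n+1}=\tilde h^{n+1}-\lambda_h$ with $|\lambda_h|\le |\tilde h^{n+1}|_{L^\infty(\mu)}$, the same bound (up to a factor 2) passes to $h^{n+1}$.

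Granted the bound on $h^n$, I would next bound $g^n$. From the update \eqref{eq:update_formula_fg}, $\tilde g^{n+1}(y)$ is the log of an integral of $\exp(-c-\tilde f^{n+1}(x)-\tilde h^{n+1}(x)(y-x))$ against $\mu\otimes\rho$; since $|c|_{L^\infty},|\tilde h^{n+1}|_{L^\infty}$ and $|\mathcal X|,|\mathcal Y|$ are all controlled, the only unbounded ingredient is $\tilde f^{n+1}$, and dually $\tilde f^{n+1}(x)$ is controlled in terms of $g^n$. The clean way to close this is to use the two marginal constraints that the normalized iterates enforce, $\pi(f^{n},g^{n},h^{n})$ having $\mathcal X$-marginal $\mu$ and $\mathcal Y$-marginal $\nu$ after the $f,g$-updates, together with the normalization $\int g^{n}\,d\nu=0$: a zero-mean bounded-oscillation function is bounded once its oscillation is bounded, and the oscillation of $\tilde g^{n+1}$ is bounded by $2(|c|_{L^\infty}+C_h(|\mathcal X|+|\mathcal Y|))$ plus the oscillation of $\tilde f^{n+1}$, which in turn is controlled the same way by the oscillation of $g^n$ — but crucially the $h$-term's oscillation no longer feeds back, so one gets a closed linear recursion on oscillations with a fixed-point bound, or more simply a direct one-shot estimate using that after the $g$-update the $\mathcal Y$-marginal is exactly $\nu$. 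This gives $\sup_n |g^n|_{L^\infty(\nu)}\le C_g$.

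Finally, with $h^n$ and $g^n$ bounded, the formula for $\tilde f^{n+1}(x)$ in \eqref{eq:update_formula_fg} directly yields $|\tilde f^{n+1}(x)|\le |c|_{L^\infty}+C_g+C_h(|\mathcal X|+|\mathcal Y|)+|\log(\nu\otimes\rho)(\mathcal Y\times\mathcal Z)|$, hence a uniform bound on $\tilde f^{n+1}$, and after normalization $f^{n+1}=\tilde f^{n+1}+\lambda_g-\lambda_h x$ with $|\lambda_g|,|\lambda_h|$ already controlled and $|x|\le|\mathcal X|$ on $\mathcal X_0$, one obtains $\sup_n|f^n|_{L^\infty(\mu)}\le C_f$. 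The main obstacle is the first step: extracting a quantitative, iteration-uniform bound on $\tilde h^{n+1}$ from the implicit stationarity equation \eqref{eq:h_equality_conitnuous}. Proposition \ref{prop:well_defined_fgh} already gives a bound $c_h$ depending on $|g^n|_{L^\infty(\nu)}$, so naively there is a circular dependence $h^n \leftrightarrow g^n$; breaking it requires either (a) running the $g$-bound argument first in a way that does not invoke $h$ (using the marginal constraint and zero-mean normalization as above), or (b) testing the $h$-equation against $\hat\pi$ to get a bound on $\tilde h^{n+1}$ that depends only on the density bound of $\hat\pi$, $\epsilon_p$, $|c|_{L^\infty}$ and $|\mathcal X|,|\mathcal Y|$ — and \emph{not} on $g^n$ — which is exactly why Assumption \ref{assu:exsitence_mg_transport_conti} and Proposition \ref{prop:existence_p1_conti} are needed. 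I expect route (b) to be the intended one, with the sign separation $\pm\epsilon_p$ doing the essential work of converting "the conditional mean vanishes" into "$h$ cannot be too large."
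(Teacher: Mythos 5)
There is a genuine gap, and it sits exactly at the point you flag as "the main obstacle": neither of your two routes breaks the circular dependence between $h^n$ and $g^n$. Route (b) cannot work as stated, because of the invariance \eqref{eq:invariant_transform}: since $\int_{\mathcal Y} y\,\nu(dy)=0$ (Assumption \ref{assu:mu_nu}), the normalization $\int g^n d\nu=0$ does not exclude a large component $c_2\,y$ in $g^n$, and replacing $(g,h)$ by $(g+c_2 y,\,h-c_2)$ leaves the stationarity equation \eqref{eq:h_equality_conitnuous} invariant while shifting its solution $\tilde h^{n+1}$ by $-c_2$. Hence no bound on $\tilde h^{n+1}$ itself that is independent of $g^n$ can exist; your heuristic ("if $\tilde h^{n+1}(x)$ were very large the integral would be negative") only reproduces the bound $c_h(|c|_{L^\infty},|g^n|_{L^\infty(\nu)},\nu)$ of Proposition \ref{prop:well_defined_fgh}, because the unbounded factor $e^{-g^n(y)}$ sits inside the very integral you are testing, and the sign condition \eqref{eq:pi1_property_semi_mg} of $\hat\pi$ cannot remove it from a pointwise identity whose value is exactly zero by construction. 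Route (a) is circular in the other direction: the $x$-oscillation of $\tilde f^{n+1}$ (and hence of $\tilde g^{n+1}$) picks up the term $\tilde h^{n+1}(x)(y-x)$, so your oscillation bound already contains $C_h$, i.e.\ the bound you were trying to avoid; and even granting it, the resulting recursion on oscillations has coefficient one, so the claimed "fixed-point bound" does not follow.

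What actually closes the loop in the paper is to bound gauge-invariant combinations rather than the potentials themselves: $\tilde h^{n}-\tilde h^{n}(0)$, $\tilde f^{n}-\tilde f^{n}(0)-\tilde h^{n}(0)x$ and $\tilde g^{n}+\tilde f^{n}(0)+\tilde h^{n}(0)y$, which are unchanged under both transforms in \eqref{eq:invariant_transform}; the normalization \eqref{eq:normalization_iter_n} then transfers these bounds to $(f^n,g^n,h^n)$ because the $\tilde f^{n}(0)$, $\tilde h^{n}(0)$ terms cancel. The quantitative input is twofold: (i) a one-sided Lipschitz estimate $(\tilde h^{n+1}(x')-\tilde h^{n+1}(x))\,\mathrm{sgn}(x'-x)\le 2L_c$, obtained by comparing the first-order conditions at two points $x,x'$ and using the martingale property of the conditional kernel, Assumption \ref{assu:lipschitz_c_conti} being essential here; and (ii) the global inequality $\mathcal G(\tilde f^{n+1},g^n,\tilde h^{n+1})\ge \mathcal G(f^0,g^0,h^0)$ (Sinkhorn is coordinate \emph{ascent} for the maximization \eqref{eq:dual_problem}; your "monotonically decreases" is a slip of sign) combined with weak duality tested against $\hat\pi$, whose conditional barycenter is signed by $\pm\epsilon_p$ (Proposition \ref{prop:existence_p1_conti}); this is where $\epsilon_p$ enters, at the level of an integrated inequality, not the pointwise equation. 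This first yields only $L^1(\mu)$ and $L^1(\nu)$ bounds (Proposition \ref{prop:l1_bound_conti}), with the $g$-combination handled by a further comparison against $\bar\pi=e^{-p}\,d(\mu\otimes\nu\otimes\rho)$ from Assumption \ref{assu:exsitence_mg_transport_conti} (whence the $|p|_{L^\infty}$ dependence); a second comparison with $\hat\pi$, together with (i) and the pointwise upper bound $\tilde f^{n+1}(x)-\tilde f^{n+1}(0)-\tilde h^{n+1}(0)x\le L_c|x|$, upgrades these to the $L^\infty$ bounds of the theorem. Your proposal has the right raw ingredients (monotonicity of $\mathcal G$, the perturbed transport $\hat\pi$, the sign separation $\epsilon_p$) but not the mechanism that makes them usable, namely the passage to shift-invariant quantities, the two-point Lipschitz estimate, and the $L^1$-to-$L^\infty$ bootstrap.
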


The proof of Proposition \ref{prop:existence_p1_conti} and Theorem \ref{thm:uniform_bound_conti} can be found in Section \ref{proof:uniform_bound}. Subsequently, we observe that the induced probability satisfies $\sup_{n\in \mathbb N^+}|\log d\pi(f^n,g^n,h^n)/d(\mu\otimes\nu\otimes\rho)|< \infty$. And this fact enables us to show the exponential convergence of Sinkhorn's algorithm. 

\begin{theorem}
\label{thm:existence_minimizer_linear_convergence} Under Assumptions \ref{assu:lipschitz_c_conti}, \ref{assu:exsitence_mg_transport_conti} and \ref{assu:measuable_init_bdd}, we have
\begin{enumerate}
    \item[{\rm (1)}] Sinkhorn's algorithm for the dual problem, \eqref{eq:h_equality_conitnuous} and \eqref{eq:update_formula_fg}, converges exponentially in the following sense:
    \begin{equation}
    \label{eq:thm_exp_cvgz}
        \mathcal G^{*} - \mathcal{G}(f^{n},g^{n},h^{n}) \le \big( 1-\frac{1}{C_s+1} \big)^n \big(\mathcal G^{*} - \mathcal{G}(f^{0},g^{0},h^{0})\big),
    \end{equation}
    for some $C_s >0$, depending on the parameters $|\mathcal X|_{\infty},|\mathcal Y|_{\infty}, C_f,C_g,C_h$ (see its detailed definition in \eqref{eq:C_s}).
    And where $\mathcal G^{*} = \sup_{f\in L^\infty(\mu),g\in L^\infty(\nu),h\in L^\infty(\mu)}\mathcal{G}(f,g,h)$;

    \item[{\rm (2)}] There exist a triple $(f^*,g^*,h^*)$ such that 
    $f^n\to f^*$ in $L^\infty(\mu)$, $g^n\to g^*$ in $L^\infty(\nu)$ and $h^n\to h^*$ in $L^\infty(\mu)$, which attains the maximum of the dual problem
    \eqref{eq:dual_problem}, that is,
    \begin{equation}
    \label{eq:Gstar-Gfuncstar}
     \mathcal G(f^*,g^*,h^*) =\mathcal G^{*}.
    \end{equation}
    The optimal dual coefficients triple $(f^*,g^*,h^*)$ is the unique maximizer of $\mathcal G$ that satisfies the normalization conditions $\int_{\mathcal Y}gd\nu = \int_{\mathcal X}hd\mu = 0$;
    
    \item[{\rm (3)}]  The dual functions $f^n,g^n,h^n$
    converge exponentially to $f^*,g^*,h^*$ in $L^2$ norm, i.e., 
    \begin{equation}
    \label{eq:L2convergence_formula}
        |f^n - f^*|_{L^\infty(\mu)}^2 + |g^n - g^*|_{L^\infty(\nu)}^2 + |h^n - h^*|_{L^2(\mu)}^2 \le C_0 \big( 1-\frac{1}{C_s+1} \big)^n,
    \end{equation}
    for some constant $C_0 \ge 0$, where the constant $C_s$ is the same as in \eqref{eq:thm_exp_cvgz}.
    
    \item[{\rm (4)}] The probability measure induced by the limit triple $(f^*,g^*,h^*)$:
    \begin{equation*}
        \frac{d\pi^*}{d(\mu\otimes \nu\otimes\rho)} = \exp\big(-c(x,y,z) - f^*(x) - g^*(y) - h^*(x)(y-x)\big),
    \end{equation*}
    is the unique minimizer $\pi^* \in \mathcal M(\mu,\nu)$ to the primal problem \eqref{eq:mg_transport}, i.e.,
    \begin{equation*}
        \inf_{\pi \in \mathcal M(\mu,\nu)} H(\pi |Q) = \mathcal G^{*}.
    \end{equation*}
\end{enumerate}
\end{theorem}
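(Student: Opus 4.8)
The entire argument is powered by the uniform bound of Theorem~\ref{thm:uniform_bound_conti}: once the normalized iterates $(f^n,g^n,h^n)$ are confined to a fixed $L^\infty$-ball and, by Proposition~\ref{prop:well_defined_fgh}, are uniformly $C^1$ on $\mathcal X_0$ and $\mathcal Y_0$, the induced densities $d\pi(f^n,g^n,h^n)/d(\mu\otimes\nu\otimes\rho)$ are pinched between two positive constants and the concave functional $\mathcal G$ is \emph{uniformly strongly concave} along the region visited by the algorithm. The target of part~(1) is a one-cycle contraction of the dual gap: a constant $\kappa=\kappa(|\mathcal X|,|\mathcal Y|,C_f,C_g,C_h,\epsilon_p)\in(0,1)$ with
\begin{equation*}
\mathcal G^{*}-\mathcal G(f^{n+1},g^{n+1},h^{n+1})\ \le\ (1-\kappa)\,\big(\mathcal G^{*}-\mathcal G(f^{n},g^{n},h^{n})\big),
\end{equation*}
so that \eqref{eq:thm_exp_cvgz} holds with $\rho_s=-\log(1-\kappa)$. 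I would obtain this from the Bregman interpretation of the gap: $\mathcal G^{*}-\mathcal G(f,g,h)$ is a relative-entropy-type discrepancy between the (still-to-be-constructed) primal optimum and $\pi(f,g,h)$, insensitive to the gauge \eqref{eq:invariant_transform} that the normalization \eqref{eq:normalization} removes; the three Sinkhorn sub-steps \eqref{eq:h_equality_conitnuous}--\eqref{eq:update_formula_fg} are, by their first-order characterizations, successive $I$-projections of the current Gibbs kernel onto three affine constraint sets --- the martingale-kernel set, the $\mu$-marginal set and the $\nu$-marginal set --- whose intersection is $\mathcal M(\mu,\nu)$ and which all contain the optimum. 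A Pythagorean identity for $I$-projections then telescopes the one-cycle decrease of the gap into the sum of the three projection ``energies'', each bounded below by a uniform constant times the squared size of the corresponding constraint violation of $\pi(f^n,g^n,h^n)$; a complementary error bound, again from the uniform pinching, controls the gap from above by these same violations, and combining the two gives $\kappa>0$.

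\textbf{The main obstacle: the $h$-coordinate.} This is the step where the martingale problem departs from classical entropic Sinkhorn (cf.\ \cite{carlier2022linear,nutz2024martingale}): the $h$-update \eqref{eq:h_equality_conitnuous} is not a log-sum-exp marginal rescaling but an implicit root of an integral identity with no marginal interpretation, so the usual Hilbert-metric contraction does not cover it. The device that rescues the argument is Proposition~\ref{prop:existence_p1_conti}: the transport $\hat\pi\in\Pi(\mu,\nu)$ with strictly signed conditional barycenters \eqref{eq:pi1_property_semi_mg} and bounded log-density is a genuinely feasible competitor with quantitative margin $\epsilon_p$, and testing $\mathcal G$ (and its behaviour along the $h$-direction) against $\hat\pi$ converts the current martingale violation $x\mapsto\int(y-x)\,d\pi(f^n,g^n,h^n)_{y,z|x}$ into a controlled gain in $\mathcal G$ after the $h$-step, the bound $c_h$ of Proposition~\ref{prop:well_defined_fgh} making this gain estimate uniform in $n$. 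I expect the quantitative lower bound on this $h$-projection energy to be the main technical difficulty.

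\textbf{Parts (2) and (3).} Part~(1) yields $\mathcal G(f^n,g^n,h^n)\uparrow\mathcal G^{*}$ at rate $\rho_s$. For part~(2), the normalized iterates are uniformly bounded and uniformly $C^1$ on $\mathcal X_0,\mathcal Y_0$, hence pre-compact in $C^0$ by the Arzel\`a--Ascoli theorem; any subsequential limit $(f^*,g^*,h^*)$ maximizes $\mathcal G$ and satisfies $\int g^*\,d\nu=\int h^*\,d\mu=0$, and the uniform strong concavity of $\mathcal G$ modulo the gauge \eqref{eq:invariant_transform}, together with this normalization, forces such a maximizer to be unique; consequently the whole sequence converges, in $L^\infty(\mu)$ and $L^\infty(\nu)$ respectively. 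For part~(3), the same strong-concavity inequality furnishes a quadratic error bound $\mathcal G^{*}-\mathcal G(f,g,h)\gtrsim |f-f^*|_{L^2(\mu)}^2+|g-g^*|_{L^2(\nu)}^2+|h-h^*|_{L^2(\mu)}^2$ on the bounded set, which with part~(1) gives the exponential $L^2$ rate \eqref{eq:L2convergence_formula}; the $L^\infty$ sharpening for $f$ and $g$ is then obtained by estimating $f^n-f^*$ and $g^n-g^*$ pointwise from the update formulas \eqref{eq:update_formula_fg}, which are Lipschitz from $L^\infty$ to $L^\infty$ in their arguments, propagating the $L^\infty$ control of the $g$- and $h$-iterates through the bounded smooth exponential kernels.

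\textbf{Part (4): no duality gap.} Put $\pi^{*}=\pi(f^*,g^*,h^*)$ as in \eqref{eq:definition_of_induced_prob}. Because $(f^*,g^*,h^*)$ attains $\mathcal G^{*}$, its first-order conditions hold in the limit: stationarity in $f$ gives $\pi^{*}$ the $\mathcal X$-marginal $\mu$, stationarity in $g$ the $\mathcal Y$-marginal $\nu$, and stationarity in $h$ (the limit of \eqref{eq:h_equality_conitnuous}) gives $\int(y-x)\,d\pi^{*}_{y,z|x}=0$ $\mu$-a.s.; hence $\pi^{*}\in\mathcal M(\mu,\nu)$ and in particular $Z(f^*,g^*,h^*)=1$. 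Weak duality is read off the Lagrangian: for any feasible $\pi\in\mathcal M(\mu,\nu)$ the constraint terms of $\mathcal L(\pi,f,g,h)$ vanish, so $H(\pi|Q)=\mathcal L(\pi,f,g,h)\ge\inf_{\pi'\in\mathcal M^{+}}\mathcal L(\pi',f,g,h)=\mathcal G(f,g,h)$, whence $\inf_{\mathcal M(\mu,\nu)}H(\cdot|Q)\ge\mathcal G^{*}$. For the reverse, $\pi^{*}$ is simultaneously feasible and the $\mathcal M^{+}$-minimizer of $\mathcal L(\cdot,f^*,g^*,h^*)$ (it is precisely the Gibbs kernel \eqref{eq:definition_of_induced_prob}), so $H(\pi^{*}|Q)=\mathcal L(\pi^{*},f^*,g^*,h^*)=\mathcal G(f^*,g^*,h^*)=\mathcal G^{*}$; this proves both the absence of a primal--dual gap and that $\pi^{*}$ solves \eqref{eq:mg_transport}, while uniqueness of the primal minimizer follows from strict convexity of $\pi\mapsto H(\pi|Q)$ on the convex set $\mathcal M(\mu,\nu)$.
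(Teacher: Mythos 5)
There is a genuine gap, and it sits exactly where the theorem is won or lost: the quantitative content of part (1). Your architecture (per-cycle sufficient ascent plus an error bound controlling the dual gap by the constraint violations) is in substance the same Polyak--Łojasiewicz-type scheme the paper uses, but both ingredients are asserted rather than proved, and you explicitly defer the lower bound on the $h$-step gain as "the main technical difficulty". Moreover the tool you propose for that step is misplaced: the competitor $\hat\pi$ and the margin $\epsilon_p$ of Proposition~\ref{prop:existence_p1_conti} are what yield the uniform bounds $C_f,C_g,C_h$ of Theorem~\ref{thm:uniform_bound_conti}; once those bounds are in hand, the $h$-step gain is a consequence of the two-sided density bound \eqref{eq:density_exp_estimate}, the concavity identity of Lemma~\ref{lem:strong_convave_G}, and the elementary estimate $\int_{\mathcal X\times\mathcal Y}|(\tilde h^{n+1}-h^n)(y-x)|^2\,d\mu\otimes\nu\ \ge\ \mathrm{Var}(\nu)\,|\tilde h^{n+1}-h^n|^2_{L^2(\mu)}$ (using $\int y\,d\nu=0$); $\epsilon_p$ does not reappear in the rate. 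Likewise your "complementary error bound" (gap $\lesssim$ squared violations) does not follow from uniform pinching alone: the Hessian-type quadratic form $\int|f'+g'+h'(x)(y-x)|^2 d\mu\otimes\nu$ is degenerate along the gauge directions \eqref{eq:invariant_transform}, and decoupling it into $m_f|f'|^2_{L^2(\mu)}+m_g|g'|^2_{L^2(\nu)}+m_h|h'|^2_{L^2(\mu)}$ requires the normalizations $\int g'\,d\nu=\int h'\,d\mu=0$, $\mathrm{Var}(\nu)>0$ and a completion of squares (the paper's Lemma~\ref{lem:strong_convave_G2}), combined with the Lipschitz bounds on the functional derivatives (Lemma~\ref{lem:linear_func_upper}) and the vanishing of the partial derivatives at the three sub-step optimizers. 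Without these estimates the $I$-projection/Pythagorean telescoping remains heuristic; note also that the iterates $\pi(f^n,g^n,h^n)$ are unnormalized positive measures, so identifying the dual gap with a relative entropy to a (not yet constructed) primal optimum needs justification rather than invocation.

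Two further points. Equicontinuity for Arzelà--Ascoli is not supplied by Proposition~\ref{prop:well_defined_fgh}, which gives $C^1$ regularity for each fixed $n$; you need derivative bounds uniform in $n$ (the paper's Lemma~\ref{lem:uniform_bdd_deri}, obtained by differentiating the implicit equation defining $\tilde h^{n+1}$ and bounding the conditional second moment below by $\mathrm{Var}(\nu)$ times a density constant). And in part (3), the claimed $L^\infty$ sharpening for $f$ is not a routine "Lipschitz from $L^\infty$ to $L^\infty$" propagation: in \eqref{eq:update_formula_fg} the value $\tilde f^{n+1}(x)$ depends on $\tilde h^{n+1}(x)$ at the same point, so a naive pointwise estimate drags in $|h^n-h^*|_{L^\infty(\mu)}$, which nothing in the argument controls at an exponential rate; one must exploit the martingale property of both conditional kernels (of the iterate and of the limit) under Jensen's inequality to cancel the $h(x)(y-x)$ term and obtain $|\tilde f^{n+1}-f^*|_{L^\infty(\mu)}\le|g^n-g^*|_{L^\infty(\nu)}$ up to normalization shifts, while the $g$-estimate only needs $L^1(\mu)$ control of the $f$- and $h$-differences. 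Part (4) is sound and essentially the paper's argument (limiting first-order conditions give $\pi^*\in\mathcal M(\mu,\nu)$, weak duality from the Lagrangian, strict convexity of $H(\cdot|Q)$ for uniqueness), phrased via the saddle point rather than the paper's convexity inequality.
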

\begin{remark}
    Theorem~\ref{thm:existence_minimizer_linear_convergence} provides a lower bound on the convergence rate of the proposed Sinkhorn algorithm. However, this bound is not given in an explicit form, and depends on several problem-dependent constants whose behavior is not fully characterized. In Section~\ref{sec:applications}, we conduct numerical experiments to complement this theoretical result. These experiments aim to shed light on how some of these constants influence both the theoretical lower bound obtained above and the practical convergence speed.
\end{remark}

\section{Applications}
\label{sec:applications}

\subsection{Numerical Test on Convergence Rate}
To further understand the convergence behavior of the proposed Sinkhorn-type algorithm, we conduct a numerical study. In the proof of Theorem \ref{thm:existence_minimizer_linear_convergence}, we established a lower bound for the convergence rate, but no explicit discussion was provided regarding the actual speed of convergence in practice. Therefore, we aim to analyze some key factors that might influence the convergence rate.

\paragraph{Coefficient of Entropy regularization} We first investigate the influence of the coefficient in the entropy regularization. Specifically, we modify the reference Gibbs measure $Q$ by introducing a scaling parameter $\sigma>0$ as follows:
\[
\frac{d Q_\sigma}{d(\mu \otimes \nu \otimes \rho)}(x, y, z)
:= \frac{1}{Z_\sigma}\exp\!\left(-\frac{c(x, y, z)}{\sigma}\right),
\quad 
Z_\sigma
:= \int_{\mathcal X\times\mathcal Y\times\mathcal Z}
\exp\!\left(-\frac{c(x, y, z)}{\sigma}\right)
\, d(\mu\otimes\nu\otimes\rho),
\]
so that $Q_\sigma$ is a probability measure on $\mathcal X\times\mathcal Y\times\mathcal Z$. 
Under this definition, the entropic martingale optimal transport problem \eqref{eq:primal_intro} can be equivalently written as
\begin{equation*}
    \inf_{\pi\in \mathcal M(\mu,\nu)} H(\pi|Q_\sigma)
    = \inf_{\pi \in \mathcal M(\mu,\nu)} 
    \Big[
        H(\pi|\mu \otimes \nu\otimes \rho)
        + \frac{1}{\sigma} 
        \int_{\mathcal X\times \mathcal Y\times \mathcal Z}
        c(x,y,z)\,d\pi + \log Z_\sigma
    \Big],
\end{equation*}
where the coefficient $\sigma$ controls the relative weight between the entropy term and the transport cost.
In our experiment, we consider the cost function \( c(x,y,z) = (x - y)^2 + (y - z)^2 \), and test the algorithm under varying values of \(\sigma = 0.2, 1, 5\), while keeping all other components fixed, including the marginal distributions and the initialization.

Figure \ref{image:convergence_rate_of_different_coefficient_of_H} shows how the convergence rate varies with different values of 
\(\sigma\). As 
\(\sigma\) increases, the observed lower bound on the convergence rate also increases. Nevertheless, the central portions of the convergence curves exhibit nearly identical linear decay on a logarithmic scale, regardless of the value of 
\(\sigma\). This suggests that, in this example, the overall convergence speed is largely insensitive to the strength of the entropy regularization.

\begin{remark}
In Figure~\ref{image:convergence_rate_of_different_coefficient_of_H}, we display the relative error $H_n - H_*$ on a logarithmic scale. 
For each fixed $\sigma>0$, the functional value at iteration $n$ is defined by
\[
H_n := H\!\left(\pi^{\,n}\middle | Q_\sigma\right) - \log Z_\sigma
= H\!\left(\pi^{\,n}\middle | \mu\otimes\nu\otimes\rho\right)
+ \frac{1}{\sigma}\!\int_{\mathcal X\times\mathcal Y\times\mathcal Z} c(x,y,z)\,d\pi^{\,n},
\]
where $\pi^{\,n}$ denotes the induced coupling at iteration $n$ (cf.~\eqref{eq:definition_of_induced_prob}).  
The limiting value
\[
H_* := \lim_{n\to\infty} H_n
\]
corresponds to the minimal entropic cost of the regularized problem. The figure therefore plots the sequence of relative errors $H_n - H_*$ to illustrate the convergence behavior of the algorithm under different regularization coefficients~$\sigma$.
\end{remark}

\begin{figure}[htbp]
    \centering
    \begin{minipage}[h]{0.45\linewidth}
        \centering
        \includegraphics[width=\textwidth]{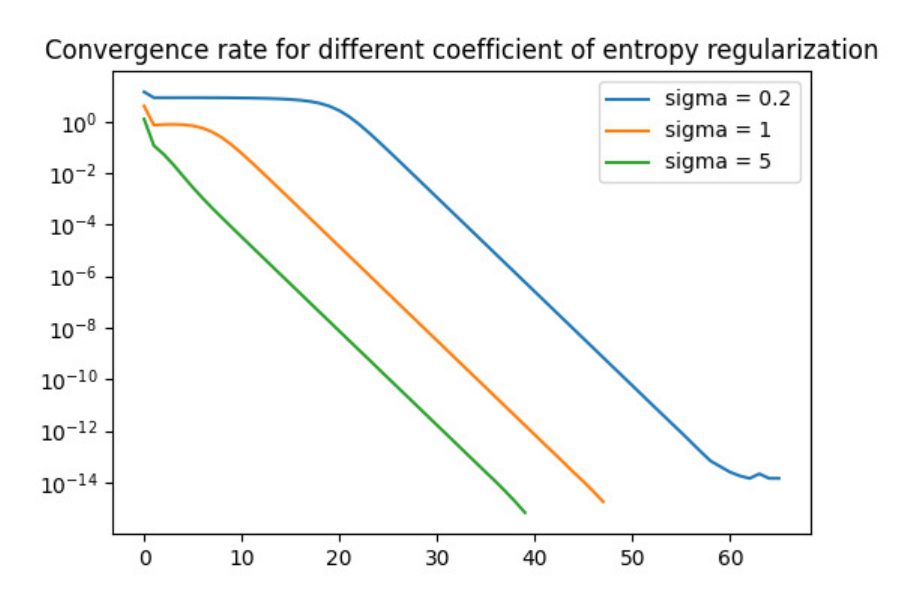}
        \caption{Convergence rate for different coefficient of Entropy regulization.}
        \label{image:convergence_rate_of_different_coefficient_of_H}
    \end{minipage}
    \hfill
    \begin{minipage}[h]{0.45\linewidth}
        \centering
        \includegraphics[width=\textwidth]{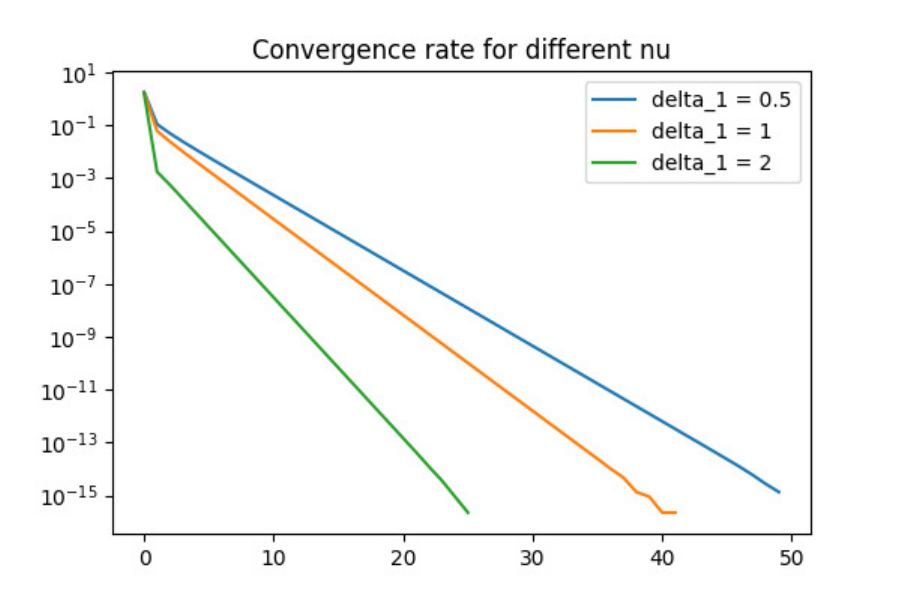}
        \caption{Convergence rate for different level of convex order in $\nu$.}
        \label{image:Convergence_rate_for_different_nu}
    \end{minipage}
\centering
\end{figure}

\paragraph{Dependence on $\nu$}
In this experiment, we fix $\mu$ as a uniform distribution supported on a finite set of grid points $X=\{x_1, \cdots, x_n\}$ such that $x_1< \cdots< x_n$. The marginal $\nu$ is also taken as a uniform distribution, but with two additional points added to the ends of the support, that is, supported on $Y:=\{x_0,X, x_{n+1}\}$ such that $x_0<x_1$ and $x_n<x_{n+1}$. We let $x_{i+1}-x_{i}=\delta_2$ for $i=1, \cdots, n-1$, while the two outermost points are placed symmetrically at a distance $\delta_1$ from the nearest interior points, that is, $x_1-x_0 = x_{n+1} -x_n = \delta_1$. We use $\delta_1$ to quantify the deviation of $\nu$ from the support of $\mu$, and examine how it influences the convergence behavior of Sinkhorn's algorithm.

According to Theorem \ref{thm:uniform_bound_conti} and \ref{thm:existence_minimizer_linear_convergence}, the lower bound on the convergence rate depends on a regularity parameter \(\varepsilon_p\). In particular, the larger $\varepsilon_p$ we have, the larger the lower bound of the convergence of Sinkhorn's algorithm is.  In our setting,  we observe that \(\varepsilon_p\) is increasing with respect to  \(\delta_1\) (see more detail on the construction of $\varepsilon_p$ in the proof of Proposition \ref{prop:existence_p1_conti}). This leads to the theoretical prediction that a larger \(\delta_1\) may lead to a quicker convergence. Figure \ref{image:Convergence_rate_for_different_nu} shows that the convergence speed of the Sinkhorn algorithm improves with larger \(\delta_1\), consistent with the theoretical expectation.

\begin{remark}
An alternative and perhaps more intuitive explanation for the observed convergence rate may be  a  consequence of enhanced convex ordering between \(\mu\) and \(\nu\). Denote by \(\nu'\) the new measure when the outermost support points of \(\nu\) are pushed further apart. Note that \(\nu'\) is larger than $\nu$ in convex order. This perspective aligns well with the Jensen inequality, which characterizes convex order via \(\mathbb{E}_{\nu'}[\varphi] \geq \mathbb{E}_\nu[\varphi]\) for all convex functions \(\varphi\). 
\end{remark}

\subsection{Calibration of Stochastic Volatility Models via EMOT}
\subsubsection{Background}
\label{sec:calibration}
The study of stochastic volatility models (SVMs) dates back to the 1970s, motivated by the empirical observation that the log-returns of SPX exhibit pronounced peaks and heavy tails, deviating significantly from the normal distribution. Seminal works by Hull and White~\cite{hull1987pricing}, Heston~\cite{heston1993closed}, Chesney and Scott~\cite{chesney1989pricing}, and Engle~\cite{engle1982autoregressive} established the theoretical foundations of SVMs and highlighted their practical importance. A systematic introduction and mathematical formulation of these models can be found in~\cite{gatheral2011volatility}, where volatility is modeled as a stochastic process to capture stylized facts such as the volatility smile.

Consider the following naive SVM, introduced in~\cite{henry2019martingale}, defined over the time interval $[0,T]$:
\begin{equation}
\label{eq:SPX/VIX_eqn1}
\begin{aligned}
    dS_t &= S_t a_t\, dW^0_t,\quad d\langle B^0,W^0\rangle_t = \rho\, dt,\\
    da_t &= b(a_t)\, dt + \sigma(a_t)\, dB^0_t,\quad \text{for } t_1 \le t \le t_2,
\end{aligned} \quad \mathbb{P}^0\text{-a.s.}
\end{equation}
Here, \( W^0 \) and \( B^0 \) are $\mathbb{P}^0$-Brownian motions with correlation \( \rho \in [-1,1] \). Traditional calibration of the model to market data involves selecting the functions \( b(\cdot) \), \( \sigma(\cdot) \), and the parameter \( \rho \) so that the model-implied option prices best match observed market prices.

In contrast, the approach in~\cite{henry2019martingale} proposes an alternative calibration method based on the Martingale Schrödinger Bridge (MSB). The idea is to start from an initial guess of \( b(\cdot) \), \( \sigma(\cdot) \), and \( \rho \), which induces a reference measure \( \mathbb{P}^0 \) on the path space. The goal is then to solve the following MSB problem:
\begin{equation}
\label{eq:intro_schrodinger_bridge_conti}
    \inf_{\mathbb P}  H(\mathbb P | \mathbb P^0) \quad \text{subject to} \quad 
    \begin{cases}
        S_t \text{ is a } \mathbb{P}\text{-martingale}, \\
        \mathbb{P} \circ S_{t_i}^{-1} = m_{t_i}, \quad i=1,2,\dots,I,
    \end{cases}
\end{equation}
where \( (m_{t_i})_{i=1}^I \) are the market-implied marginal distributions of the asset at future times \( (t_i)_{i=1}^I \). These marginals can be inferred from option prices. For instance, in the case of European options, we have the relations
\begin{equation*}
    C_{\text{market}}(t_i,K) = \mathbb{E}^{\mathbb{P}}[(S_{t_i} - K)^+], \quad 
    P_{\text{market}}(t_i,K) = \mathbb{E}^{\mathbb{P}}[(K - S_{t_i})^+],
\end{equation*}
which, assuming sufficient smoothness, yield the densities
\[
    m_{t_i}(K) = \frac{\partial^2 C_{\text{market}}(t_i,K)}{\partial K^2} 
    \quad \text{or} \quad 
    m_{t_i}(K) = -\frac{\partial^2 P_{\text{market}}(t_i,K)}{\partial K^2}.
\]
provided that the function $K \mapsto C_{\text{market}}(t_i,K)$ or $K \mapsto P_{\text{market}}(t_i,K)$ is twice continuously differentiable. Once the MSB problem is solved, the resulting measure \( \mathbb{P} \) serves as the calibrated model. In this sense, the MSB framework performs a ``non-parametric fine-tuning" of the initial SVM, ensuring consistency with market-observed marginals while remaining close (in relative entropy) to the reference dynamics.

To make the problem computationally tractable, one may discretize the MSB problem~\eqref{eq:intro_schrodinger_bridge_conti} into a multi-period entropy-regularized martingale optimal transport (EMOT) problem:
\begin{equation}
\label{eq:discrete_MOT_SVM_prob}
    \inf_{\pi \in \mathcal{P}\left( \prod_{i=0}^I X^i \times \prod_{i=0}^{I-1} V^i \right)} 
    H(\pi | Q),
\end{equation}
on the state space \( \prod_{i=0}^I X^i \times \prod_{i=0}^{I-1} V^i \), subject to the marginal and martingale constraints:
\begin{equation*}
    \pi^i = m_{t_i},\quad \int (x^{i+1} - x^i)\, d\pi^{i+1|i} = 0, \quad \text{for } i=0,\dots,I-1.
\end{equation*}
Thanks to the Markov structure in~\eqref{eq:SPX/VIX_eqn1}, the joint measure \( \pi \) admits the decomposition
\[
\pi = \pi^0 \cdot \prod_{i=0}^{I-1} \pi^{i+1|i}, \quad Q = Q^0 \cdot \prod_{i=0}^{I-1} Q^{i+1|i},
\]
so that the relative entropy satisfies the chain rule:
\begin{equation}
\label{eq:relative_entropy_chain}
H(\pi | Q) = H(\pi^0 | Q^0) + \sum_{i=0}^{I-1} \mathbb{E}_{x^i\sim \pi^i} \left[ H(\pi^{i+1|i}(\cdot \mid x^i) | Q^{i+1|i}(\cdot \mid x^i)) \right].
\end{equation}
Since the marginals \( \pi^i \) are fixed to match \( m_{t_i} \), the term \( H(\pi^i | Q^i) \) becomes constant and can be ignored for optimization. As a result, the global EMOT problem~\eqref{eq:discrete_MOT_SVM_prob} can be decoupled into \( I \) one-step subproblems of the form:
\begin{equation}
\label{eq:discrete_MOT_SVM_subprob_clean}
\inf_{\pi^{i,i+1} \in \mathcal{P}(X^i \times X^{i+1} \times V^i)} 
H(\pi^{i,i+1} | Q^{i,i+1})
\end{equation}
subject to
\begin{equation}
\pi^i = m_{t_i}, \quad \pi^{i+1} = m_{t_{i+1}}, \quad 
\int (x^{i+1} - x^i)\, d\pi^{i+1|i} = 0,
\end{equation}
on the state space \( X^i \times X^{i+1} \times V^i \), where \( (S_{t_i}, S_{t_{i+1}}, a_{t_i}) \) resides.

Each of these subproblems is an instance of the entropy-regularized martingale optimal transport problem and can be efficiently solved using the Sinkhorn algorithm. In view of Theorem~\ref{thm:existence_minimizer_linear_convergence}, we can establish a dual formulation and expect exponential convergence. For clarity and numerical illustration, we focus on the one-period case in the next subsection.

\begin{remark}
The Schrödinger Bridge problem, first questioned by Schrödinger in \cite{schrodinger1931umkehrung}, aims to find the most likely stochastic evolution between two probability distributions with prior probability. The Martingale Schrödinger Bridge problem adds the constraint of martingale property and was first introduced in \cite{ henry2019martingale}, regarded as the entropic approximation of the martingale optimal transport problem \cite{beiglbock2013model,de2018entropic,galichon2014stochastic} and recognized as an approach to achieve perfect calibration by Vanilla option prices based on customized models \cite{henry2019martingale}, and compatible with models like stochastic volatility models (SVM). It is also noteworthy that the Martingale Schrödinger Bridge based approach offers a novel model that has the potential to solve the problem of joint calibration of SPX/VIX indices \cite{guyon2024dispersion}, where the classical SVM models can find their difficulties in capturing the prices of options on SPX/VIX simultaneously \cite{jacquier2018vix, song2012tale}. 
\end{remark}

\subsubsection{Numerical Implementation}
In this paper, we conduct numerical experiments of Sinkhorn's algorithm in solving the EMOT problem, and the discrete-space version is listed in Algorithm \ref{alg:sinkhorn_alg}. Here, the product $\mu_i \nu_j \rho_k$ represents the weight of the reference measure \( Q \) on the discrete grid point \( (x_i, y_j, z_k) \), assuming independence among the marginals.
\begin{algorithm}
\caption{Sinkhorn's Algorithm (discrete space case)}
\label{alg:sinkhorn_alg}
\KwIn{State space vectors $x=(x_i)_{i=1}^{N}\in\mathbb{R}^{N}, y=(y_j)_{j=1}^{M}\in\mathbb{R}^{M}, z=(z_k)_{k=1}^{L}\in\mathbb{R}^{L}$; marginal distributions $\mu, \nu, \rho$; initial dual functions $(f^0,g^0,h^0)\in \mathbb{R}^{N} \times \mathbb{R}^{ M} \times \mathbb{R}^{ N}$; cost tensor: $c_{ijk} := c(x_i, y_j, z_k)$ precomputed on the grid $(x_i, y_j, z_k)$; reference measure $Q$ with tensor entries proportional to $\exp(-c_{ijk}) \cdot \mu_i \nu_j \rho_k$; total number of iterations $T$.}
\For{$t=0,1,\ldots,T-1$}{
Solve $\forall i \in [N],\ \tilde{h}^{t+1}_i\leftarrow \tilde{h}_i$ \ s.t. $\sum_{j,k} (y_j-x_i)\exp(-c_{ijk}- f^t_i-g^t_j-\tilde{h}_i(y_j-x_i)) \nu_j\rho_k=0$, \\
$\forall i \in [N],\ \tilde{f}^{t+1}_i \leftarrow  \log \Big(\sum_{j,k}\exp\big(-c_{ijk} - g^{t}_{j} - \tilde{h}^{t+1}_{i}(y_j-x_i)\big)\nu_j\rho_k\Big)$,\\
$\forall j \in [M],\ \tilde{g}^{t+1}_j \leftarrow \log \Big(\sum_{i,k} \exp\big(-c_{ijk} -  \tilde{f}^{t+1}_i -  \tilde{h}^{t+1}_i(y_j-x_i)\big)\mu_i\rho_k\Big)$, \\
$(f^{t+1},g^{t+1},h^{t+1}) \leftarrow \big(\tilde{f}^{t+1}+\sum_{j}\tilde g_j^{t+1}\nu_j-x\sum_i\tilde h_i^{t+1}\mu_i,\tilde{g}^{t+1}-\sum_{j}\tilde g^{t+1}_j\nu_j+y\sum_i\tilde h_i^{t+1}\mu_i,\tilde h^{t+1}-\sum_i\tilde h_i^{t+1}\mu_i\big)$;
}
\KwOut{Optimized dual triples $(f^T,g^T,h^T) \in \mathbb{R}^{N} \times \mathbb{R}^{M} \times \mathbb{R}^{N}$; approximated solution of the EMOT problem in the discrete setting given by $\pi_{ijk} = \exp\big(-c_{ijk} - f^T_i - g^T_j - h^T_i \cdot (y_j - x_i)\big) \cdot \mu_i \nu_j \rho_k
\quad \text{for all } i \in [N], j \in [M], k \in [L]$.}
\end{algorithm}
\paragraph{Problem Setting}
Based on the content mentioned in Section 3.1, we generate market data using the classic SVM model: a combination of the Heston model and white noise. The Heston model is the most famous and popular among SVM models. We discretize the Heston process for Monte Carlo simulation and provide the joint distribution of stock prices at times $t_1$ and $t_2$, as well as the volatility of stock prices at time $t_1$. We present the Heston model
\begin{equation}
\label{eq:Heston}
\begin{aligned}
d S_t & = \sqrt{v_t} S_t d W_t \\
d v_t & =  -\lambda(v_t - \bar{v}) dt + \eta \sqrt{v_t} d Z_t,
\end{aligned}
\end{equation}
where $\lambda$ is the speed of reversion of $V_t$ to its long-term mean $\bar{v}$. Additionally, to ensure that the volatility remains strictly positive, we assume the Feller condition \cite{karlin1981second}, that is $\frac{2\lambda \bar{v}}{\eta^2} >1$.

During the numerical experiments, we aim to convert the simulated data generated by the Heston model into a three-dimensional joint probability distribution. Here, we generate a three-dimensional grid and place the previous simulated data within it, recording the number of points in each grid cell as the probability density. At the same time, we take the center point of each grid cell to represent the point in the discrete probability. This way, we obtain a discretized distribution $(S_{t_1},S_{t_2},v_{t_1})$ of the Heston model as the reference measure $Q$ (see Figure \ref{image:RefM-Q}). 


To simulate realistic market data, we perturb the outputs of the base Heston model by adding independent white noise. This addition accounts for the fact that real-world market distributions cannot be perfectly captured by the Heston dynamics alone. The resulting data better reflect the statistical uncertainties present in observed financial markets, particularly in the distributions of asset prices and volatilities at different time points.

Specifically, we introduce noise with different magnitudes in each dimension of the simulated data—corresponding to \( S_{t_1} \), \( S_{t_2} \), and \( v_{t_1} \)—to obtain three randomized samples incorporating market variability. The noise levels are controlled by coefficients \( (\sigma_1, \sigma_2, \sigma_3) \), chosen appropriately based on the grid resolution.
The resulting marginal distributions used in the calibration are then given by:
\begin{align*}
\mu &= \text{Law}\left( S_{t_1} + \sigma_1 \sqrt{t_1} \, \mathcal{N}(0,1) \right), \\
\nu &= \text{Law}\left( S_{t_2} + \sigma_2 \sqrt{t_2} \, \mathcal{N}(0,1) \right), \\
\rho &= \text{Law}\left( v_{t_1} + \sigma_3 \sqrt{t_1} \, \mathcal{N}(0,1) \right).
\end{align*}
The corresponding histograms of these noisy marginals are shown in Figure~\ref{image:Histogram}.


\begin{figure}[htbp]
\centering
\subfigure{
    \begin{minipage}{0.45\linewidth}
    \centering
    \includegraphics[width=\textwidth]{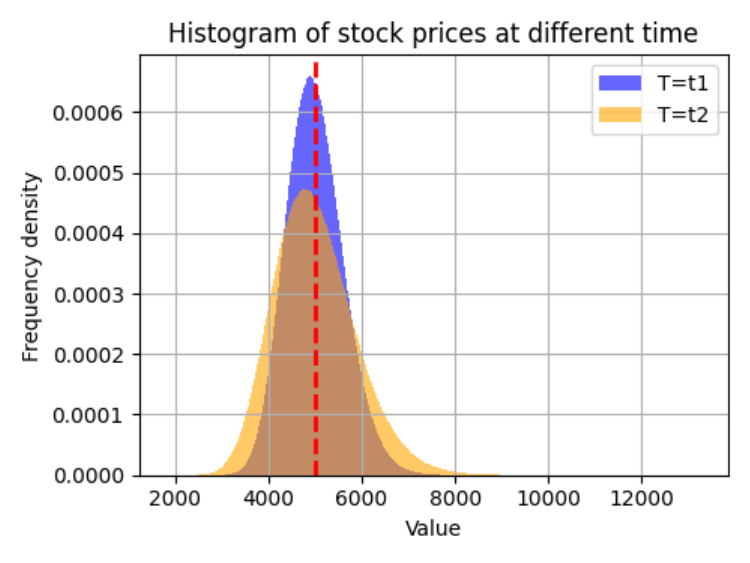}
    \end{minipage}
}
\subfigure{
    \begin{minipage}{0.45\linewidth}
    \centering
    \includegraphics[width=\textwidth]{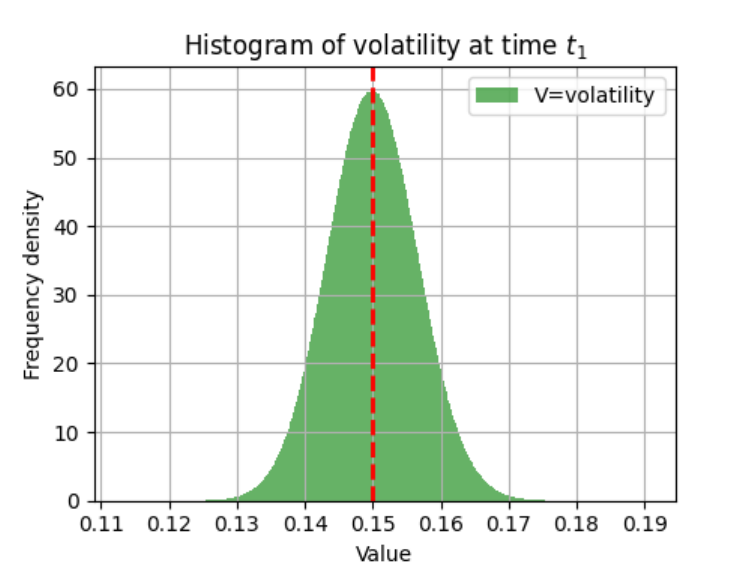}
    \end{minipage}
}
\centering
\caption{Histogram of stock prices at time $t_1$ and $t_2$ and volatility at time $t_1$}
\label{image:Histogram}
\end{figure}

Then, following the previous grid partitioning method, we extract the grid points corresponding to each dimension, transforming the data into three distributions $\mu$, $\nu$ and $\rho$. For Heston model generation, we chose $K = 8\times 10^7$ points for simulation, implemented in the Google Colab platform, with a running time of 2 minutes 51 seconds and using approximately 9.8 GB of RAM. To make it easier to organize the notation, we present the parameters required for generating market data and calculating the reference measure during the numerical experiments, which are summarized in Table \ref{Table:Describe heston}. 

\begin{table}
      \centering
      \begin{tabular}{c c c}
        \toprule
        \textbf{Parameters} & \textbf{Values/Dimensions} & \textbf{Explanation} \\
        \midrule
        $S_0$ & 5000 & Initial stock price at time 0 \\
        $v_0$ &  0.15 & Initial volatility at time 0\\
        $\bar{v}$ & 0.15 & Long-term mean of volatility\\
        $\lambda$ & 1 & Speed of reversion of $v_t$ to its long-term mean \\
        $\eta$ & 0.05 & Coefficient of vol of vol \\
        $t_1$ & 0.1 & First expiration date\\
        $t_2$ & 0.2 & Second expiration date\\
        $\Delta_{t}$ & 0.01 & Time step size\\
        $K$   & $8\times 10^7$ & Heston model simulations number\\
        $(\sigma_1,\sigma_2,\sigma_3)$ & (100,150,0.01)& Coefficient of the white noise \\
        \bottomrule
      \end{tabular}
      \caption{Parameters and variables in Heston model}
      \label{Table:Describe heston}
\end{table}

\paragraph{Numerical Results}
Following the algorithm, we calculate the EMOT problem and obtain the optimized three-dimensional transition matrix. Here, we record all parameters of the algorithm in Table \ref{Table:Describe Sinkhorn}. The algorithm obtains the optimal transfer probability matrix $\pi^\star$. 

\begin{table}
      \centering
      \begin{tabular}{c c c }
        \toprule
        \textbf{Parameters} & \textbf{Values} & \textbf{Explanation} \\
        \midrule
        vector $x$   &  [3437.5, 3512.5, 3587.5,...,6362.5] & From interval [3400,6400], pick $x_{dim}= N=40$ points \\
        vector $y$   &  [3235, 3305, 3375,...,6665] & From interval [3200,6700], pick $y_{dim}=M=50$ points \\
        vector $z$   &  [0.138, 0.144,...,0.162] & From interval [0.135,0.165], pick $z_{dim}=5$ points \\
        $f_0$    & $0_{x_{dim}}$ & Initial point, dimension $x_{dim} = 40$ \\
        $g_0$    & $0_{y_{dim}}$ & Initial point, dimension $y_{dim} = 50$ \\
        $h_0$    & $0_{x_{dim}}$ & Initial point, dimension $x_{dim} = 40$ \\
        T & 1000 & Total number of iterations \\
        $c_{i,j,k}$ & $-\log(\frac{d Q}{d \mu \otimes \nu \otimes \rho})$  & Reference measure with dimensions$(40,50,5)$\\
        \bottomrule
      \end{tabular}
      \caption{Parameters and variables in Sinkhorn's algorithm}
      \label{Table:Describe Sinkhorn}
\end{table}

We need to verify whether the optimized transition probability matrix satisfies the marginal distribution and martingale conditions. From Figure \ref{image:MOT-marginal}, we can observe that the marginal distributions of matrix $\pi$ are consistent with $\mu$, $\nu$, and from Figure \ref{image:MOT-martingal} they satisfy the martingale condition within a magnitude of relative error at $10^{-6}$. Also, the optimal transport matrix $\pi$ with fixed volatility $Z=0.15$ can be shown as in Figure \ref{image:MOTmatrix}. 

From the perspective of the real market, for the given distributions $\mu$, $\nu$, and $\rho$, they do not satisfy the Heston model. Therefore, simply using SVM to calibrate the joint distribution will result in inaccurate marginal distributions. However, using the optimal joint distribution obtained from the EMOT problem not only has the same marginal distributions as the real market data but also satisfies the required martingale property for no-arbitrage. Comparing $\pi$ and $Q$ (see Figures \ref{image:RefM-Q} and \ref{image:MOTmatrix}), it can be seen a difference between the two distributions.

\begin{figure}[htbp]
\centering
\includegraphics[width=0.45\textwidth]{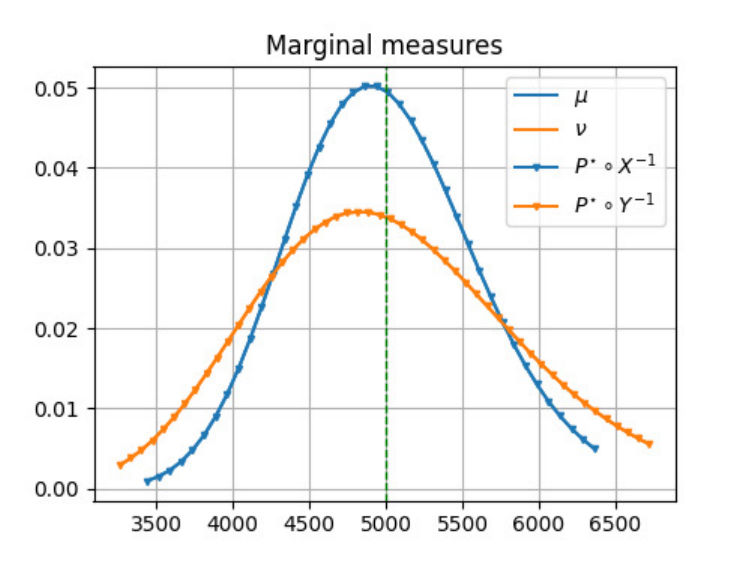}
\caption{Lines: density function of probability measures $\mu,\nu$; Dotted lines: marginal distribution $X,Y$ of optimal transport matrix $\pi^\star$.}
\label{image:MOT-marginal}
\end{figure}

\begin{figure}[htbp]
    \centering
    \begin{minipage}[h]{0.45\linewidth}
        \centering
        \includegraphics[width=\textwidth]{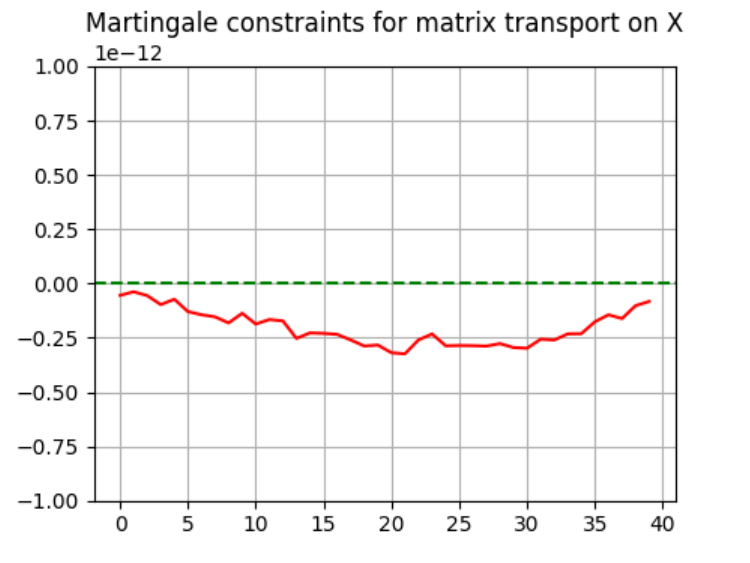}
        \caption{Martingale constraints: $\int_{\mathcal Y,\mathcal Z} (y-x)d\hat \pi_{y,z|x}$ for different $x$.}
        \label{image:MOT-martingal}
    \end{minipage}
    \hfill
    \begin{minipage}[h]{0.45\linewidth}
        \centering
        \includegraphics[width=\textwidth]{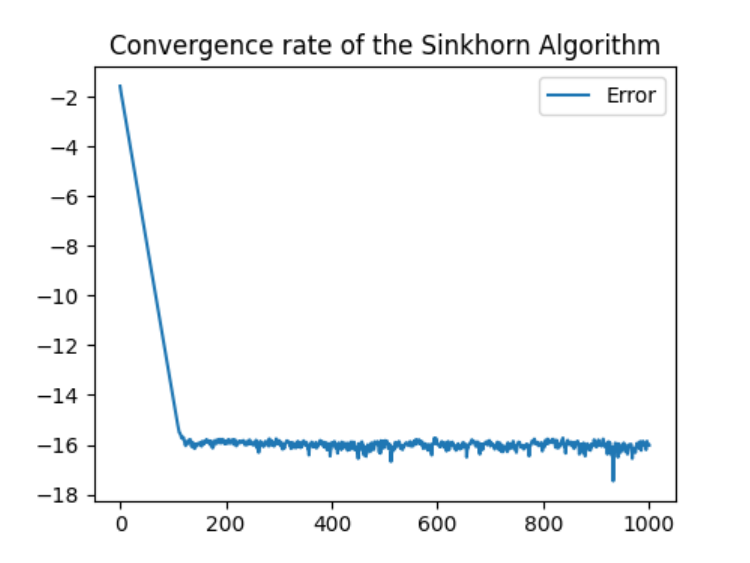}
        \caption{Convergence result: $\max_{s\in[T]} \mathcal{G}(f^s,g^s,h^s) - \mathcal{G} (f^t,g^t,h^t)$ versus the iteration step.}
        \label{image:Convergence}
    \end{minipage}
\centering
\end{figure}

\begin{figure}[htbp]
    \centering
    \begin{minipage}[h]{0.48\linewidth}
        \centering
        \includegraphics[width=\textwidth]{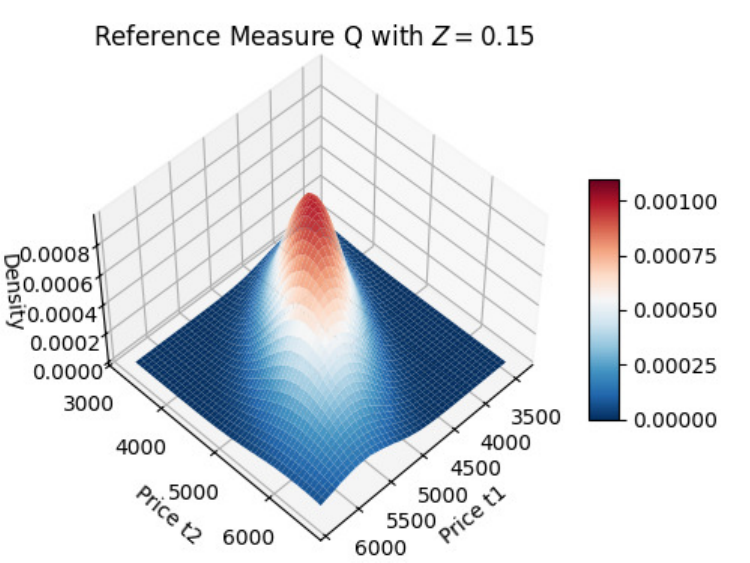}
        \caption{Reference measure $Q$ with fixed $Z=0.15$}
        \label{image:RefM-Q}
    \end{minipage}
    \hfill
    \begin{minipage}[h]{0.48\linewidth}
        \centering
        \includegraphics[width=\textwidth]{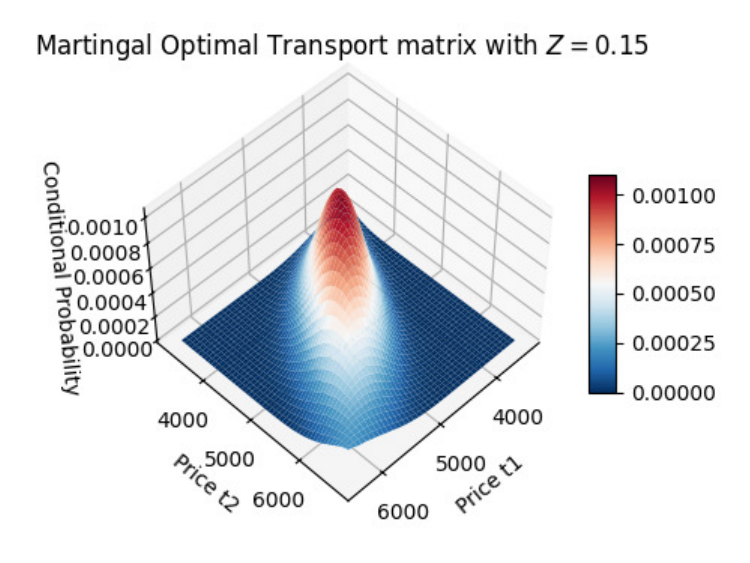}
        \caption{Martingale Optimal Transport probability matrix with fixed $Z=0.15$}
        \label{image:MOTmatrix}
    \end{minipage}
\centering
\end{figure}
It is noteworthy that we have recorded the latest Entropy results $H(\pi|Q)$ at each step during the iterative process of Sinkhorn's algorithm, and plotted a graph of its convergence speed over time (see Figure \ref{image:Convergence}). Clearly, during the 1000-step iteration process, the first 200 steps show stable convergence, and the convergence rate satisfies exponential convergence properties. And the subsequent process (200-1000 steps) is to make tiny oscillations around the optimal point, achieving the optimal solution. In the implementation process, we use the Google Colab platform to run the code, in which Sinkhorn's algorithm is computed in about 9 seconds (110 iterations/s), which has a high running efficiency.

\section{Proof of the Main Results}
\label{sec:main_result_proofs}

\subsection{Uniform bound for dual functions}
\label{proof:uniform_bound}
\begin{proof}{Proof of Proposition \ref{prop:existence_p1_conti}}
First, Assumption~\ref{assu:exsitence_mg_transport_conti} ensures that \( \nu \) is not the Dirac measure \( \delta_0 \) at 0 and satisfies the zero-mean condition \( \int_{\mathbb{R}} y \nu(dy) = 0 \). In particular, this implies that \( \nu((0, +\infty)) > 0 \) and \( \nu((-\infty, 0)) > 0 \). 
Consequently, we can choose two constants \( Y^u > 0 \) and \( Y^d < 0 \) such that both \( \nu([Y^u, +\infty)) > 0 \) and \( \nu((-\infty, Y^d]) > 0 \). We construct $\hat \pi$ by perturbing $\bar \pi$ defined in Assumption \ref{assu:exsitence_mg_transport_conti}:
\begin{equation*}
    d\hat \pi(x,y,z) = \left\{
    \begin{array}{l}
        d\bar \pi(x,y,z) + \delta \mu((-\infty,0))\nu((-\infty,Y^d])  d \mu\otimes  \nu\otimes\rho , \text{\quad for\ \ } x\ge 0\  \text{and}\  y\ge Y^u; \\
        d\bar \pi(x,y,z) - \delta \mu([0,+\infty))\nu((-\infty,Y^d])   d \mu\otimes  \nu\otimes\rho, \text{\quad for\ \ } x< 0\  \text{and}\  y\ge Y^u; \\
       d\bar \pi(x,y,z) - \delta  \mu((-\infty,0))\nu([Y^u,+\infty)) d\mu\otimes \nu\otimes\rho , \text{\quad for\ \ } x\ge 0\  \text{and}\  y\le Y^d;\\
       d\bar \pi(x,y,z) +\delta \mu([0,+\infty))\nu([Y^u,+\infty))  d \mu\otimes  \nu\otimes \rho , \text{\quad for\ \ } x< 0\  \text{and}\  y\le Y^d;\\
       d\bar \pi(x,y,z), \quad \text{otherwise};
    \end{array}
    \right.
\end{equation*}
where $\delta$ is a positive constant defined by
$$\delta:= \frac{1}{2\big(\mu((-\infty,0))\vee\mu([0,+\infty))\big)\cdot  \big(\nu((-\infty,Y^d])\vee\nu([Y^u,+\infty))\big)} \cdot \exp(-|p|_{L^\infty(\mu\otimes \nu\otimes\rho)}).$$ 
It is easy to check from the construction and the choice of $\delta$ that 
\begin{equation}
\label{eq:pi1_is_positive}
    \frac{1}{2}\cdot \frac{d \bar \pi}{d \mu\otimes  \nu\otimes\rho}\le \frac{d\hat \pi}{d \mu\otimes  \nu\otimes\rho}\le  \frac{3}{2}\cdot \frac{d\bar \pi}{d \mu\otimes  \nu\otimes\rho}.
\end{equation}
Therefore, the quotient density $|\log d\hat \pi/d(\mu\otimes \nu\otimes\rho)|$ is bounded by Assumption \ref{assu:exsitence_mg_transport_conti}. We can verify the marginal distribution condition for $\hat \pi\in \Pi(\mu,\nu)$, using the equalities 
\begin{equation*}
\begin{aligned}
& \int_{\mathcal{Y} \times \mathcal{Z}} \left[ \hat\pi(dx, dy, dz) - \bar\pi(dx, dy, dz) \right] \cdot \mathbf{1}_{\{x \ge 0\}} \\
& =\delta\Big(  \int_{y\ge Y^u} \mu((-\infty,0))\nu((-\infty,Y^d])  d \nu -  \int_{y\le Y^d} \mu((-\infty,0))\nu([Y^u,+\infty)) d \nu\Big)\mu(dx) = 0,\\
\mbox{and}\quad & \int_{\mathcal{X} \times \mathcal{Z}} \left[ \hat\pi(dx, dy, dz) - \bar\pi(dx, dy, dz) \right] \cdot \mathbf{1}_{\{y > Y^u\}}  \\
& =\delta \Big(\int_{x\ge 0}  \mu((-\infty,0))\nu((-\infty,Y^d])  d\mu -   \int_{x< 0} \mu([0,+\infty))\nu((-\infty,Y^d])   d\mu \Big)\nu(dy)  = 0.
\end{aligned}
\end{equation*}

By a similar computation, the result can also be shown for the case $x<0$ or $y\le Y^d$, and therefore we show $\hat \pi \in \Pi(\mu,\nu)$. The purpose of this perturbation is to break the martingale constraint slightly, creating a uniform and strictly positive lower bound on the expectation of $(Y-X)$ conditional on $X$ (for $X\ge 0$, if $\text{Law}(X,Y)=\hat \pi$), while carefully preserving the prescribed marginal distributions. This strict positivity will be crucial for establishing uniform lower bounds in the dual convergence analysis.

The conditional increment  $\int_{\mathcal Y,\mathcal Z} (y-x)d\hat \pi_{y,z|x}$ for $x\ge 0$ can be calculated by noticing the difference
\begin{equation*}
\begin{aligned}
    \int_{\mathcal Y\times\mathcal Z} (y-x)d\hat \pi_{y,z|x} &- \int_{\mathcal Y\times \mathcal Z} (y-x)d\bar \pi_{y,z|x} \\
    = \delta  & \int_{y\ge Y^u} (y-x) \mu((-\infty,0))\nu((-\infty,Y^d])  \nu(dy) \\  
    & \quad \quad \quad \quad \quad - \delta \int_{\tilde y \le Y^d} (\tilde y-x)\mu((-\infty,0))\nu([Y^u,+\infty)) \nu(d\tilde y) \\
    = \delta & \int_{y\ge Y^u} y \mu((-\infty,0))\nu((-\infty,Y^d])  \nu(dy)  \\
    & \quad \quad \quad \quad \quad - \delta  \int_{\tilde y\le Y^d} \tilde y\mu((-\infty,0))\nu([Y^u,+\infty)) \nu(d\tilde y)\\
    \ge \delta & (Y^u-Y^d) \mu((-\infty,0)) \big(\nu([Y^u,+\infty)) \wedge \nu((-\infty,Y^d
    ])\big), \quad x>0,
\end{aligned}
\end{equation*}
Similarly, for the case when $x<0$, the conditional increment for $\hat \pi$ can be computed and we summarize these two estimations to be
\begin{equation*}
\label{eq:pi_hat_epsilon_p}
    \int_{\mathcal Y\times \mathcal Z
    } (y-x)d\hat \pi_{y,z|x}    \left\{  \begin{array}{l}
         \ge\ \  \delta  (Y^u-Y^d) \mu((-\infty,0)) \big(\nu([Y^u,+\infty)) \wedge \nu((-\infty,Y^d
    ])\big) :=\epsilon_p, \text{\quad for\ } x \ge 0; \\
   \le -\delta  (Y^u-Y^d) \mu([0,+\infty)) \big(\nu([Y^u,+\infty)) \wedge \nu((-\infty,Y^d
    ])\big):=-\epsilon_p, \text{\quad for\ }  x < 0; \\
    \end{array}
    \right.
\end{equation*}
As a consequence, it is possible for us to find a constant $\epsilon_p$ to satisfy the required condition \eqref{eq:pi1_property_semi_mg}. 
Furthermore, the relative entropy is finite. From inequality \eqref{eq:pi1_is_positive}, we have $d\hat \pi/d\bar\pi\in[1/2,3/2]$, so that
\begin{equation*}
\begin{aligned}
     H(\hat \pi|Q) \le \int \Big|\log \frac{d\hat \pi}{dQ}\Big| d\bar \pi \le \int \Big|\log \frac{d\bar \pi}{dQ}\Big|  d\hat \pi + \log 2 \le \frac{3}{2}\int \Big|\log \frac{d\bar \pi}{dQ}\Big|  d\bar \pi+ \log 2.
\end{aligned}
\end{equation*}
For the right-hand side, we estimate
\begin{equation*}
\begin{aligned}
    \int\Big|\log \frac{d\bar \pi}{dQ}\Big|d\bar \pi &=  H(\bar \pi|Q) + 2\int\Big(\log \frac{d\bar \pi}{dQ}\Big)^-d\bar \pi \le H(\bar \pi|Q)  + 2e^{-1}. 
\end{aligned}
\end{equation*}
where the last inequality follows from the fact that $f\log f \ge -e^{-1}$. 
\end{proof}

In the following, we are going to first derive a uniform $L^1(\mu\otimes \nu\otimes\rho)$ bound for the dual coefficients $(f^n,g^n,h^n)_{n\ge 1}$ and then establish a uniform $L^\infty(\mu\otimes \nu\otimes\rho)$ bound. Those bounds are crucial for analyzing the convergence property of Sinkhorn's algorithm. Note that the estimates are stable under the invariant transforms \eqref{eq:invariant_transform}. 

\begin{proposition}\label{prop:l1_bound_conti}
     Under Assumptions \ref{assu:lipschitz_c_conti}, \ref{assu:exsitence_mg_transport_conti} and \ref{assu:measuable_init_bdd}, through Sinkhorn's iterations, the functions $(\tilde f^{n},\tilde g^{n},\tilde h^{n})_{n\ge 1}$ satisfy the uniform $L^1$-estimate: 
    \begin{equation*}
    \begin{aligned}
           &\sup_{n\ge1}|\tilde h^n-\tilde h^n(0)|_{L^1(\mu)}<+\infty,\\ 
           &\sup_{n\ge1}\int_{\mathcal X} \big|\tilde f^n(x)-\tilde f^n(0)-\tilde h^n(0) x \big| \mu(dx)<+\infty,\\
            &\sup_{n\ge1} \int_{\mathcal Y} \big|\tilde g^n(y) + \tilde f^n(0)+\tilde h^n(0) y \big| \nu(dy)<+\infty.
    \end{aligned}
    \end{equation*}
    In addition, we have the following uniform pointwise upper bound for the value $\tilde f^n(x)-\tilde f^n(0)-\tilde h^n(0) x$:
    \begin{equation}\label{eq:uniform-upperbound-f}
        \sup_{n\ge 1} \tilde f^n(x)-\tilde f^n(0)-\tilde h^n(0) x \le L_c|x|.
    \end{equation} 
\end{proposition}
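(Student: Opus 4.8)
The plan is to obtain the pointwise bound \eqref{eq:uniform-upperbound-f} directly from the first-order optimality conditions, and then to derive the three $L^1$ estimates by combining the monotonicity of the dual functional $\mathcal G$ along the Sinkhorn iterations with a perturbation of the reference martingale transport $\bar\pi$ assumed in Assumption \ref{assu:exsitence_mg_transport_conti}. The estimate on the martingale multiplier $h$ is the delicate one; the estimates on $f$ and $g$ follow from it by bootstrapping. Throughout we use the (without-loss-of-generality) normalization $\int x\,\mu(dx)=\int y\,\nu(dy)=0$.

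\emph{The pointwise bound.} Equation \eqref{eq:h_equality_conitnuous} is exactly the stationarity condition — hence, by strict convexity of log-sum-exp, the minimality condition — for the map $h\mapsto\log\int_{\mathcal Y\times\mathcal Z}\exp(-c(x,y,z)-g^n(y)-h(y-x))(\nu\otimes\rho)(dy,dz)$, so by \eqref{eq:update_formula_fg} one has $\tilde f^{n+1}(x)=\min_{h\in\mathbb R}\log\int_{\mathcal Y\times\mathcal Z}\exp(-c(x,y,z)-g^n(y)-h(y-x))(\nu\otimes\rho)(dy,dz)$. Plugging in the suboptimal competitor $h=\tilde h^{n+1}(0)$ and pulling the factor $e^{hx}$ out of the integral,
\[
\tilde f^{n+1}(x)-\tilde f^{n+1}(0)-\tilde h^{n+1}(0)x\ \le\ \log\frac{\int_{\mathcal Y\times\mathcal Z}e^{-c(x,y,z)-g^n(y)-\tilde h^{n+1}(0)y}\,(\nu\otimes\rho)}{\int_{\mathcal Y\times\mathcal Z}e^{-c(0,y,z)-g^n(y)-\tilde h^{n+1}(0)y}\,(\nu\otimes\rho)}\ \le\ L_c|x|,
\]
where the last step uses $|c(x,y,z)-c(0,y,z)|\le L_c|x|$ from Assumption \ref{assu:lipschitz_c_conti}. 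Since the computation uses only that $g^n$ is a fixed measurable function, \eqref{eq:uniform-upperbound-f} holds for every $n\ge1$.

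\emph{The variational framework.} The Sinkhorn steps \eqref{eq:h_equality_conitnuous}--\eqref{eq:update_formula_fg} are exact coordinatewise maximizations of $\mathcal G$ and the renormalization \eqref{eq:normalization_iter_n} leaves $\mathcal G$ invariant, so $(\mathcal G(f^n,g^n,h^n))_n$ is nondecreasing; it is bounded above by $\mathcal G^\ast\le H(\hat\pi|Q)<+\infty$ (weak duality together with Proposition \ref{prop:existence_p1_conti}) and below by $\mathcal G(f^0,g^0,h^0)>-\infty$. Moreover \eqref{eq:update_formula_fg} forces $\int_{\mathcal X\times\mathcal Z}e^{-c-\tilde f^{n+1}(x)-\tilde g^{n+1}(y)-\tilde h^{n+1}(x)(y-x)}(\mu\otimes\rho)=1$ for $\nu$-a.e.\ $y$, hence $Z(\tilde f^{n+1},\tilde g^{n+1},\tilde h^{n+1})=1$; since $Z$ is invariant under \eqref{eq:normalization_iter_n} and $\int g^n\,d\nu=0$, we get $\mathcal G(f^n,g^n,h^n)=-1-\int_{\mathcal X}f^n\,d\mu$, so that $\int f^n\,d\mu$ is uniformly bounded — equivalently, via \eqref{eq:invariant_transform}, the sum of $\int(\tilde f^n(x)-\tilde f^n(0)-\tilde h^n(0)x)\,\mu(dx)$ and $\int(\tilde g^n(y)+\tilde f^n(0)+\tilde h^n(0)y)\,\nu(dy)$ is uniformly bounded. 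A Jensen inequality applied to the identity $\int_{\mathcal Y}e^{\tilde g^n(y)-g^{n-1}(y)}\nu(dy)=Z(\tilde f^n,g^{n-1},\tilde h^n)=1$ also gives $\int\tilde g^n\,d\nu\le\int g^{n-1}\,d\nu=0$, and a Jensen inequality on the definition of $\tilde f^n(0)$ gives $\tilde f^n(0)\ge-|c|_{L^\infty}$.

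\emph{The $L^1$ bound on $h$ (the crux).} This is where the real difficulty lies, because $h$ enters both the $\mathcal X$- and the $\mathcal Y$-structure of $\mathcal G$ and the normalization only fixes $\int h^n\,d\mu=0$. We test the identity $\mathcal G(f^n,g^n,h^n)=\inf_{\pi\ge0}\mathcal L(\pi,f^n,g^n,h^n)$ against a transport in $\Pi(\mu,\nu)$ whose conditional increment is aligned with $-\mathrm{sign}(h^n)$. Concretely, generalizing the perturbation used in the proof of Proposition \ref{prop:existence_p1_conti}, fix $Y^d<0<Y^u$ with $\nu((-\infty,Y^d]),\nu([Y^u,+\infty))>0$ and set
\[
d\pi^{(n)}=d\bar\pi+\delta\,\beta_n(x)\Big(\nu((-\infty,Y^d])\,\mathbbm{1}_{\{y\ge Y^u\}}-\nu([Y^u,+\infty))\,\mathbbm{1}_{\{y\le Y^d\}}\Big)\,d(\mu\otimes\nu\otimes\rho),
\]
where $\beta_n:=-\mathrm{sign}(h^n)+\int_{\mathcal X}\mathrm{sign}(h^n)\,d\mu$ and $\delta>0$ is a constant (depending only on $|p|_{L^\infty}$ and $\nu$) small enough that $\tfrac12\le d\pi^{(n)}/d\bar\pi\le\tfrac32$. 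As in Proposition \ref{prop:existence_p1_conti} one checks $\pi^{(n)}\in\Pi(\mu,\nu)$, $H(\pi^{(n)}|Q)\le C$ uniformly in $n$, and $\int_{\mathcal Y\times\mathcal Z}(y-x)\,d\pi^{(n)}_{y,z|x}=\kappa\delta\,\beta_n(x)$ for a fixed $\kappa>0$. Since the marginal terms of $\mathcal L$ vanish on $\Pi(\mu,\nu)$ and $\int h^n\,d\mu=0$,
\[
\mathcal G(f^n,g^n,h^n)\le\mathcal L(\pi^{(n)},f^n,g^n,h^n)=H(\pi^{(n)}|Q)+\kappa\delta\!\int_{\mathcal X}h^n\beta_n\,d\mu=H(\pi^{(n)}|Q)-\kappa\delta\!\int_{\mathcal X}|h^n|\,d\mu,
\]
and, combined with $\mathcal G(f^n,g^n,h^n)\ge\mathcal G(f^0,g^0,h^0)$, this yields $\sup_n\|h^n\|_{L^1(\mu)}<\infty$. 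Since $\tilde h^n-\tilde h^n(0)=h^n-h^n(0)$, this is the first claimed estimate once the recentering constant is controlled — which is achieved in tandem with the next step, by combining $\sup_n\|h^n\|_{L^1(\mu)}<\infty$ with the continuity of $\tilde h^n$ on $\mathcal X_0$ and the two-sided control of $\int\tilde f^n\,d\mu$.

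\emph{Closing the estimates for $f$ and $g$.} Once $\sup_n\|\tilde h^n-\tilde h^n(0)\|_{L^1(\mu)}<\infty$ is available, $\int x\,\tilde h^n(x)\,\mu(dx)=\int x(\tilde h^n(x)-\tilde h^n(0))\,\mu(dx)$ is uniformly bounded; feeding this into the Jensen lower bound for $\tilde g^{n-1}$ coming from \eqref{eq:update_formula_fg} and into the renormalization \eqref{eq:normalization_iter_n} shows that $\int e^{-g^{n-1}}\,d\nu$ is uniformly bounded, hence so are the pointwise bound $\tilde f^n(x)\le|c|_{L^\infty}+\log\int e^{-g^{n-1}}\,d\nu$ (take $h=0$ in the minimum defining $\tilde f^n$) and $\int\tilde f^n\,d\mu$. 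Combining the two-sided bound on $\int\tilde f^n\,d\mu$, the bound $\int\tilde g^n\,d\nu\le0$, the pointwise upper bound \eqref{eq:uniform-upperbound-f}, the lower bound $\tilde f^n(0)\ge-|c|_{L^\infty}$, and the uniformly bounded sum of the two normalized integrals from the second step, one closes the $L^1$ estimates for $\tilde f^n(x)-\tilde f^n(0)-\tilde h^n(0)x$ and $\tilde g^n(y)+\tilde f^n(0)+\tilde h^n(0)y$; the compact support of $\mu$ and $\nu$ and the zero-mean normalization are used to pass between pointwise and integrated bounds. The single genuine obstacle in the whole argument is the multiplier $h$: it is the only coefficient whose control requires leaving the variational identities and actually perturbing the transport plan.
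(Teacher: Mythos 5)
Your derivation of the pointwise bound \eqref{eq:uniform-upperbound-f} via the variational characterization $\tilde f^{n+1}(x)=\min_h\log\int e^{-c-g^n-h(y-x)}d(\nu\otimes\rho)$ is correct and cleaner than the paper's convexity estimate, and your treatment of the $h$-estimate is a genuinely different route: instead of the paper's fixed competitor $\hat\pi$ from Proposition \ref{prop:existence_p1_conti} combined with the two-point comparison bound \eqref{eq:h_bound_eq_conti}, you test the Lagrangian against an $n$-dependent perturbation of $\bar\pi$ whose conditional drift is aligned with $-\mathrm{sgn}(h^n)$. That construction checks out ($\int\beta_n\,d\mu=0$ preserves the $\nu$-marginal, positivity and the uniform entropy bound follow as in Proposition \ref{prop:existence_p1_conti}, and $\int h^n\beta_n\,d\mu=-\|h^n\|_{L^1(\mu)}$ by the normalization), so you do obtain $\sup_n\|h^n\|_{L^1(\mu)}<\infty$; a small slip aside (weak duality should be tested against $\bar\pi\in\mathcal M(\mu,\nu)$, not $\hat\pi$, to bound $\mathcal G^*$), this part is sound.

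The genuine gap is the recentering. The proposition is stated with the point values $\tilde h^n(0)$, $\tilde f^n(0)$, while your crux estimate controls the mean-centered $h^n=\tilde h^n-\int\tilde h^n d\mu$; passing to $\|\tilde h^n-\tilde h^n(0)\|_{L^1(\mu)}$ requires a bound on $|\tilde h^n(0)-\int\tilde h^n d\mu|$ that is uniform in $n$, and your proposed bridge (``continuity of $\tilde h^n$ on $\mathcal X_0$'' plus integral bounds) cannot supply it: continuity without a uniform modulus does not convert an $L^1$ bound into control of a single point value, the derivative bounds of Lemma \ref{lem:uniform_bdd_deri} are only available after Theorem \ref{thm:uniform_bound_conti} (which rests on this proposition), and the bound $|\tilde h^{n+1}|_{L^\infty(\mu)}\le c_h(|g^n|_{L^\infty(\nu)})$ of Proposition \ref{prop:well_defined_fgh} is not uniform in $n$ at this stage. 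The same missing constant reappears in your closing step for $\tilde g^n$: the quantities you list only bound the integral $\int\tilde g^n d\nu+\tilde f^n(0)$, and an integral bound does not give an $L^1$ bound without a one-sided pointwise bound on $\tilde g^n(y)+\tilde f^n(0)+\tilde h^n(0)y$ (the paper gets one by testing against $\bar\pi$; a Jensen argument on the defining integral of $\tilde g^n$ would also work, but it again produces the term $(\tilde h^n(0)-\int\tilde h^n d\mu)\,y$). The missing ingredient is precisely the paper's inequality \eqref{eq:h_bound_eq_conti}, $(\tilde h^{n+1}(x')-\tilde h^{n+1}(x))\,\mathrm{sgn}(x'-x)\le 2L_c$, obtained by comparing the first-order conditions at two points and using the martingale property of the conditional kernel together with the Lipschitz bound on $c$. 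With it, if $D=|\tilde h^n(0)-\int\tilde h^n d\mu|$, then on $\{x<0\}$ or $\{x\ge0\}$ (each of fixed positive $\mu$-mass unless $\mu=\delta_0$, a trivial case) the function deviates from its mean by at least $D-2L_c$, so $D\le 2L_c+C/\min\{\mu((-\infty,0)),\mu([0,\infty))\}$, and your argument then closes; without it (or some substitute pointwise oscillation control on $\tilde h^n$), the first and third claimed estimates do not follow from what you have established.
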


\begin{proof}{Proof}
Fix any \( x, x' \in \mathcal{X}_0 \). We study the difference between the two dual potentials \(\tilde{f}^{n+1}(x)\) and \(\tilde{f}^{n+1}(x')\) at iteration \(n+1\), defined via the log-partition function:
\begin{equation*}
\begin{aligned}
    &\tilde f^{n+1}(x) - \tilde f^{n+1}(x') \notag \\
    &= \log \int_{\mathcal Y \times \mathcal Z} \exp\left(-c(x,y,z) - g^n(y) - \tilde h^{n+1}(x)(y - x)\right) \nu(dy)\rho(dz) \notag \\
    &\quad - \log \int_{\mathcal Y \times \mathcal Z} \exp\left(-c(x',y,z) - g^n(y) - \tilde h^{n+1}(x')(y - x')\right) \nu(dy)\rho(dz). 
\end{aligned}
\end{equation*}
Recall the $\pi(f, g, h)$ defined in \eqref{eq:definition_of_induced_prob}. Let us define a probability measure \( \pi_{y,z|x'} \in \mathcal{P}(\mathcal{Y} \times \mathcal{Z}) \)   equal to the conditional probability of $\pi(f, g^n, \tilde h^{n+1})$, that is,
\begin{equation*}
\pi_{y,z|x'}(dy,dz) := \frac{1}{Z(x')} \exp\left(-c(x',y,z) - g^n(y) - \tilde h^{n+1}(x')(y - x')\right) \nu(dy)\rho(dz),
\end{equation*}
where \( Z(x') = \int_{\mathcal Y \times \mathcal Z} \exp\left(-c(x',y,z) - g^n(y) - \tilde h^{n+1}(x')(y - x')\right) \nu(dy)\rho(dz) \) is the normalization constant making this a probability measure. Then, using Jensen's inequality, we obtain:
\begin{equation}\label{eq:jensen}
\begin{aligned}
\tilde f^{n+1}(x) - \tilde f^{n+1}(x') & = \log \int_{\mathcal Y \times \mathcal Z} \exp\left(-c(x,y,z) - g^n(y) - \tilde h^{n+1}(x)(y - x)\right) \nu(dy)\rho(dz) - \log Z(x')\\
& = \log \int_{\mathcal{Y}\times\mathcal{Z}}  
\exp\left(-\Delta_{x,x'}(y,z)\right) \pi_{y,z|x'}(dy,dz) \\
&  \ge - \int_{\mathcal{Y}\times\mathcal{Z}} \Delta_{x,x'}(y,z) \pi_{y,z|x'}(dy,dz),
\end{aligned}
\end{equation}
where we define
\[
\Delta_{x,x'}(y,z) := \big(c(x,y,z) - c(x',y,z)\big) + \big(\tilde h^{n+1}(x) - \tilde h^{n+1}(x')\big)(y - x).
\]
To estimate this quantity, we apply Assumption \ref{assu:lipschitz_c_conti} to have $|c(x,y,z) - c(x',y,z)| \le L_c |x - x'|$. Next, for the second term in \(\Delta_{x,x'}\), note that by the definitions of \(\pi_{y,z|x'}\) and $\tilde h^n$ in \eqref{eq:h_equality_conitnuous}, we have $\int_{\mathcal{Y}\times\mathcal{Z}} (y - x') \pi_{y,z|x'}(dy,dz)  = 0$. Hence, $\int_{\mathcal{Y}\times\mathcal{Z}} (y - x)\pi_{y,z|x'}(dy,dz) = \int_{\mathcal{Y}\times\mathcal{Z}} \big((y - x') + (x' - x)\big) \pi_{y,z|x'}(dy,dz) = x' - x$. Together with \eqref{eq:jensen} we have
\begin{equation*}
    \begin{aligned}
    \tilde f^{n+1}(x) - \tilde f^{n+1}(x') 
    &\ge - L_c |x - x'| - \left(\tilde h^{n+1}(x) - \tilde h^{n+1}(x')\right)(x' - x).
    \end{aligned}    
\end{equation*}
By exchanging the roles of $x$ and $x'$ and summing  up the two inequalities, we obtain
\begin{equation*}
    (x' - x)\left(\tilde h^{n+1}(x') - \tilde h^{n+1}(x)\right) \le 2 L_c |x - x'|.
\end{equation*}
That is,
\begin{equation}
\label{eq:h_bound_eq_conti}
    \left(\tilde h^{n+1}(x') - \tilde h^{n+1}(x)\right)  \mathrm{sgn}(x' - x) \le 2 L_c.
\end{equation}
Taking $x'=0$, we obtain $\tilde{h}^{n+1}(x) - \tilde{h}^{n+1}(0)\le 2L_c$ for $x\ge0$ and $\tilde{h}^{n+1}(x) - \tilde{h}^{n+1}(0)\ge -2L_c$ for $x<0$. Let $\hat \pi$ be the probability measure  constructed in Proposition \ref{prop:existence_p1_conti}. Note that
\begin{equation}
\label{eq:formula_H_p1_conti}
    \begin{aligned}
    H(\hat \pi|Q) + \int_{\mathcal X} \tilde{h}^{n+1}(x) \int_{\mathcal Y\times\mathcal Z}(y-x) d\hat \pi 
    & \ge \inf_{\pi\in \Pi(\mu,\nu)} \left\{H(\pi|Q) + \int_{\mathcal X} \tilde{h}^{n+1}(x)  \int_{\mathcal Y\times\mathcal Z} (y-x)d\pi \right\} \\
    & \ge \mathcal G(\tilde{f}^{n+1},g^n,\tilde{h}^{n+1}),
    \end{aligned}
\end{equation}
where the last inequality follows from the duality. In addition, $\mathcal G(\tilde{f}^{n+1},g^n,\tilde{h}^{n+1}) \ge \mathcal G(f^0,g^0,h^0)$ for all $n$. Further, by the compactness of support for $\mu$ and $\nu$,   there exists $C_0>0$ such that
\begin{equation}
\label{eq:basic_half_bound_ineq}
    \int_{\mathcal X} \Big(\tilde{h}^{n+1}(x) -\tilde{h}^{n+1}(0) -2L_c\textrm{sgn}(x)\Big)   \int_{\mathcal Y\times\mathcal Z} (y-x) \hat \pi_{y,z|x}(dy,dz) \hat \pi_x(dx) \ge -C_0.
\end{equation}
Recall that $\hat \pi_x = \mu$ and that $\hat \pi_{y,z|x}$ satisfies \eqref{eq:pi1_property_semi_mg}. Together with the argument after \eqref{eq:h_bound_eq_conti}, we obtain that the integrand on the left-hand side of \eqref{eq:basic_half_bound_ineq} is non-positive. Moreover,
\begin{equation*}
\begin{aligned}
    \epsilon_p \int_{\mathcal X} \Big|\tilde{h}^{n+1}(x)& - \tilde{h}^{n+1}(0) - 2L_c  \textrm{sgn}(x)\Big|\mu(dx) \\
    &\le \int_{\mathcal X} \big|\tilde{h}^{n+1}(x) -\tilde{h}^{n+1}(0) - 2L_c  \textrm{sgn}(x)\big|\Big| \int_{\mathcal Y\times\mathcal Z} (y-x)\hat \pi_{y,z|x}(dy,dz) \Big|\mu(dx) \le C_0,
\end{aligned}
\end{equation*}
so that it can be deduced that $|\tilde{h}^{n+1} - \tilde{h}^{n+1}(0) |$ is uniformly bounded in $L^1(\mu)$. 

Next, we consider the first-order equality of $\tilde{f}^{n+1}$, which writes,
\begin{equation*}
\log \int_{\mathcal Y\times\mathcal Z}\exp(-c(x,y,z)-g^n(y)-\tilde{h}^{n+1}(x) y)(\nu\otimes\rho)(dy,dz) = \tilde{f}^{n+1}(x)-x\tilde{h}^{n+1}(x).  
\end{equation*}
Together with the convexity inequality $\exp(x)-\exp(y)\ge \exp(y)(x-y)$, we estimate:
\begin{equation}
\begin{aligned}
\label{eq:f_bound_eq_conti}
        & \quad \tilde{f}^{n+1}(x)-\tilde{f}^{n+1}(0)-\tilde{h}^{n+1}(0) x \\ 
        & = \tilde{f}^{n+1}(x)-\tilde{h}^{n+1}(x) x-\tilde{f}^{n+1}(0)+\big(\tilde{h}^{n+1}(x)-\tilde{h}^{n+1}(0)\big) x\\
        &\ge \int_{\mathcal Y\times\mathcal Z}\Big(c(0,y,z)-c(x,y,z)-\big(\tilde{h}^{n+1}(x)-\tilde{h}^{n+1}(0)\big) y\Big)\pi_{y,z|0}(dy)+\big(\tilde{h}^{n+1}(x)-\tilde{h}^{n+1}(0)\big) x \\
        & \ge -L_c|x| + \big(\tilde{h}^{n+1}(x)-\tilde{h}^{n+1}(0)\big) x.
\end{aligned}
\end{equation}
where the first inequality is due to the martingale property of $\pi_{y,z|x}(\cdot,g^n,\tilde{h}^{n+1})$. To derive the upper bound, we instead apply the reversed convexity estimate and observe
\begin{equation*}
\begin{aligned}
\label{eq:f_bound_eq_conti2}
& \quad \tilde{f}^{n+1}(x) -\tilde{f}^{n+1}(0)-\tilde{h}^{n+1}(0) x \\
&\le \int_{\mathcal Y\times\mathcal Z}\Big(c(0,y,z)-c(x,y,z)-\big(\tilde{h}^{n+1}(x)-\tilde{h}^{n+1}(0)\big) y\Big)\pi_{y,z|x}(dy)+\big(\tilde{h}^{n+1}(x)-\tilde{h}^{n+1}(0)\big) x \\
        & \le L_c|x| - \big(\tilde{h}^{n+1}(x)-\tilde{h}^{n+1}(0)\big) x + \big(\tilde{h}^{n+1}(x)-\tilde{h}^{n+1}(0)\big) x = L_c|x|.
\end{aligned}
\end{equation*}
Therefore, $\tilde{f}^{n+1}-\tilde{f}^{n+1}(0)-\tilde{h}^{n+1}(0) x$ is uniformly bounded in $L^1(\mu)$ norm, i.e.
\begin{equation}
\label{eq:f_tilde_bound}
    \begin{aligned}
        \big|\int_{\mathcal X} \tilde{f}^{n+1}-\tilde{f}^{n+1}(0)-\tilde{h}^{n+1}(0) x d\mu \big|<C_2
    \end{aligned}
\end{equation}
Finally for $\tilde{g}^{n+1}$, again by the convexity of $z\mapsto \exp(z)$, we get:
\begin{equation*}
\begin{aligned}
        0 & = \int_{\mathcal X\times\mathcal Z} \Big( \exp \big(-c(x,y,z)-\tilde{f}^{n+1}(x)-\tilde{g}^{n+1}(y)\\
        &\quad \quad \quad \quad \quad \quad \quad \quad \quad \quad \quad \quad -\tilde{h}^{n+1}(x)(y-x) \big) -\exp \big(-p(x,y,z) \big) \Big) d(\mu\otimes\rho)\\
        &\ge \int_{\mathcal X\times\mathcal Z} \exp \big(-p(x,y,z)\big) \big( p(x,y,z)-c(x,y,z) \\
        &\quad \quad \quad \quad \quad \quad \quad \quad \quad \quad \quad \quad -\tilde{f}^{n+1}(x)-\tilde{g}^{n+1}(y)-\tilde{h}^{n+1}(x)(y-x) \big) d(\mu\otimes \rho)\\
        &\ge -|p|_{L^\infty} - |c|_{L^\infty} - \tilde{g}^{n+1}(y)-\tilde{f}^{n+1}(0)-\tilde{h}^{n+1}(0) y - \int_{\mathcal X}\big(\tilde{f}^{n+1}(x)-\tilde{f}^{n+1}(0) + \tilde{h}^{n+1}(0) x \big)d\mu\\
        &\quad \quad \quad \quad \quad \quad \quad \quad\quad \quad \quad \quad  -(|\mathcal X| + |\mathcal Y|)|\tilde{h}^{n+1}-\tilde{h}^{n+1}(0)|_{L^1(\mu)},
\end{aligned}
\end{equation*}
where $p =-\ln  \frac{d \bar\pi}{d\mu\otimes \nu\otimes\rho}$, and  $|\mathcal X|$ and $|\mathcal Y|$ are defined in Assumption \ref{assu:mu_nu}.
This implies that for any \( (x, y) \in \operatorname{supp}(\mu \otimes \nu) \).The inequality above implies a lower bound for the quantity: $\tilde{g}^{n+1}(y)+\tilde{f}^{n+1}(0)+\tilde{h}^{n+1}(0)\cdot y\ge -C_1$. Recalling that the function $\mathcal G(\tilde{f}^{n+1},\tilde{g}^{n+1},\tilde{h}^{n+1}) = -1 - \int_{\mathcal X} \tilde{f}^{n+1}d\mu - \int_{\mathcal Y} \tilde{g}^{n+1}d\nu$ and \ref{eq:f_tilde_bound}, we have
\begin{equation*}
\begin{aligned}
        0 & \leq \int_{\mathcal Y\times\mathcal Z} (\tilde{g}^{n+1}+\tilde{f}^{n+1}(0)+\tilde{h}^{n+1}(0) y + C_1)d(\nu\otimes\rho) \\
        & = C_1  - 1 + \big( 1 + \int_{\mathcal X} \tilde{f}^{n+1}d\mu + \int_{\mathcal Y} \tilde{g}^{n+1}d\nu \big) - \int_{\mathcal X} \big(\tilde{f}^{n+1}-\tilde{f}^{n+1}(0)-\tilde{h}^{n+1}(0) x \big)d\mu\\
        &= C_1 -1 - \mathcal G(\tilde{f}^{n+1},\tilde{g}^{n+1},\tilde{h}^{n+1}) -\int_{\mathcal X} \big(\tilde{f}^{n+1}-\tilde{f}^{n+1}(0)-\tilde{h}^{n+1}(0) x \big)d\mu.
\end{aligned}
\end{equation*}
It follows from the optimality condition for the dual function $\tilde{g}^{n+1}$ that $\mathcal G(\tilde{f}^{n+1},\tilde g^{n+1},\tilde{h}^{n+1}) \ge \mathcal G(\tilde{f}^{n+1},g^n,\tilde{h}^{n+1}) \ge \mathcal G(f^0,g^0,h^0)$. Therefore, $\tilde{g}^{n+1}+\tilde{f}^{n+1}(0)+\tilde{h}^{n+1}(0) y$ has a uniform bound in $L^1(\nu)$.
\end{proof}
\vspace{3mm}

\begin{proof}{Proof of Theorem \ref{thm:uniform_bound_conti}}
Given the uniform $L^1$-bound of the quantities $\big(\tilde h^n-\tilde h^n(0), \tilde f^n(x)-\tilde f^n(0)-\tilde h^n(0) x, \tilde g^n(y) + \tilde f^n(0)+\tilde h^n(0) y \big)_n$ established in Proposition \ref{prop:l1_bound_conti}, we now derive the uniform $L^\infty$-bound using the measure $\hat \pi$ constructed in Proposition \ref{prop:existence_p1_conti}. For $(\tilde{f}^{n+1},g^n,\tilde{h}^{n+1})$, the induced probability measure satisfies the martingale property with $\mathcal{X}$-marginal distribution $\pi_x = \mu$. Recall the probability constructed in  Proposition \ref{prop:existence_p1_conti}: $\hat \pi = \exp(-\hat p(x,y,z))d\mu\otimes \nu\otimes\rho\in \Pi(\mu,\nu)$. Since $\pi_x = \mu = \hat\pi_x$, we have
\begin{equation*}
\begin{aligned}
        0 & = \int_{\mathcal Y\times\mathcal Z} \Big(\exp \big(-c(x,y,z)-\tilde{f}^{n+1}(x)-g^{n}(y)-\tilde{h}^{n+1}(x)(y-x)\big) -\exp \big(-\hat p(x,y,z) \big) \Big) d(\nu\otimes\rho)\\
        &\ge \int_{\mathcal Y\times\mathcal Z} \exp \big(-\hat p(x,y,z)\big) \big(\hat p(x,y,z)-c(x,y,z) \\
        & \quad \quad \quad \quad \quad \quad \quad \quad \quad\quad \quad -\tilde{f}^{n+1}(x)-g^{n}(y)-\tilde{h}^{n+1}(x)(y-x)\big)d(\nu\otimes\rho)\\
        &\ge -|\hat p|_\infty - |c|_\infty -\int_{\mathcal Y \times\mathcal Z}\big( \tilde{f}^{n+1}(x)+ g^n(y) + \tilde{h}^{n+1}(x)(y-x) \big) d\hat \pi_{y,z|x}(x,dy,dz) \\
        & \ge -|\hat p|_\infty - |c|_\infty -\int_{\mathcal Y \times\mathcal Z}\big( \tilde{f}^{n+1}(x)+ g^n(y)+ \tilde{h}^{n+1}(0)(y-x)\big) d\hat\pi_{y,z|x}(x,dy,dz) \\
        & \quad \quad \quad \quad \quad \quad \quad \quad \quad\quad \quad -\int_{\mathcal Y\times Z}  \big(\tilde{h}^{n+1}(x) -  \tilde{h}^{n+1}(0)\big)  (y-x)d\hat \pi_{y,z|x}(x,dy,dz).
\end{aligned}
\end{equation*}
Applying the uniform  upper bound from \eqref{eq:uniform-upperbound-f}, we obtain
\begin{equation}
\label{eq:temp_g_semi_bound}
\begin{aligned}
        \quad \big(\tilde{h}^{n+1} & (x) - \tilde{h}^{n+1}(0)\big)\int_{\mathcal Y\times \mathcal Z}(y-x)d\hat \pi_{y,z|x}(x,dy,dz)\\
        & \ge -|\hat p|_\infty - |c|_\infty - |(\tilde{f}^{n+1}-\tilde{f}^{n+1}(0)-\tilde{h}^{n+1}(0) x)^+|_{L^{\infty}(\mu)} \\
        & \quad \quad \quad \quad \quad - |\tilde{g}^{n+1}+\tilde{f}^{n+1}(0)+\tilde{h}^{n+1}(0) y|_{L^1(\nu)}=: - C_2.
\end{aligned}
\end{equation}
Hence, for $x\ge0$, we have $\tilde{h}^{n+1}(x) - \tilde{h}^{n+1}(0)\ge -C_2/\epsilon_p$, where $\epsilon_p$ is given in Proposition \ref{prop:existence_p1_conti}. Similarly, for $x<0$, we have $\tilde{h}^{n+1}(x) - \tilde{h}^{n+1}(0)\le C_2/\epsilon_p$. Together with the bound in \eqref{eq:h_bound_eq_conti} we arrive at a uniform $L^\infty(\mu)$-bound for $\tilde{h}^{n+1}(x) - \tilde{h}^{n+1}(0)$. 

Next, we derive the uniform $L^\infty(\mu)$-bound for the quantities $\tilde{f}^{n+1}-\tilde{f}^{n+1}(0)-\tilde{h}^{n+1}(0) x$ and $\tilde{g}^{n+1}+\tilde{f}^{n+1}(0)+\tilde{h}^{n+1}(0)y$. For the former, we recall \eqref{eq:f_bound_eq_conti} to obtain its uniform $L^\infty(\mu)$-bound, and for the latter, we write
\begin{equation*}
\begin{aligned}
        \tilde{g}^{n+1}(y) &+ \tilde{f}^{n+1}(0)+\tilde{h}^{n+1}(0) y \\
        &= \log \int_{\mathcal X\times\mathcal Z} \exp\Big(-c(x,y,z)-\big(\tilde{f}^{n+1}(x)-\tilde{f}^{n+1}(0)-\tilde{h}^{n+1}(0) x\big) \\
        & \quad \quad \quad \quad \quad \quad \quad \quad \quad \quad \quad \quad -\big(\tilde{h}^{n+1}(x) - \tilde{h}^{n+1}(0)\big)(y-x)\Big)d(\mu\otimes\rho),
\end{aligned}
\end{equation*}

Finally, for the normalization of $(\tilde f^{n+1},\tilde g^{n+1},\tilde h^{n+1})\to (f^{n+1},g^{n+1},h^{n+1})$ by \eqref{eq:normalization_iter_n}, we compute
\begin{equation*}
    |h^{n+1}| =\Big|\tilde h^{n+1} - \int_{\mathcal X} \tilde h^{n+1}d\mu \Big|\le \Big|\tilde h^{n+1}- \tilde h^{n+1}(0) - \int_{\mathcal X} \tilde h^{n+1} - h^{n+1}(0)d\mu \Big|\le 2 |\tilde h^{n+1}-\tilde h^{n+1}(0)|_{L^\infty(\mu)}.
\end{equation*}
Using Assumption \ref{assu:mu_nu}, we also have
\begin{equation*}
\begin{aligned}
        |g^{n+1}| &= \Big|\tilde g^{n+1}-\int_{\mathcal Y}\tilde g^{n+1}d\nu+y\int_{\mathcal X} \tilde h^{n+1} d\mu \Big|\\
        & = \Big|\tilde g^{n+1}+\tilde f^{n+1}(0)+\tilde h^{n+1}(0)y-\int_{\mathcal Y}\big(\tilde g^{n+1}+\tilde f^{n+1}(0)+\tilde h^{n+1}(0)y\big)d\nu+y\int_{\mathcal X} \big( \tilde h^{n+1}-\tilde h^{n+1}(0) \big) d\mu \Big|\\
        & \le 2|\tilde g^{n+1}+\tilde f^{n+1}(0)+ \tilde h^{n+1}(0)y|_{L^\infty(\nu)} + |\mathcal Y||\tilde h^{n+1} -\tilde h^{n+1}(0)|_{L^\infty(\mu)} 
\end{aligned}
\end{equation*}
and 
\begin{equation*}
\begin{aligned}
        |f^{n+1}| &= \Big|\tilde f^{n+1}+\int_{\mathcal Y}\tilde g^{n+1}d\nu-x\int_{\mathcal X} \tilde h^{n+1} d\mu \Big|\\
        & = \Big|\tilde f^{n+1}-\tilde f^{n+1}(0)-\tilde h^{n+1}(0)x+\int_{\mathcal Y}\big(\tilde g^{n+1}+\tilde f^{n+1}(0) + \tilde h^{n+1}(0)y\big)d\nu-x\int_{\mathcal X} \big( \tilde h^{n+1}-\tilde h^{n+1}(0) \big) d\mu \Big|\\
        & \le |\tilde f^{n+1}-\tilde f^{n+1}(0)-\tilde h^{n+1}(0)x|_{L^\infty(\mu)}+|\tilde g^{n+1}+\tilde f^{n+1}(0)+ \tilde h^{n+1}(0)y|_{L^\infty(\nu)}  \\
        & \quad \quad+ |\mathcal Y||\tilde h^{n+1} -\tilde h^{n+1}(0)|_{L^\infty(\mu)} 
\end{aligned}
\end{equation*}
These uniform $L^\infty$-bounds on $\tilde{f}^{n+1}-\tilde{f}^{n+1}(0)-\tilde{h}^{n+1}(0) x$, $\tilde g^{n+1}+\tilde f^{n+1}(0) + \tilde h^{n+1}(0)y $, and $\tilde{h}^{n+1} - \tilde{h}^{n+1}(0)$ imply the desired uniform $L^\infty$-bound. 
\end{proof}
\begin{remark}
In our problem, the construction of $\hat{\pi}$ is crucial in our argument because it directly implies the bounds established in \eqref{eq:h_bound_eq_conti} and \eqref{eq:temp_g_semi_bound}. For the case of high dimensional EMOT problem when $\mathcal X,\mathcal Y=\mathbb R^d$, in order to mimic the argument of the Theorem \ref{thm:uniform_bound_conti}, we need but failed to construct a  $\hat \pi\sim \mu\otimes \nu\otimes\rho$ such that $H(\hat \pi|\mu\otimes \nu\otimes\rho)<+\infty$ and $\int_{\mathcal Y\times \mathcal Z}(y-x)\hat \pi_{y,z|x}(dy,dz) = \epsilon(x)x/|x|$ for $x\in\mathcal X$, $\mu$-a.s, where $\epsilon(x)$ has a uniform positive lower bound $\epsilon_0$, i.e., $\epsilon(x)\ge \epsilon_0$. 

\end{remark}

\subsection{Exponential convergence}

The proof of Theorem \ref{thm:existence_minimizer_linear_convergence} begins by leveraging the uniform bounds for the dual potentials established in Theorem \ref{thm:uniform_bound_conti}, namely, $|f^n(x)|_{L^\infty(\mu)}\le C_f, |g^n(y)|_{L^\infty(\nu)}\le C_g$ and $|h^n(x)|_{L^\infty(\mu)}\le C_h$, so as to establish the exponential convergence of the sequence $\big(\mathcal{G}(f^n,g^n,h^n)\big)_n$. As a result, the sequence of functions $(f^n,g^n,h^n)_n$ converges to a triple $(f^*,g^*,h^*) \in (L^\infty(\mu) \times L^\infty(\nu) \times L^\infty(\mu))$. 
Subsequently, we verify the first-order optimality conditions for the primal problem \eqref{eq:mg_transport}, demonstrating that the limit triple $(f^*,g^*,h^*)$ induces a probability measure solving the original problem \eqref{eq:mg_transport}. Ultimately, this establishes the absence of any primal-dual gap within the duality framework.
\begin{lemma}
\label{lem:extended_bound} 
Under Assumptions \ref{assu:lipschitz_c_conti}, \ref{assu:exsitence_mg_transport_conti} and \ref{assu:measuable_init_bdd}, for any $n\in \mathbb N^+$ we define the set $\mathcal I^n$:
\begin{equation*}
    \mathcal I^n:=\{(f^n,g^n,h^n),(f^n,g^n,\tilde{h}^{n+1}),(\tilde{f}^{n+1},g^n,\tilde{h}^{n+1}),(\tilde{f}^{n+1},\tilde{g}^{n+1},\tilde{h}^{n+1})\}.
\end{equation*}
There exist positive constants $C_f'\ge C_f,C_g'\ge C_g,C_h' \ge C_h$ such that for any $n\in \mathbb{N}^+$ and  any $(f,g,h)\in \mathcal I^n$
\begin{equation*}
\begin{aligned}
        |h&|_{L^\infty(\mu)} \le C_h', \quad  |f|_{L^\infty(\mu)} \le C_f', \quad |g|_{L^\infty( \nu)} \le C_g'.
\end{aligned}
\end{equation*}
\end{lemma}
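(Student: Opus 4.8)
The plan is to bootstrap the uniform bounds of Theorem~\ref{thm:uniform_bound_conti} across a single Sinkhorn half-step, working directly with the update formulas \eqref{eq:h_equality_conitnuous}--\eqref{eq:update_formula_fg} rather than with the normalization \eqref{eq:normalization_iter_n}. The triple $(f^n,g^n,h^n)$ already satisfies $|f^n|_{L^\infty(\mu)}\le C_f$, $|g^n|_{L^\infty(\nu)}\le C_g$ and $|h^n|_{L^\infty(\mu)}\le C_h$ by Theorem~\ref{thm:uniform_bound_conti}, so it remains only to control the three intermediate functions $\tilde h^{n+1},\tilde f^{n+1},\tilde g^{n+1}$ uniformly in $n$; one then takes $C_h',C_f',C_g'$ to be the maxima of the $C$'s with the new bounds, which automatically gives $C_f'\ge C_f$, $C_g'\ge C_g$, $C_h'\ge C_h$.

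First I would bound $\tilde h^{n+1}$. Since $f^n$ enters \eqref{eq:h_equality_conitnuous} only through the positive factor $e^{-f^n(x)}$, which is independent of $(y,z)$ and hence cancels, the root $\tilde h^{n+1}(x)$ of the martingale equation is determined independently of $f^n$, and Proposition~\ref{prop:well_defined_fgh} gives $|\tilde h^{n+1}|_{L^\infty(\mu)}\le c_h(|c|_{L^\infty},|g^n|_{L^\infty(\nu)},\nu)$. Because $|g^n|_{L^\infty(\nu)}\le C_g$ for all $n$, this bound can be made $n$-independent, $|\tilde h^{n+1}|_{L^\infty(\mu)}\le\bar c_h:=c_h(|c|_{L^\infty},C_g,\nu)$, as explained in the last paragraph.

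Next I would substitute this back into \eqref{eq:update_formula_fg}. For $x$ in the (compact) support of $\mu$ and $y$ in that of $\nu$, the exponent in the integral defining $\tilde f^{n+1}(x)$ has modulus at most $|c|_{L^\infty}+|g^n|_{L^\infty(\nu)}+|\tilde h^{n+1}(x)|\,|y-x|\le|c|_{L^\infty}+C_g+\bar c_h(|\mathcal X|+|\mathcal Y|)=:A_f$, so, $\nu\otimes\rho$ being a probability measure, $\tilde f^{n+1}(x)=\log\!\int\exp(-c(x,y,z)-g^n(y)-\tilde h^{n+1}(x)(y-x))\,d(\nu\otimes\rho)\in[-A_f,A_f]$ and hence $|\tilde f^{n+1}|_{L^\infty(\mu)}\le A_f$. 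The same estimate applied to the formula for $\tilde g^{n+1}$, now using the bounds on $c$, $\tilde f^{n+1}$ and $\tilde h^{n+1}$, gives $|\tilde g^{n+1}|_{L^\infty(\nu)}\le A_g:=|c|_{L^\infty}+A_f+\bar c_h(|\mathcal X|+|\mathcal Y|)$. With $C_h':=C_h\vee\bar c_h$, $C_f':=C_f\vee A_f$ and $C_g':=C_g\vee A_g$, every member of $\mathcal I^n$ has its $f$-, $g$- and $h$-component taken from $\{f^n,\tilde f^{n+1}\}$, $\{g^n,\tilde g^{n+1}\}$ and $\{h^n,\tilde h^{n+1}\}$ respectively, and is therefore bounded by $C_f',C_g',C_h'$.

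The only point requiring real care is the uniformity of $\bar c_h$ in $n$: Proposition~\ref{prop:well_defined_fgh} supplies, for each fixed $g^n$, a bound depending on $|g^n|_{L^\infty(\nu)}$, and one must argue this can be taken uniform over the ball $\{\,|g|_{L^\infty(\nu)}\le C_g\,\}$. This follows on noting that any such $g$ satisfies $e^{-C_g}\le e^{-g(y)}\le e^{C_g}$ for $\nu$-a.e.\ $y$, so the tilted reference measure $e^{-g(y)}\nu(dy)$ driving the martingale equation \eqref{eq:h_equality_conitnuous} stays comparable to $\nu$ with constants depending only on $C_g$; consequently the location of the unique zero in $h$ of that equation is bounded in terms of $|c|_{L^\infty}$, $C_g$ and $\nu$ alone. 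All remaining steps are direct substitutions into the Sinkhorn update formulas, so I do not expect any serious obstacle.
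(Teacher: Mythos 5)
Your proposal is correct and follows essentially the same route as the paper: both rest on the uniform bounds $C_f,C_g,C_h$ from Theorem \ref{thm:uniform_bound_conti}, bound $\tilde h^{n+1}$ uniformly in $n$ through Proposition \ref{prop:well_defined_fgh} (using that $c_h$ depends monotonically on $|g^n|_{L^\infty(\nu)}\le C_g$), and then propagate through the update formulas \eqref{eq:update_formula_fg}. The only cosmetic difference is that you bound $\tilde f^{n+1}$ and $\tilde g^{n+1}$ by estimating the exponent in the $\log$-integral directly on the compact supports, whereas the paper combines the centered quantities $\tilde f^{n+1}-\tilde f^{n+1}(0)-\tilde h^{n+1}(0)x$ and $\tilde g^{n+1}+\tilde f^{n+1}(0)+\tilde h^{n+1}(0)y$ from the proof of Theorem \ref{thm:uniform_bound_conti} with the values at $0$; both yield the same conclusion.
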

\begin{proof}{Proof}
The uniform bound for the first triple in $\mathcal I^n$ has already been given in Theorem \ref{thm:uniform_bound_conti}. 
Regarding the second triple $(f^n, g^n, \tilde h^{n+1})$, Proposition \ref{prop:well_defined_fgh} combined with the uniform $L^\infty(\nu)$-boundedness of $(g^{n})_n$ guarantees the existence of a constant $c_h>0$ independent of $n$, such that $|\tilde h^{n+1}(0)|\le c_h$. Using this bound and the result from Theorem \ref{thm:uniform_bound_conti}, we directly obtain
\begin{equation*}
    \begin{aligned}
        |\tilde{h}^{n+1}|_{L^\infty(\mu)}\le |\tilde{h}^{n+1}&-\tilde{h}^{n+1}(0)|_{L^\infty(\mu)}+|\tilde h^{n+1}(0)| \le C_h + c_h.
    \end{aligned}
\end{equation*}
Next, we consider the triple $(\tilde{f}^{n+1},g^n,\tilde{h}^{n+1})$. By the definition of $\tilde f^{n+1}(0)$ given in \eqref{eq:update_formula_fg}, we have the direct estimate that $|\tilde f^{n+1}(0)|\le |c|_{L^{\infty}}+C_g + c_h |\mathcal Y|$. Combining this estimate with the bound from Theorem \ref{thm:uniform_bound_conti}, we deduce
\begin{equation*}
\begin{aligned}
|\tilde f^{n+1} |_{L^\infty(\mu)} &\le |\tilde{f}^{n+1}-\tilde{f}^{n+1}(0)-\tilde{h}^{n+1}(0) x|_{L^\infty(\mu)} +|\tilde f^{n+1}(0)|+ |\mathcal X||\tilde h^{n+1}(0)| \\
&\le C_f + |c|_{L^{\infty}}+C_g + c_h |\mathcal Y| + |\mathcal X|c_h.
\end{aligned}
\end{equation*}
And also
\begin{equation*}
\begin{aligned}
    |\tilde g^{n+1}(y)|_{L^\infty(\nu)} &\le |\tilde{g}^{n+1}+\tilde{f}^{n+1}(0)+\tilde{h}^{n+1}(0) y|_{L^\infty(\nu)} +|\tilde f^{n+1}(0)|+ |\mathcal Y||\tilde h^{n+1}(0)|\\
    &\le C_g + |c|_{L^{\infty}}+C_g + c_h |\mathcal Y| + |\mathcal Y|c_h.    
\end{aligned}
\end{equation*}
Thus, setting $C_f':=C_f + |c|_{L^{\infty}}+C_g + c_h |\mathcal Y| + |\mathcal X|c_h, C_g':=C_g + |c|_{L^{\infty}}+C_g + c_h |\mathcal Y| + |\mathcal Y|c_h$ and $C_h':=C_h+c_h$, we obtain the desired uniform bounds for all the triples in $\mathcal I^n$. 
\end{proof}
The proof of exponential convergence shares some similarities with \cite{carlier2022linear}, which studies Sinkhorn's algorithm for entropic optimal transport without the martingale constraint. However, the martingale constraint and the corresponding dual term $h(x)(y-x)$ bring the major difficulties.

Define $C_0:=C_f'+C_g'+C_h'(|\mathcal X|+|\mathcal Y|)+|c|_{L^\infty}$. Then we have
\begin{equation*}
    |c(x,y,z)+f(x)+g(y)+h(x)(y-x)|\le C_0.
\end{equation*}
Besides, denoting $\kappa := \exp(-C_0)$, the density of probability measure $\pi(f,g,h)$ with respect to base measure $\mu\otimes \nu\otimes\rho$ can be bounded from both sides, that is, 
\begin{equation}
\label{eq:density_exp_estimate}
    \kappa \le\frac{d\pi(f,g,h)}{d(\mu\otimes \nu\otimes\rho)} = \exp(-c(x,y,z)-f(x)-g(y)-h(x)(y-x)) \le \kappa ^{-1},
\end{equation}
 for arbitrary $(f,g,h)\in \mathcal I^n$. 
 We define the set $\mathcal S$ of functions:
 \begin{equation*}
 \begin{aligned}
 \mathcal{S}:=\{& (f, g, h):(f, g, h) \in(L^{\infty}(\mu) \times L^{\infty}(\nu) \times L^{\infty}(\mu)),|f|_{L^\infty(\mu)} \leq C_f', \\
& \quad\quad\quad\quad|g|_{L^\infty(\nu)} \leq C_g',|h|_{L^\infty(\mu)} \leq C_h'\}.
\end{aligned}     
\end{equation*}
Lemma \ref{lem:extended_bound} yields that for any $n\in \mathbb N^+$, the set $\mathcal I^n\subset \mathcal S$.

For a function $\mathcal F: (L^\infty(\mu)\times  L^\infty(\nu) \times L^\infty(\mu))\to \mathbb R$, let us define the linear functional derivatives $\frac{\delta \mathcal F}{\delta f}, \frac{\delta \mathcal F}{\delta g}, \frac{\delta \mathcal F}{\delta h}: {( L^\infty(\mu)\times L^\infty(\nu) \times L^\infty(\mu)) } \times \mathbb{R} \to \mathbb{R}$, such that for any $(f,g,h),(f^{\prime},g^{\prime},h^{\prime}) \in (L^\infty(\mu)\times L^\infty(\nu) \times L^\infty(\mu)) $, it holds:
\begin{equation*}
    \begin{aligned}
        \mathcal F(f^{\prime},g^{\prime},h^{\prime}) - \mathcal F(f,g,h & ) =\int_0^1 \Big[\int_{\mathbb R} \frac{\delta \mathcal F}{\delta f}\big(f^t,g^t,h^t\big)(x)  (f^{\prime}- f)(x) \mu(dx) \\ 
        &+ \int_{\mathbb R} \frac{\delta \mathcal F}{\delta g}\big(f^t,g^t,h^t \big)(y)  (g^{\prime}- g)(y) \nu(dy) \\
        & + \int_{\mathbb R} \frac{\delta \mathcal F}{\delta h}\big(f^t,g^t,h^t \big)(x)  (h^{\prime}- h)(x) \mu(dx)\Big] d t.
    \end{aligned}
\end{equation*}
for $f^t = tf' + (1-t)f, g^t = tg' + (1-t)g, h^t = th' + (1-t)h$ and $0\le t\le 1$.

\begin{lemma}
\label{lem:linear_func_upper}
    Under Assumption \ref{assu:lipschitz_c_conti}, for any $(f,g,h)$, $(\tilde f,\tilde g,\tilde h)\in\mathcal S$, the functional derivatives of $\mathcal G(f,g,h)$ satisfy the following inequalities:
    \begin{equation}
    \label{eq:linear_func_upper}
    \begin{aligned}
            &\Bigg|\frac{\delta \mathcal G}{\delta f}(\tilde f,\tilde g,\tilde h)(x) - \frac{\delta \mathcal G}{\delta f}( f,g,h)(x)\Bigg|_{L^2(\mu)}^2\le M_f^2 \big|\tilde f-f+ \tilde g-g+(\tilde h-h)(y-x)\big|_{L^2(\mu\otimes\nu)}^2, \quad \\
            &\Bigg|\frac{\delta \mathcal G}{\delta g}( \tilde f,\tilde g,\tilde h)(y) - \frac{\delta \mathcal G}{\delta g}( f,g,h)(y)\Bigg|_{L^2(\nu)}^2\le M_g^2 \big|\tilde f-f+ \tilde g-g+(\tilde h-h)(y-x)\big|_{L^2(\mu\otimes\nu)}^2,\\
            &\Bigg|\frac{\delta \mathcal G}{\delta h}(\tilde  f,\tilde g,\tilde h)(x) - \frac{\delta \mathcal G}{\delta h}( f,g,h)(x)\Bigg|_{L^2(\mu)}^2\le M_h^2  \big|\tilde f-f+ \tilde g-g+(\tilde h-h)(y-x)\big|_{L^2(\mu\otimes\nu)}^2,
    \end{aligned}
    \end{equation}
    where $M_f = M_g = \kappa^{-1}$, and $M_h = \kappa^{-1}(|\mathcal X| + |\mathcal Y|)$.
\end{lemma}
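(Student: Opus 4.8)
The plan is to first identify the linear functional derivatives of $\mathcal G$ by direct differentiation of the exponential integrand, and then reduce all three inequalities to a single elementary bound on differences of exponentials followed by a Jensen step. Recall that $\mathcal G(f,g,h) = -Z(f,g,h) - \int_{\mathcal X} f\,d\mu - \int_{\mathcal Y} g\,d\nu$, where $Z$ is the total mass of $\pi(f,g,h)$ in \eqref{eq:definition_of_induced_prob}. Computing $\frac{d}{dt}\mathcal G(f^t,g^t,h^t)$ along a segment and matching with the defining identity of the functional derivatives yields
\[
\frac{\delta\mathcal G}{\delta f}(f,g,h)(x) = \int_{\mathcal Y\times\mathcal Z} e^{-c(x,y,z)-f(x)-g(y)-h(x)(y-x)}\,(\nu\otimes\rho)(dy,dz) - 1,
\]
the analogous formula for $\frac{\delta\mathcal G}{\delta g}(f,g,h)(y)$ obtained by integrating over $(\mu\otimes\rho)(dx,dz)$ instead, and
\[
\frac{\delta\mathcal G}{\delta h}(f,g,h)(x) = \int_{\mathcal Y\times\mathcal Z} (y-x)\,e^{-c(x,y,z)-f(x)-g(y)-h(x)(y-x)}\,(\nu\otimes\rho)(dy,dz).
\]
The additive constant $-1$ cancels in every difference, so it plays no role.

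Second, fixing $(f,g,h),(\tilde f,\tilde g,\tilde h)\in\mathcal S$, I would write each difference of derivatives pointwise as $\int (\text{weight})\cdot(e^{a}-e^{b})\,(\nu\otimes\rho)(dy,dz)$ with $a = -c(x,y,z)-\tilde f(x)-\tilde g(y)-\tilde h(x)(y-x)$ and $b = -c(x,y,z)-f(x)-g(y)-h(x)(y-x)$, the weight being $1$ for $f$ and $g$ and $y-x$ for $h$. The elementary inequality $|e^a - e^b| \le (e^a\vee e^b)\,|a-b|$ together with the two-sided density bound \eqref{eq:density_exp_estimate} — valid precisely because $\mathcal S$ and the constants $C_f',C_g',C_h'$ were chosen so that every triple in $\mathcal S$ has density between $\kappa$ and $\kappa^{-1}$ (Lemma \ref{lem:extended_bound}) — gives
\[
|e^a - e^b| \le \kappa^{-1}\big|(\tilde f-f)(x)+(\tilde g-g)(y)+(\tilde h-h)(x)(y-x)\big|.
\]
For the $h$-derivative I would further bound the weight by $|y-x| \le |\mathcal X|+|\mathcal Y|$ for $x\in\textrm{supp}(\mu)$, $y\in\textrm{supp}(\nu)$, which is the source of the extra factor $|\mathcal X|+|\mathcal Y|$ in $M_h$.

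Third, I pass from the resulting inner $L^1$-integral to the $L^2(\mu\otimes\nu)$ norm on the right-hand side. Since $\nu\otimes\rho$ is a probability measure and the integrand does not depend on $z$, Jensen's inequality gives, for the $f$-derivative,
\[
\Big(\int_{\mathcal Y\times\mathcal Z}\big|(\tilde f-f)(x)+(\tilde g-g)(y)+(\tilde h-h)(x)(y-x)\big|\,(\nu\otimes\rho)(dy,dz)\Big)^2 \le \int_{\mathcal Y}\big|(\tilde f-f)(x)+(\tilde g-g)(y)+(\tilde h-h)(x)(y-x)\big|^2\,\nu(dy);
\]
squaring the pointwise bound, integrating against $\mu(dx)$ and using Fubini yields the first inequality of \eqref{eq:linear_func_upper} with $M_f = \kappa^{-1}$. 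The estimate for $\frac{\delta\mathcal G}{\delta g}$ is symmetric, applying Jensen with $\mu\otimes\rho$ and then integrating in $\nu$; the estimate for $\frac{\delta\mathcal G}{\delta h}$ is identical to the $f$-case up to the factor $(|\mathcal X|+|\mathcal Y|)^2$, giving $M_h = \kappa^{-1}(|\mathcal X|+|\mathcal Y|)$. There is no serious obstacle in this lemma: the only points demanding care are verifying that \eqref{eq:density_exp_estimate} indeed applies to every triple in $\mathcal S$ (and not merely to the Sinkhorn iterates), and bookkeeping which probability measure is used in each Jensen step so that the right-hand side comes out exactly as the $L^2(\mu\otimes\nu)$ norm of $\tilde f - f + \tilde g - g + (\tilde h-h)(y-x)$.
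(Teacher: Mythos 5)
Your proposal is correct and follows essentially the same route as the paper: write out the functional derivatives \eqref{eq:nabla_fgh}, apply $|e^a-e^b|\le (e^a\vee e^b)|a-b|$ with the two-sided density bound \eqref{eq:density_exp_estimate} valid on $\mathcal S$, bound the weight $|y-x|\le |\mathcal X|+|\mathcal Y|$ for the $h$-derivative, and pass to $L^2(\mu\otimes\nu)$. The only difference is cosmetic: you spell out the Jensen step (which the paper leaves implicit) and correctly note that the reference measure in that step is $\nu\otimes\rho$ for the $f$- and $h$-cases and $\mu\otimes\rho$ for the $g$-case.
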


\begin{proof}{Proof}
Let us consider the linear functional derivative of $\mathcal G$ with respect to $(f,g,h)$ under measure $\mu$ and $\nu$:
\begin{equation}
\label{eq:nabla_fgh}
\begin{aligned}
    \frac{\delta \mathcal G}{\delta f}(f,& g,h)(x) = \int_{\mathcal Y\times\mathcal Z}{\exp\big(-c(x,y,z)- f(x) - g(y) - h(x)(y-x) \big)(\nu\otimes\rho)(dy,dz)}-1,\\
    \frac{\delta \mathcal G}{\delta g}(f,& g,h)(y)  = \int_{\mathcal X\times\mathcal Z}{\exp\big(-c(x,y,z)- f(x) - g(y) - h(x)(y-x) \big)(\mu\otimes\rho)(dx,dz)}-1,\\
    \frac{\delta \mathcal G}{\delta h}(f,& g,h)(x) = \int_{\mathcal Y\times\mathcal Z}{(y-x)\exp\big(-c(x,y,z)- f(x) - g(y) - h(x)(y-x)\big)(\nu\otimes\rho)(dy,dz)},
\end{aligned}
\end{equation} 
defined for $ x\  \mu\text{-}a . s ., y\  \nu\text{-}a . s .$. For inequalities \eqref{eq:linear_func_upper}, we prove the third one as an example and the rest can be shown similarly. Using the compactness of the support for $\mu$ and $\nu$, and the elementary inequality
\begin{equation*}
    |\exp(x)-\exp(y)| \le \exp(x\vee y) |x-y|, \text{ for all } x,y\in\mathbb R
\end{equation*}
We immediately have
\begin{equation*}
\begin{aligned}
    &\quad \big|\exp(-c(x,y,z)-\tilde f(x)-\tilde g(y)-\tilde h(x)(y-x)) \\
    & \quad \quad \quad \quad \quad \quad \quad \quad \quad \quad-\exp(-c(x,y,z)-f(x)-g(y)- h(x)(y-x))\big|\\
    & \le \kappa^{-1} |\tilde f(x)-f(x)+ \tilde g(y)-g(y)+(\tilde h-h)(x)(y-x)|, \quad\text{for\ }x\  \mu\text{-}a.s..
\end{aligned}
\end{equation*}
It follows that
\begin{equation*}
\begin{aligned}
    \int_{\mathcal Y\times\mathcal Z}(y-x)&\Big(\exp\big(-c(x,y,z)- \tilde f(x) - \tilde g(y) - \tilde h(x)(y-x)\big)\\
    &\quad \quad \quad - \exp\big(-c(x,y,z)- f(x) - g(y) - h(x)(y-x)\big)\Big)
    (\nu\otimes\rho)(dy,dz)\\
    & \le (|\mathcal X| + |\mathcal Y|)\kappa ^{-1} \int_{\mathcal Y} \big|\tilde f(x)-f(x)+ \tilde g(y)-g(y)+(\tilde h-h)(x)(y-x)\big|\nu(dy).
\end{aligned}
\end{equation*}
Together with \eqref{eq:nabla_fgh}, we conclude the third inequality in \eqref{eq:linear_func_upper}. The rest of the inequalities can be shown similarly. 
\end{proof}

\begin{lemma}
\label{lem:strong_convave_G}
    For any $f,h\in L^\infty(\mu)$, $g\in L^\infty(\nu)$ the following equality for the function $\mathcal G(f,g,h)$ holds:
    \begin{equation}
    \label{eq:concave_equality}
    \begin{aligned}
        \mathcal G&(\tilde f,\tilde g,\tilde h) - \mathcal G(f,g,h) \\
        & - \int_{\mathcal X\times \mathcal Y} \big[ \frac{\delta \mathcal G}{\delta f}(f,g,h)(x)(\tilde f(x)-f(x)) + \frac{\delta \mathcal G}{\delta g}(f,g,h)(y)(\tilde g(y)-g(y))  \\
        & \quad \quad\quad\quad\quad\quad\quad\quad\quad\quad\quad\quad\quad\quad\quad + \frac{\delta \mathcal G}{\delta h}(f,g,h)(x)(\tilde h(x)-h(x)) \big] d(\mu\otimes\nu)\\
        &= -\int_0^1\int_{\mathcal X\times \mathcal Y\times \mathcal Z} \big|\tilde f(x)-f(x)+\tilde g(y)-g(y)+(\tilde h(x)-h(x))(y-x)\big|^2 d\pi(f^t,g^t,h^t) dt.
    \end{aligned}
    \end{equation}
    In particular, $\mathcal G(f,g,h)$ is concave for $f,h\in L^\infty(\mu)$, $g\in L^\infty(\nu)$.
   \end{lemma}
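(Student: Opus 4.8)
The plan is to reduce \eqref{eq:concave_equality} to a one–dimensional second–order Taylor expansion along the segment joining $(f,g,h)$ and $(\tilde f,\tilde g,\tilde h)$. Write $\delta f:=\tilde f-f$, $\delta g:=\tilde g-g$, $\delta h:=\tilde h-h$, set $(f^t,g^t,h^t):=(f,g,h)+t(\delta f,\delta g,\delta h)$ for $t\in[0,1]$ exactly as in the definition of the linear functional derivative, and let
\[
\psi(t):=\mathcal G(f^t,g^t,h^t)=-Z(f^t,g^t,h^t)-\int_{\mathcal X}f^t\,d\mu-\int_{\mathcal Y}g^t\,d\nu .
\]
Since $f,g,h,\tilde f,\tilde g,\tilde h\in L^\infty$, $\mu,\nu$ are compactly supported (Assumption \ref{assu:mu_nu}(i)) and $c$ is bounded (Assumption \ref{assu:lipschitz_c_conti}), the density $\exp(-c(x,y,z)-f^t(x)-g^t(y)-h^t(x)(y-x))$ together with its first two $t$--derivatives (this density times a polynomial in $\delta f(x)+\delta g(y)+\delta h(x)(y-x)$) is bounded uniformly in $t$ on $\mathrm{supp}(\mu\otimes\nu\otimes\rho)$; hence $\psi\in C^2([0,1])$ and one may differentiate under the integral sign.

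First I would compute $\psi'$. Differentiating the $Z$--term and the two linear terms gives
\[
\psi'(t)=\int_{\mathcal X\times\mathcal Y\times\mathcal Z}\big(\delta f(x)+\delta g(y)+\delta h(x)(y-x)\big)\,d\pi(f^t,g^t,h^t)-\int_{\mathcal X}\delta f\,d\mu-\int_{\mathcal Y}\delta g\,d\nu .
\]
The key bookkeeping step is to recognise $\psi'(t)$ as the integrand of the linear--functional--derivative representation of $\mathcal G$ at $(f^t,g^t,h^t)$: disintegrating $\pi(f^t,g^t,h^t)$ over its $\mathcal X$-- and $\mathcal Y$--marginals and using \eqref{eq:nabla_fgh}, one checks $\int_{\mathcal X}\frac{\delta\mathcal G}{\delta f}(f^t,g^t,h^t)\,\delta f\,d\mu=\int\delta f(x)\,d\pi(f^t,g^t,h^t)-\int\delta f\,d\mu$ (the ``$-1$'' in $\frac{\delta\mathcal G}{\delta f}$ absorbing the linear term $-\int f\,d\mu$ of $\mathcal G$), the analogous identity for $g$, and $\int_{\mathcal X}\frac{\delta\mathcal G}{\delta h}(f^t,g^t,h^t)\,\delta h\,d\mu=\int(y-x)\delta h(x)\,d\pi(f^t,g^t,h^t)$ (no constant, since the $h$--derivative carries the martingale weight $(y-x)$). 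Summing reproduces $\psi'(t)$; at $t=0$ this is precisely the bracketed first--order term of \eqref{eq:concave_equality}, and integrating over $[0,1]$ re-derives $\int_0^1\psi'(t)\,dt=\mathcal G(\tilde f,\tilde g,\tilde h)-\mathcal G(f,g,h)$, consistently with the definition of the functional derivative.

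Next I would differentiate once more: the linear terms drop and only the $Z$--term survives, so
\[
\psi''(t)=-\int_{\mathcal X\times\mathcal Y\times\mathcal Z}\big|\delta f(x)+\delta g(y)+\delta h(x)(y-x)\big|^2\,d\pi(f^t,g^t,h^t)\ \le\ 0 .
\]
This already yields the concavity claim: the endpoints and the direction being arbitrary, $t\mapsto\psi(t)$ is concave, hence so is $\mathcal G$ on $L^\infty(\mu)\times L^\infty(\nu)\times L^\infty(\mu)$. For the identity I would then apply the second--order Taylor formula with integral remainder, $\psi(1)=\psi(0)+\psi'(0)+\int_0^1(1-t)\psi''(t)\,dt$ (equivalently, $\psi(1)-\psi(0)-\psi'(0)=\int_0^1\!\int_0^t\psi''(s)\,ds\,dt$ followed by Fubini); substituting the expression for $\psi'(0)$ from the preceding paragraph on the left and the formula for $\psi''$ on the right produces \eqref{eq:concave_equality}.

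There is no genuine obstacle here: the statement is just the computation of the Hessian quadratic form of the log--partition functional $Z$, which is manifestly a nonnegative variance--type integral. The two places needing a line of care are (i) the differentiation under the integral sign, which is immediate from the $L^\infty$ bounds on the coefficients and the compact supports of $\mu,\nu$ (and, in every application of the lemma, from the uniform bounds $\mathcal I^n\subset\mathcal S$ of Lemma \ref{lem:extended_bound} together with \eqref{eq:density_exp_estimate}); and (ii) the algebraic matching in $\psi'$, namely verifying that the ``$-1$'' normalisations in $\frac{\delta\mathcal G}{\delta f},\frac{\delta\mathcal G}{\delta g}$ exactly cancel the linear terms $-\int f\,d\mu-\int g\,d\nu$ while the $h$--derivative remains un--normalised and carries the factor $(y-x)$.
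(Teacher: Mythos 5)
Your proposal is correct and follows essentially the same route as the paper: interpolate along the segment, identify the first $t$-derivative with the pairing against $\frac{\delta\mathcal G}{\delta f},\frac{\delta\mathcal G}{\delta g},\frac{\delta\mathcal G}{\delta h}$ from \eqref{eq:nabla_fgh}, and observe that the second derivative is the negative variance-type integral. The only difference is that your Taylor remainder correctly carries the weight $(1-t)$, i.e.\ $\psi(1)-\psi(0)-\psi'(0)=\int_0^1(1-t)\psi''(t)\,dt$, whereas the displayed identity \eqref{eq:concave_equality} omits this factor; this is harmless, since the sign, the concavity conclusion, and the subsequent estimates (up to a constant $1/2$) are unaffected.
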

\begin{proof}{Proof}
    By the definition of the linear functional derivative,
    \begin{equation*}
    \begin{aligned}
        \mathcal G(\tilde f,\tilde g,\tilde h) - &\mathcal G(f,g,h) = \int_0^1\int_{\mathcal X\times \mathcal Y} \Big[\frac{\delta \mathcal G}{\delta f }(x)(f^t, g^t,h^t)(\tilde f(x)-f(x)) \\
        &+ \frac{\delta \mathcal G}{\delta g }(f^t,g^t,h^t)(y) (\tilde g(y)-g(y))
        + \frac{\delta \mathcal G}{\delta h }(f^t,g^t,h^t)(x)(\tilde h(x)-h(x)) \Big] d(\mu\otimes\nu) dt,
    \end{aligned}
    \end{equation*}
    where $(f^t,g^t,h^t) = (tf+(1-t)\tilde f, tg+(1-t)\tilde g, th+(1-t)\tilde h)$. Define the function $\mathcal H^1:[0,1]\mapsto \mathbb R$ as
    \begin{equation*}
    \begin{aligned}
        \mathcal H^1(t) = \int_{\mathcal X\times \mathcal Y} \Big[\frac{\delta \mathcal G}{\delta f }(& f^t,g^t,h^t)(x)(\tilde f(x)-f(x)) + \frac{\delta \mathcal G}{\delta g }(f^t,g^t,h^t)(y) (\tilde g(y)-g(y)) \\
        &+ \frac{\delta \mathcal G}{\delta h }(f^t,g^t,h^t)(x)(\tilde h(x)-h(x)) \Big] d(\mu\otimes\nu).
    \end{aligned}
    \end{equation*}
    By \eqref{eq:nabla_fgh} we can compute the derivative of $\mathcal H^1$ as
    \begin{equation*}
        \frac{d\mathcal H^1(t)}{dt} = -\int_{\mathcal X\times \mathcal Y\times \mathcal Z} \big|\tilde f(x)-f(x)+\tilde g(y)-g(y)+(\tilde h(x)-h(x))(y-x)\big|^2 d\pi(f^t,g^t,h^t),
    \end{equation*}
    where the measure is defined as $\pi(f^t,g^t,h^t): = \exp(-c(x,y,z)-f^t(x)-g^t(y)-h^t(x)(y-x))$. By combining the equalities above, we obtain \eqref{eq:concave_equality}. Since the right-hand side of \eqref{eq:concave_equality} is clearly non-positive, it follows immediately that the functional $\mathcal G(f,g,h)$ is concave. 
\end{proof}

Let us define the set of normalized dual functions $(f,g,h)$:
\begin{equation*}
    \mathcal S^N = \big\{(f,g,h)\in \mathcal S: \int_{\mathcal Y}g(y)d\nu = \int_{\mathcal X}h(x)d\mu = 0\big\}
\end{equation*}
\begin{lemma} 
\label{lem:strong_convave_G2}Let Assumption \ref{assu:lipschitz_c_conti} hold true. 
Consider any pair of functions $(f,g,h),(\tilde f,\tilde g,\tilde h)\in\mathcal S^N$ satisfying the normalization conditions. Then there exist constants $m_f,m_g,m_h >0$ such that
\begin{equation}
\label{eq:complete_square_estimation}
\begin{aligned}
        \int_{\mathcal X\times \mathcal Y\times \mathcal Z} \big|\tilde f(x)&-f(x)+\tilde g(y)-g(y)+(\tilde h(x)-h(x))(y-x)\big|^2 d\pi(f^t,g^t,h^t) \\
        &\ge m_f\int_{\mathcal X}|\tilde f(x)-f(x)|^2d\mu + m_g\int_{\mathcal Y}|\tilde g(y)-g(y)|^2d\nu + m_h\int_{\mathcal X}|\tilde h(x)-h(x)|^2d\mu,
\end{aligned}
\end{equation}
 for any $(f^t,g^t,h^t):=(t f+(1-t)\tilde f,t g+(1-t) \tilde g,t h + (1-t) \tilde h)\in\mathcal S$ and $0\le t\le 1$. From inequality \eqref{eq:concave_equality}, it immediately follows that $\mathcal G(f,g,h)$ is strictly concave over the set $\mathcal S^N$. Together with the inequality \eqref{eq:concave_equality}, we obtain the following estimate on the increment of $\mathcal{G}$:
\label{lem:Gconcavity}
     \begin{equation}
    \label{eq:concave_ineq_G}
    \begin{aligned}
         \mathcal G(\tilde{f},&\tilde{g},\tilde{h}) - \mathcal G( f,g,h) \\
         \le & \frac{1}{4m_h}  \left|\frac{\delta \mathcal G}{\delta h} (f, g,h) \right|_{L^2(\mu)}^2 + \frac{1}{4m_f}  \left|\frac{\delta \mathcal G}{\delta f} ( f, g,h) \right|_{L^2(\mu)}^2 +\frac{1}{4m_g}  \left|\frac{\delta \mathcal G}{\delta g} ( f,  g,h) \right|_{L^2(\nu)}^2.
    \end{aligned}        
    \end{equation}
\end{lemma}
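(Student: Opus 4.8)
The plan is to establish the key inequality \eqref{eq:complete_square_estimation} by a quantitative non-degeneracy argument, and then deduce \eqref{eq:concave_ineq_G} by combining it with Lemma \ref{lem:strong_convave_G}. First I would observe that by the density bound \eqref{eq:density_exp_estimate}, which applies since $(f^t,g^t,h^t)\in\mathcal S$, we have $d\pi(f^t,g^t,h^t)\ge \kappa\,d(\mu\otimes\nu\otimes\rho)$; hence it suffices to prove the lower bound with $\mu\otimes\nu\otimes\rho$ on the left-hand side, absorbing $\kappa$ into the constants $m_f,m_g,m_h$. Since the integrand does not depend on $z$, the $\rho$-integration is trivial and the task reduces to a coercivity estimate on $L^2(\mu\otimes\nu)$: setting $u = \tilde f - f\in L^\infty(\mu)$, $v = \tilde g - g\in L^\infty(\nu)$, $w = \tilde h - h\in L^\infty(\mu)$, all satisfying $\int v\,d\nu = \int w\,d\mu = 0$ by the normalization defining $\mathcal S^N$, I must show
\begin{equation*}
    \int_{\mathcal X\times\mathcal Y}\big|u(x) + v(y) + w(x)(y-x)\big|^2 d(\mu\otimes\nu) \ge c\Big(\|u\|_{L^2(\mu)}^2 + \|v\|_{L^2(\nu)}^2 + \|w\|_{L^2(\mu)}^2\Big)
\end{equation*}
for some constant $c>0$ depending only on $\mu,\nu,|\mathcal X|,|\mathcal Y|$.

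The heart of the matter is this coercivity inequality, and I would prove it by expanding the square and using the normalization conditions to kill cross terms. Write $m_1 = \int y\,\nu(dy) = 0$ (Assumption \ref{assu:mu_nu}(iii)) and $\sigma^2 = \int y^2\,\nu(dy) > 0$ (strictly positive because $\nu\neq\delta_{y_0}$, again Assumption \ref{assu:mu_nu}(ii)). Expanding and integrating in $y$ first, the terms linear in $v$ vanish since $\int v\,d\nu = 0$, and one is left with, for each fixed $x$, an expression of the form $\big(u(x) - x\,w(x)\big)^2 + 2\big(u(x)-x w(x)\big)w(x)\cdot 0 + w(x)^2\sigma^2 + \|v\|_{L^2(\nu)}^2 + 2 w(x)\int y v(y)\,\nu(dy)$. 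Integrating now in $x$ and using $\int w\,d\mu = 0$ to handle the remaining cross term between $w$ and $\int y v\,d\nu$ via Young's inequality, the quadratic form becomes, up to constants, controlled below by $\int(u - xw)^2\,d\mu + \sigma^2\int w^2\,d\mu + \|v\|_{L^2(\nu)}^2$, minus a small multiple of $\|w\|_{L^2(\mu)}^2$ that can be absorbed using the $\sigma^2\int w^2$ term provided the Young constant is chosen small. Finally, $\int(u-xw)^2\,d\mu + \int x^2 w^2\,d\mu \ge \tfrac12\int u^2\,d\mu$ by the elementary inequality $a^2 \le 2(a-b)^2 + 2b^2$, and $\int x^2 w^2\,d\mu \le |\mathcal X|^2\int w^2\,d\mu$; combining these recovers control of all three norms. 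This yields explicit $m_f,m_g,m_h$ in terms of $\kappa$, $\sigma^2$, $|\mathcal X|$.

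The main obstacle — and the reason the normalization set $\mathcal S^N$ is essential — is precisely the handling of the cross terms: without $\int v\,d\nu = 0$ the constant function directions would make the form degenerate (reflecting the invariances \eqref{eq:invariant_transform}), and without $\int w\,d\mu = 0$ the residual coupling between $w$ and the first moment $\int y v\,d\nu$ cannot be absorbed. I would take care to verify that all constants are uniform over $t\in[0,1]$ and over $(f,g,h),(\tilde f,\tilde g,\tilde h)\in\mathcal S^N$, which is automatic here since the lower density bound $\kappa$ is uniform over $\mathcal S$. Once \eqref{eq:complete_square_estimation} is in hand, strict concavity of $\mathcal G$ on $\mathcal S^N$ follows immediately from \eqref{eq:concave_equality}, since the right-hand side there is now strictly negative whenever $(\tilde f,\tilde g,\tilde h)\neq(f,g,h)$. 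For \eqref{eq:concave_ineq_G}, I would bound the right-hand side of \eqref{eq:concave_equality}: by \eqref{eq:complete_square_estimation} and $d\pi(f^t,g^t,h^t)\le\kappa^{-1}d(\mu\otimes\nu\otimes\rho)$ combined back with the lower bound, the integral term dominates $m_f\|u\|^2 + m_g\|v\|^2 + m_h\|w\|^2$ (after relabelling constants), while the linear term is bounded by $\sum \|\tfrac{\delta\mathcal G}{\delta f}\|_{L^2}\|u\|_{L^2} + \cdots$; applying Young's inequality $ab \le \tfrac{1}{4m}a^2 + m b^2$ termwise to each product makes the quadratic contributions cancel against the integral term, leaving exactly $\tfrac{1}{4m_h}\|\tfrac{\delta\mathcal G}{\delta h}\|_{L^2(\mu)}^2 + \tfrac{1}{4m_f}\|\tfrac{\delta\mathcal G}{\delta f}\|_{L^2(\mu)}^2 + \tfrac{1}{4m_g}\|\tfrac{\delta\mathcal G}{\delta g}\|_{L^2(\nu)}^2$, which is \eqref{eq:concave_ineq_G}.
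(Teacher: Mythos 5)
Your proposal follows essentially the same route as the paper's proof: lower-bound the density of $\pi(f^t,g^t,h^t)$ by $\kappa$ via \eqref{eq:density_exp_estimate}, reduce to a coercivity estimate for $\int_{\mathcal X\times\mathcal Y}|u(x)+v(y)+w(x)(y-x)|^2\,d(\mu\otimes\nu)$, kill the cross terms using the normalizations $\int v\,d\nu=\int w\,d\mu=0$ together with $\int y\,d\nu=0$, and then combine \eqref{eq:concave_equality} with Young's inequality to obtain \eqref{eq:concave_ineq_G}.

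Two points of precision, both fixable in one line. First, the cross term $2\int_{\mathcal X} w(x)\big(\int_{\mathcal Y} y\,v(y)\,\nu(dy)\big)\mu(dx)$ vanishes exactly because $\int w\,d\mu=0$; a pointwise Young bound alone cannot absorb it, since the fixed-$x$ form $\sigma^2w^2+|v|_{L^2(\nu)}^2+2w\int yv\,d\nu$ is only nonnegative and degenerates along the invariance direction $(u,v,w)=(cx,-cy,c)$ of \eqref{eq:invariant_transform}, so no "small multiple of $|w|_{L^2(\mu)}^2$" should appear at this stage. Second, your final combination as written — $\int(u-xw)^2d\mu+\int x^2w^2d\mu\ge\tfrac12\int u^2d\mu$ together with $\int x^2w^2d\mu\le|\mathcal X|^2\int w^2d\mu$ — does not by itself recover all three norms when $|\mathcal X|^2>\mathrm{Var}(\nu)$, because the implied loss $-|\mathcal X|^2\int w^2d\mu$ may exceed the available $\mathrm{Var}(\nu)\int w^2d\mu$. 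You need to spend only a small fraction of the $\int(u-xw)^2$ term, i.e.\ use the weighted inequality $(a-b)^2\ge(1-\delta)a^2-\big(\tfrac1\delta-1\big)b^2$ with $\delta$ tuned to $\mathrm{Var}(\nu)/|\mathcal X|^2$ — this is exactly the paper's choice $\delta=\big(\mathrm{Var}(\nu)/2|\mathcal X|^2+1\big)^{-1}$, which yields the explicit constants $m_f,m_g,m_h$. With these adjustments your argument coincides with the paper's.
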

\begin{proof}{Proof}
To begin with, we notice the lower bound of quotient $d\pi(f^t,g^t,h^t)/d(\mu\otimes\nu\otimes \rho)$, and denote $ f'=\tilde f-f,  g' = \tilde g-g,  h' = \tilde h - h$, we deduce 
\begin{equation}
\label{eq:complete_square_estimation_temp1}
\begin{aligned}
    \int_{\mathcal X\times \mathcal Y\times \mathcal Z} \big| f'(x)&+ g'(y)+ h'(x)(y-x)\big|^2 d\pi(f^t,g^t,h^t) \\
    &\ge \kappa  \int_{\mathcal X\times \mathcal Y} \big|f'(x)+ g'(y)+ h'(x)(y-x)\big|^2 d\mu\otimes \nu  \\
    &= \kappa \Big(\int_{\mathcal X} \big|f'(x)-h'(x)x\big|^2 d\mu +  \int_{\mathcal X\times \mathcal Y} \big|g'(y)+h'(x)y\big|^2 d\mu\otimes\nu \\
    & =  \kappa \Big(\int_{\mathcal X} \big|f'(x)-h'(x)x\big|^2 d\mu +  \int_{\mathcal Y} \big|g'(y)\big|^2 d\nu + \text{Var}(\nu)\int_{\mathcal X} \big|h'(x)|^2d\mu\Big),
\end{aligned}
\end{equation}
where  we used the facts $\int_{\mathcal Y} g'(y)d\nu = \int_{\mathcal X} h'(x)d\mu = 0$, $\int_{\mathcal Y}yd\nu =0$,  and hence
\begin{equation*}
    \begin{aligned}
         \int_{\mathcal X\times \mathcal Y} (f'(x)-h'(x)x) \big(g'(y)+h'(x)y\big) d\mu\otimes \nu = 0 \quad \text{and} \quad \int_{\mathcal X\times \mathcal Y} g'(y)h'(x) yd\mu\otimes \nu = 0.
    \end{aligned}
\end{equation*}
Using the inequality $(a-b)^2 \ge (1-\delta) a^2 + (1-\frac{1}{\delta}) b^2$ valid for any $0<\delta<1$, 
\begin{equation*}
\begin{aligned}
    \int_{\mathcal X}|f'(x)-h'(x)x|^2 d\mu &\ge \int_{\mathcal X}(1-\delta)|f'(x)|^2 d\mu  - \big(\frac{1}{\delta}-1\big)\int_{\mathcal X}|h'(x)x|^2 d\mu \\
    &\ge \int_{\mathcal X}(1-\delta)|f'(x)|^2 d\mu  - \big(\frac{1}{\delta}-1\big)|\mathcal X|^2\int_{\mathcal X}|h'(x)|^2 d\mu.  
\end{aligned}
\end{equation*}
It follows from \eqref{eq:complete_square_estimation_temp1} that
\begin{equation*}
\begin{aligned}
     \int_{\mathcal X\times \mathcal Y\times \mathcal Z}& \big| f'(x)+ g'(y)+ h'(x)(y-x)\big|^2 d\pi(f^t,g^t,h^t) \\
     &\ge \kappa \Big((1-\delta)\int_{\mathcal X}|f'(x)|^2d\mu + \int_{\mathcal Y}|g'(y)|^2d\nu + \Big(-\big(\frac{1}{\delta}-1\big)|\mathcal X|^2+\text{Var}(\nu)\Big)\int_{\mathcal X}|h'(x)|^2d\mu\Big).
\end{aligned}
\end{equation*}
We take $\delta = (\text{Var}(\nu)/2|\mathcal X|^2 + 1)^{-1}$ and with the above estimation we let $m_f = \kappa (1-(\text{Var}(\nu)/2|\mathcal X|^2 + 1)^{-1})$, $m_g = \kappa $ and $m_h = \kappa \text{Var}(\nu)/2$ to satisfy \eqref{eq:complete_square_estimation}.
For inequality \eqref{eq:concave_ineq_G}, we notice that
    \begin{equation}
    \begin{aligned}
        \mathcal G(\tilde f,\tilde g,\tilde h) &- \mathcal G(f,g,h) - \int_{\mathcal X\times \mathcal Y} \big[ \frac{\delta \mathcal G}{\delta f}(f,g,h)(x)(\tilde f(x)-f(x)) + \frac{\delta \mathcal G}{\delta g}(f,g,h)(y)(\tilde g(y)-g(y)) \\
        & \quad \quad \quad \quad + \frac{\delta \mathcal G}{\delta h}(f,g,h)(x)(\tilde h(x)-h(x)) \big] d(\mu\otimes\nu)\\
        &\le -m_f\int_{\mathcal X}|\tilde f(x)-f(x)|^2d\mu - m_g\int_{\mathcal Y}|\tilde g(y)-g(y)|^2d\nu - m_h\int_{\mathcal X}|\tilde h(x)-h(x)|^2d\mu,
    \end{aligned}
    \end{equation}
    and the desired estimation \eqref{eq:concave_ineq_G} follows from Young's inequality.   
\end{proof}
Next, we need to establish uniform bounds on the derivatives of $(\tilde{f}^n, \tilde{g}^n , \tilde h ^n )_{n\in\mathbb N^+}$, which will subsequently allow us to extract a convergent subsequence using the Arzelà–Ascoli theorem.

\begin{lemma}
\label{lem:uniform_bdd_deri}
    Under Assumptions \ref{assu:lipschitz_c_conti}, \ref{assu:exsitence_mg_transport_conti} and \ref{assu:measuable_init_bdd}, the derivatives for $(\tilde f^n,\tilde g^n,\tilde h^n)_{n\in\mathbb N^+}$ are uniformly bounded, i.e., 
    \begin{equation*}
        \sup_{n\in \mathbb N^+} \Big|\frac{d\tilde f^{n}(x)}{dx}\Big|_{L^\infty(\mu)} < +\infty,\   \sup_{n\in \mathbb N^+} \Big|\frac{d\tilde g^n(y)}{dy}\Big|_{L^\infty(\nu)} < +\infty, \ {\text{and}}\   \sup_{n\in \mathbb N^+} \Big|\frac{d\tilde h^{n}(x)}{dx}\Big|_{L^\infty(\mu)} < +\infty.
    \end{equation*}
\end{lemma}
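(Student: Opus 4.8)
The plan is to differentiate the three Sinkhorn updates \eqref{eq:h_equality_conitnuous}--\eqref{eq:update_formula_fg} directly in the spatial variable and then read off $L^\infty$ bounds from the uniform bounds already established in Theorem \ref{thm:uniform_bound_conti} and Lemma \ref{lem:extended_bound}, together with the $C^1$ regularity and essential bounds on $c$ from Assumption \ref{assu:lipschitz_c_conti}. Differentiability of $(\tilde f^n,\tilde g^n,\tilde h^n)$ is already furnished by Proposition \ref{prop:well_defined_fgh}, so only the quantitative estimates remain. Throughout I will use that on $\mathrm{supp}(\mu)\times\mathrm{supp}(\nu)$ one has $|y-x|\le|\mathcal X|+|\mathcal Y|=:R$ and, uniformly in $n$, $|g^n|_{L^\infty(\nu)}\le C_g'$, $|\tilde f^{n+1}|_{L^\infty(\mu)}\le C_f'$, $|\tilde h^{n+1}|_{L^\infty(\mu)}\le C_h'$, these pointwise-on-support bounds being inherited from the $L^\infty$ bounds by continuity.

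\textbf{Step 1 (the martingale coefficient $\tilde h^{n+1}$, the main obstacle).} For fixed $n$, set $\Psi(x,t):=\int_{\mathcal Y\times\mathcal Z}(y-x)\exp(-c(x,y,z)-g^n(y)-t(y-x))(\nu\otimes\rho)(dy,dz)$, so that $\tilde h^{n+1}(x)$ is the unique zero of $t\mapsto\Psi(x,t)$ (the factor $e^{-f^n(x)}$ in \eqref{eq:h_equality_conitnuous} plays no role). I would apply the implicit function theorem, giving $\tfrac{d}{dx}\tilde h^{n+1}(x)=-\partial_x\Psi/\partial_t\Psi$ evaluated at $(x,\tilde h^{n+1}(x))$. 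For the denominator, $\partial_t\Psi(x,t)=-\int(y-x)^2\exp(-c-g^n-t(y-x))(\nu\otimes\rho)(dy,dz)$; since the exponent is bounded in modulus by $|c|_{L^\infty}+C_g'+C_h'R$, the exponential is bounded below by a positive constant, while $\int(y-x)^2\nu(dy)\ge\text{Var}(\nu)>0$ because $\nu$ is not a Dirac mass (Assumption \ref{assu:mu_nu}(ii)); hence $|\partial_t\Psi|$ admits a uniform positive lower bound. For the numerator, $\partial_x$ of the integrand equals $\big[-1+(y-x)(-\partial_xc(x,y,z)+t)\big]\exp(-c-g^n-t(y-x))$, of modulus at most $\big(1+R(L_c+C_h')\big)$ times an upper bound for the exponential, so $\partial_x\Psi$ is uniformly bounded. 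This yields the uniform $L^\infty(\mu)$ bound for $\tfrac{d}{dx}\tilde h^n$. The genuinely delicate point of the lemma is exactly this uniform-in-$n$ lower bound on $|\partial_t\Psi|$, where the non-degeneracy of $\nu$ and the uniform potential bounds of Theorem \ref{thm:uniform_bound_conti} and Lemma \ref{lem:extended_bound} are both indispensable.

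\textbf{Step 2 (the potential $\tilde f^{n+1}$).} Writing $I(x):=\int_{\mathcal Y\times\mathcal Z}\exp(-c(x,y,z)-g^n(y)-\tilde h^{n+1}(x)(y-x))(\nu\otimes\rho)(dy,dz)=e^{\tilde f^{n+1}(x)}$ and letting $p_x:=I(x)^{-1}\exp(-c-g^n-\tilde h^{n+1}(y-x))(\nu\otimes\rho)$, which is a probability measure on $\mathcal Y\times\mathcal Z$, differentiation gives $\tfrac{d}{dx}\tilde f^{n+1}(x)=\int\big(-\partial_xc(x,y,z)-\tfrac{d}{dx}\tilde h^{n+1}(x)\,(y-x)+\tilde h^{n+1}(x)\big)\,p_x(dy,dz)$. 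The defining relation \eqref{eq:h_equality_conitnuous} is precisely $\int(y-x)\,p_x(dy,dz)=0$, so the middle term drops out and $\tfrac{d}{dx}\tilde f^{n+1}(x)=\tilde h^{n+1}(x)-\int\partial_xc(x,y,z)\,p_x(dy,dz)$, of modulus at most $C_h'+L_c$. (Without this cancellation one can instead bound the middle term by $R\sup_n|\tfrac{d}{dx}\tilde h^{n+1}|_{L^\infty(\mu)}$, already controlled in Step 1.)

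\textbf{Step 3 (the potential $\tilde g^{n+1}$).} Analogously, with $J(y):=\int_{\mathcal X\times\mathcal Z}\exp(-c(x,y,z)-\tilde f^{n+1}(x)-\tilde h^{n+1}(x)(y-x))(\mu\otimes\rho)(dx,dz)=e^{\tilde g^{n+1}(y)}$ and $q_y:=J(y)^{-1}\exp(-c-\tilde f^{n+1}-\tilde h^{n+1}(y-x))(\mu\otimes\rho)$ a probability measure on $\mathcal X\times\mathcal Z$, differentiation in $y$ gives $\tfrac{d}{dy}\tilde g^{n+1}(y)=\int\big(-\partial_yc(x,y,z)-\tilde h^{n+1}(x)\big)\,q_y(dx,dz)$, of modulus at most $L_c+C_h'$. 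Collecting Steps 1--3 proves the lemma.
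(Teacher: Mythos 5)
Your proposal is correct and follows essentially the same route as the paper: differentiate the first-order condition for $\tilde h^{n+1}$ (implicit-function-theorem style), lower-bound the $\int(y-x)^2$ term via $\mathrm{Var}(\nu)>0$ together with the uniform potential bounds, then differentiate the explicit log-integral formulas for $\tilde f^{n+1}$ and $\tilde g^{n+1}$. The only cosmetic differences are that you bound the $t$-term in $\partial_x\Psi$ by $C_h'$ where the paper cancels it via the martingale property, and conversely you use that cancellation in Step 2 where the paper instead invokes the already-established bound on $d\tilde h^{n+1}/dx$.
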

\begin{proof}{Proof}
    First, we compute the derivatives of $\tilde h^{n+1}$. By the first order condition \eqref{eq:h_equality_conitnuous} we have  
\begin{equation}
\label{eq:temp_mg_prop}
\begin{aligned}
    0 = \int_{\mathcal Y\times\mathcal Z} (y-x)\exp(-c(x,y,z)-{ g}^{n}(y)-\tilde h^{n+1}(x)y)(\nu\otimes\rho)(dy,dz).
\end{aligned}
\end{equation}
 By differentiating the equality \eqref{eq:temp_mg_prop} with respect to $x$, we obtain
\begin{equation*}
     \begin{aligned}
    0  = \int_{\mathcal Y\times\mathcal Z} &(y-x)\exp \big(-c(x,y,z)-{ g}^{n}(y)-\tilde h^{n+1}(x)y\big)\Big(-\frac{\partial c}{\partial x}(x,y,z) -y\frac{d\tilde h^{n+1}(x)}{dx}\Big)\\
    & (\nu\otimes\rho)(dy,dz) - \int_{\mathcal Y\times\mathcal Z} \exp(-c(x,y,z)-{ g}^{n}(y)-\tilde h^{n+1}(x)y)(\nu\otimes\rho)(dy,dz).
\end{aligned}
\end{equation*}
For fixed $x\in \mathcal X_0$, define:
\begin{equation*}
    \frac{d\pi}{d(\nu\otimes \rho)}(y,z):=\frac{\exp(-c(x,y,z)-g^n(y)-\tilde h^{n+1}(x)y)}{\int_{\mathcal Y\times \mathcal Z} \exp(-c(x,y,z)-g^n(y)-\tilde h^{n+1}(x)y)d(\nu\otimes \rho)}.
\end{equation*}
It follows
\begin{equation*}
    1 = \int_{\mathcal Y \times \mathcal Z} (y-x)\Big(-\frac{\partial c}{\partial x}(x,y,z) -y\frac{d\tilde h^{n+1}(x)}{dx}\Big)\pi(dy,dz).
\end{equation*}
Using the martingale property \eqref{eq:temp_mg_prop} of $\pi$, we obtain
\begin{equation}
\label{eq:temp_diff_h}
      \frac{d\tilde h^{n+1}(x)}{dx}\cdot \int_{\mathcal Y \times \mathcal Z} (y-x)^2\pi(dy,dz) = -1 - \int_{\mathcal Y \times \mathcal Z} (y-x)\frac{\partial c}{\partial x}(x,y,z)\pi(dy,dz).
\end{equation}
On the other hand, due to Proposition \ref{prop:well_defined_fgh} and Theorem \ref{thm:uniform_bound_conti}, the functions $\tilde h^{n+1}(x)$ and $g^n(y)$ are uniformly bounded. Hence, we obtain the following lower bound:
\begin{equation*}
\begin{aligned}
    \int_{\mathcal Y \times \mathcal Z} (y-x)^2\pi(dy,dz)& \ge \int_{\mathcal Y} (y-x)^2d\nu \exp(-|c|_{L^\infty}-C_g-C_h|\mathcal Y) \\
    &\ge {\text Var}(\nu)\exp(-|c|_{L^\infty}-C_g-C_h|\mathcal Y).
\end{aligned}
\end{equation*}
Together with \eqref{eq:temp_diff_h} and the fact that the right hand side of \eqref{eq:temp_diff_h}  is bounded by $1+L_c(|\mathcal X| + |\mathcal Y|)$, the derivative $|d\tilde h^{n+1}/dx|$ has a uniform bound. Further, the derivative of $d\tilde f^{n+1}/dx$:
\begin{equation*}
    \frac{d\tilde f^{n+1}}{dx}(x)=\int_{\mathcal Y\times \mathcal Z}\Big(-\frac{\partial c}{\partial x}(x,y,z) -\frac{d\tilde h^{n+1}}{dx}(x)(y-x)+\tilde  h^{n+1}(x)\Big )d\pi,
\end{equation*}
can be bounded uniformly  by $|c|_{L^\infty}+|d\tilde h^{n+1}/dx|_{L^\infty(\mu)}(|\mathcal X| + |\mathcal Y|) + |\tilde h^{n+1}|_{L^\infty(\mu)} $. In a similar manner, one can also show that the derivative $d\tilde g^{n+1}/dy$ admits a uniform bound. 

\end{proof}
\begin{proof}{Proof of Theorem \ref{thm:existence_minimizer_linear_convergence}} We divide the proof to several steps.

\vspace{0.1cm}

\noindent \textit{1. Exponential convergence of $\mathcal G$.}
We estimate the increment of $\mathcal G(f^n,g^n,h^n)$ by Sinkhorn's algorithm at each step:
\begin{equation}
\label{eq:descend_rate_h}
\begin{aligned}
    \mathcal G(f^{n+1},&g^{n+1},h^{n+1}) - 
    \mathcal G(f^{n},g^{n},h^{n}) = \big (\mathcal G({f}^{n},g^{n},\tilde{h}^{n+1}) - 
    \mathcal G(f^{n},g^{n},h^{n})\big) \\
    & +\big (\mathcal G(\tilde{f}^{n+1},g^{n},\tilde{h}^{n+1}) - 
    \mathcal G(f^{n},g^{n},\tilde h^{n+1})\big) + \big(\mathcal G(\tilde{f}^{n+1},\tilde{g}^{n+1},\tilde{h}^{n+1}) - 
    \mathcal G(\tilde{f}^{n+1},g^{n},\tilde{h}^{n+1})\big).
\end{aligned}
\end{equation}
Estimating the value of $\mathcal G$ around $(\tilde{f}^{n+1},\tilde{g}^{n+1},\tilde{h}^{n+1})$ as in the first inequality of \eqref{eq:concave_equality}, we obtain
\begin{equation}
\label{eq:descend_rate_h2}
\begin{aligned}
    \mathcal G&(\tilde{f}^{n+1},g^{n},\tilde{h}^{n+1}) - \mathcal G(\tilde{f}^{n+1},\tilde{g}^{n+1},\tilde{h}^{n+1}) \\
    &\le-\int_{\mathcal Y}\frac{\delta \mathcal G}{\delta g}(\tilde{f}^{n+1}, \tilde{g}^{n+1},\tilde{h}^{n+1}) (\tilde{g}^{n+1} - g^{n}) \nu(dy)  - \kappa\left|\tilde{g}^{n+1} - g^n\right|_{L^2(\nu)}^2 \\
    & = -\kappa\left|\tilde{g}^{n+1} - g^n\right|_{L^2(\nu)}^2,
\end{aligned}
\end{equation}
Because of the optimality of $\tilde{g}^{n+1}$, the functional derivative $\frac{\delta \mathcal G}{\delta g} (\tilde{f}^{n+1},\tilde{ g}^{n+1},\tilde{h}^{n+1})$ equals to zero. Similarly, the inequality related to the update of $f^n$ can be obtained as follows,
\begin{equation}
\label{eq:descend_rate_h3}
\mathcal G(f^{n},g^{n},\tilde h^{n+1}) - \mathcal G(\tilde{f}^{n+1},g^{n},\tilde{h}^{n+1}) \le -\kappa|\tilde{f}^{n+1}-f^n|_{L^2(\mu)}^2.
\end{equation}
While for the update of $\tilde h^{n+1}$: 
\begin{equation}
\label{eq:descend_rate_h4}
\begin{aligned}
    \mathcal G({f}^{n},g^{n},\tilde{h}^{n+1}) &- \mathcal G(f^{n},g^{n},h^{n})\le -\kappa\int_{\mathcal X\times \mathcal Y}|(\tilde h^{n+1}-h^n)(y-x)|^2d\mu \otimes \nu\\
    & = - \kappa\int_{\mathcal X}|(\tilde h^{n+1}-h^n)x|^2d\mu  - \kappa\text{Var}(\nu)\int_{\mathcal X}|\tilde h^{n+1}-h^n|^2d\mu \\
    & \le -\kappa\text{Var}(\nu)\int_{\mathcal X}|\tilde h^{n+1}-h^n|^2d\mu.
\end{aligned}
\end{equation}
These three inequalities characterize the ascent behavior at each iteration of Sinkhorn's algorithm. On the other hand, by the coercivity property stated in Lemma \ref{lem:Gconcavity}, we have that for any $(\tilde f,\tilde g,\tilde h)\in \mathcal S^N$,
\begin{equation*}
\begin{aligned}
    & \quad \mathcal G(\tilde f,\tilde g,\tilde h)- \mathcal G(f^{n},g^{n},h^{n})\\
& \le \frac{1}{4m_h}  \left|\frac{\delta \mathcal G}{\delta h} (f^{n}, g^{n},h^{n}) \right|_{L^2(\mu)}^2 + \frac{1}{4m_f}  \left|\frac{\delta \mathcal G}{\delta f} ( f^n, g^{n},h^{n}) \right|_{L^2(\mu)}^2 +\frac{1}{4m_g}  \left|\frac{\delta \mathcal G}{\delta g} ( f^n,  g^n,h^{n}) \right|_{L^2(\nu)}^2\\
    & \le\frac{1}{4m_h}\left|\frac{\delta \mathcal G}{\delta h} (f^{n}, g^{n},h^{n}) - \frac{\delta \mathcal G}{\delta h} (f^{n}, g^{n},\tilde{h}^{n+1})\right|_{L^2(\mu)}^2 \\
    & \quad + \frac{1}{4m_f} \left|\frac{\delta \mathcal G}{\delta f} (f^{n}, g^{n},h^{n}) - \frac{\delta \mathcal G}{\delta f} (\tilde f^{n+1}, g^{n},\tilde{h}^{n+1})\right|_{L^2(\mu)}^2\\
    & \quad +  \frac{1}{4m_g}\left|\frac{\delta \mathcal G}{\delta g} (f^{n}, g^{n},h^{n}) - \frac{\delta \mathcal G}{\delta g} (\tilde{f}^{n+1}, \tilde{g}^{n+1},\tilde{h}^{n+1})\right|_{L^2(\nu)}^2.
\end{aligned}
\end{equation*}
Taking supremum over all $(\tilde f,\tilde g,\tilde h)\in \mathcal S$ with normalization conditions on both sides, we get
\begin{equation*}
\begin{aligned}
    \mathcal G^{**} -\mathcal G(f^{n},& g^{n},h^{n}) := \sup_{(f,g,h)\in\mathcal S^N}
\mathcal{G}(f, g, h) - \mathcal{G}(f^n, g^n, h^n) \\
    &\le \frac{1}{4m_h}\left|\frac{\delta \mathcal G}{\delta h} (f^{n}, g^{n},h^{n}) - \frac{\delta \mathcal G}{\delta h} (f^{n}, g^{n},\tilde{h}^{n+1})\right|_{L^2(\mu)}^2  \\
    & \quad + \frac{1}{4m_f} \left|\frac{\delta \mathcal G}{\delta f} (f^{n}, g^{n},h^{n}) - \frac{\delta \mathcal G}{\delta f} (\tilde f^{n+1}, g^{n},\tilde{h}^{n+1})\right|_{L^2(\mu)}^2\\
    & \quad +  \frac{1}{4m_g}\left|\frac{\delta \mathcal G}{\delta g} (f^{n}, g^{n},h^{n}) - \frac{\delta \mathcal G}{\delta g} (\tilde{f}^{n+1}, \tilde{g}^{n+1},\tilde{h}^{n+1})\right|_{L^2(\nu)}^2.
\end{aligned}
\end{equation*}
 By inequality \eqref{eq:linear_func_upper} of Lemma \ref{lem:linear_func_upper} we have
\begin{equation*}
    \begin{aligned}
        \left|\frac{\delta \mathcal G}{\delta h} (f^{n}, g^{n},h^{n}) - \frac{\delta \mathcal G}{\delta h} (f^{n}, g^{n},\tilde{h}^{n+1})\right|_{L^2(\mu)}^2\le M_h^2(|\mathcal X| + |\mathcal Y|)^2|h^n-\tilde{h}^{n+1}|^2_{L^2(\mu)}.
    \end{aligned}
\end{equation*}
Bounds for the other two terms can be derived similarly. Combining the inequalities above, we obtain 
\begin{equation}
\label{eq:coercivity}
    \begin{aligned}
     \mathcal G^{**} - \mathcal G(f^{n},g^{n},h^{n}) \le & \Big(\frac{M_f^2}{4m_f}+\frac{M_h^2}{4m_h}\Big)|f^n-\tilde{f}^{n+1}|_{L^2(\mu)}^2 + \frac{M_g^2}{4m_g}|g^n-\tilde{g}^{n+1}|_{L^2(\nu)}^2 \\
     & + (|\mathcal X| + |\mathcal Y|)^2\Big(\frac{M_f^2}{4m_f} + \frac{M_g^2}{4m_g}+\frac{M_h^2}{4m_h}\Big)|h^n-\tilde{h}^{n+1}|_{L^2(\mu)}^2.
    \end{aligned}
\end{equation}

Combining the ascent estimates \eqref{eq:descend_rate_h2} and \eqref{eq:descend_rate_h3} with the coercivity inequality \eqref{eq:coercivity}, we obtain the exponential convergence. Define the constant 
\begin{equation}
\label{eq:C_s}
C_s = \max\left\{\Big(\frac{M_f^2}{4m_f}+\frac{M_h^2}{4m_h}\Big)\kappa^{-1}, \frac{M_g^2}{4m_g}\kappa^{-1}, \Big(\frac{M_f^2}{4m_f} + \frac{M_g^2}{4m_g}+\frac{M_h^2}{4m_h}\Big)\big(\kappa\text{Var}(\nu)\big)^{-1}\right\}
\end{equation}
and we have
\begin{equation}
\begin{aligned}
    \mathcal G^{**}-& \mathcal{G}(f^{n+1},g^{n+1},h^{n+1}) \\
    & \le C_s\big(\mathcal{G}(f^{n+1},g^{n+1},h^{n+1}) - \mathcal{G}(f^{n},g^{n},h^{n})\big) \\
    & =  C_s\Big(\mathcal G^{**} - \mathcal{G}(f^{n},g^{n},h^{n}) - \big(\mathcal G^{**}- \mathcal{G}(f^{n+1},g^{n+1},h^{n+1})\big)\Big),
\end{aligned}
\end{equation}
which implies that 
\begin{equation}
\label{eq:linear_cvgz}
    \mathcal G^{**} - \mathcal{G}(f^{n+1},g^{n+1},h^{n+1}) \le \Big(1 - \frac{1}{C_s+1}\Big) \Big(\mathcal G^{**} - \mathcal{G}(f^{n},g^{n},h^{n}) \Big).
\end{equation}
This will imply the desired exponential convergence of Sinkhorn's algorithm, once we prove $\mathcal G^* = \mathcal G^{**}$.
\vspace{0.1cm}

\noindent\textit{2. Existence of limit dual $(f^*,g^*,h^*)$.}
To prove there exists a convergent subsequence of $f^n,h^n$ on domain $\mathcal X_0$ and $g^n$ on $ \mathcal Y_0$, we are going to use the Arzelà–Ascoli theorem. By the uniform $L^\infty$ boundedness of the triples $(\tilde f^{n+1},\tilde g^{n+1},\tilde h^{n+1})_n$ and their derivatives, as we proved in Lemma \ref{lem:uniform_bdd_deri}, there exists a subsequence of $(\tilde f^{n_k}\times\tilde g^{n_k}\times\tilde h^{n_k})_k$ uniformly converges to $(\tilde f^*,\tilde g^*,\tilde h^*)\in(L^\infty(\mu), L^\infty(\nu), L^\infty(\mu))$. 

Meanwhile, the normalized coefficients $(f^{n_k},g^{n_k},h^{n_k})_k$ defined in  \eqref{eq:normalization_iter_n} also converge and we denote their limit as $(f^*,g^*,h^*)=(\tilde f^*+\int_{\mathcal Y}\tilde g^*d\nu -x \int_{\mathcal X}\tilde h^*d\mu,\tilde g^*-\int_{\mathcal Y}\tilde g^*d\nu +y \int_{\mathcal X}\tilde h^*d\mu,\tilde h^*-\int_{\mathcal X}\tilde h^*d\mu )$. In particular, $( f^*, g^*, h^*)\in \mathcal S^N\subset(L^\infty(\mu)\times L^\infty(\nu)\times L^\infty(\mu))$.
\vspace{0.1cm}

\noindent\textit{3. Optimality of $(f^*,g^*,h^*)$ and $\mathcal G^*=\mathcal G^{**}.$}
First we show that 
$$ \mathcal G(f^*,g^*,h^*) = \mathcal G^{**} = \sup_{(f,g,h)\in\mathcal S^N}\mathcal{G}(f,g,h). $$ 
Recall the definition of $\mathcal G$,
\begin{equation*}
\begin{aligned}
       \mathcal G(f^n,g^n,h^n) = & \int_{\mathcal X\times\mathcal Y\times \mathcal Z} \exp\big(-c(x,y,z)-f^n(x)-g^n(y)-h^n(x)(y-x)\big)d\mu\otimes \nu\otimes\rho \\ 
       &- \int_{\mathcal X} f^n d\mu - \int_{\mathcal Y} g^n d\nu.
\end{aligned}
\end{equation*}
Recall that $(f^n,g^n,h^n)$ are uniformly bounded. Applying the bounded convergence theorem to the equality above, we have $\mathcal G(f^*,g^*,h^*)=\lim_k \mathcal G(f^{n_k},g^{n_k},h^{n_k})=G^{**}$. In order to prove $\mathcal G^*=\mathcal G^{**}$, that is, the triple $(f^*,g^*,h^*)$ is a solution of the dual problem without constraint $(f,g,h)\in \mathcal S$, we are going to verify the first-order conditions for the dual problem:
\begin{equation}
\label{eq:temp_h_FOC}
    \int_{\mathcal Y\times\mathcal Z} (y-x) \exp(-c(x,y,z)-f^{*}(x)-g^*(y)-h^{*}(x) (y-x))d\nu\otimes\rho = 0,
\end{equation}
 for $x\  \mu\text{-}a.s.$, as well as the first-order conditions for variables $f,g$:
 \begin{equation}
\label{eq:temp_fg_FOC}
\begin{aligned}
    &\int_{\mathcal Y\times\mathcal Z} \exp(-c(x,y,z)-f^{*}(x)-g^*(y)-h^{*}(x) (y-x))d\nu\otimes\rho = 1, \ \ \text{for}\ x\ \mu\text{-}a.s.\\
    &\int_{\mathcal X\times\mathcal Z} \exp(-c(x,y,z)-f^{*}(x)-g^*(y)-h^{*}(x) (y-x))d\mu\otimes\rho = 1, \ \ \text{for}\ y\ \nu\text{-}a.s..
\end{aligned}
\end{equation}
In fact, in Sinkhorn's iteration the dual coefficients satisfy 
\begin{equation*}
    \int_{\mathcal Y\times\mathcal Z} (y-x) \exp(-c(x,y,z)-f^{n+1}(x)-g^n(y)-h^{n+1}(x) (y-x))d\nu\otimes\rho = 0.
\end{equation*}
Then \eqref{eq:temp_h_FOC} follows from the bounded convergence theorem and the uniform boundedness of coefficients. The first-order conditions for $f,g$ variables \eqref{eq:temp_fg_FOC} can be shown similarly. Recall from Lemma \ref{lem:strong_convave_G} that the mapping $(f,g,h)\mapsto \mathcal G(f,g,h)$ is strongly concave on $\mathcal S^N$. Hence, $(f^*,g^*,h^*)$ is the unique solution to the dual problem \eqref{eq:dual_problem} under the normalization conditions. By this uniqueness, we conclude that the sequence of potential triples \((f^n, g^n, h^n)\) converges and can only converge to \((f^*, g^*, h^*)\).
Thus, we have proved the first two arguments \eqref{eq:thm_exp_cvgz} and \eqref{eq:Gstar-Gfuncstar} in Theorem \ref{thm:existence_minimizer_linear_convergence}.

\vspace{0.1cm}
\noindent\textit{4. $L^2$ exponential convergence of dual functions.} As for the third statement of Theorem \ref{thm:existence_minimizer_linear_convergence}, using Lemma \ref{lem:strong_convave_G} and \ref{lem:strong_convave_G2} , we deduce that
\begin{equation}
\begin{aligned}
    m_f \int_{\mathcal X}|f^n-f^*|^2d\mu &+m_g\int_{\mathcal Y}| g^{n}-g^*|^2d\nu +m_h\int_{\mathcal X}| h^{n}-h^*|^2d\mu\\
    &\le \mathcal G(f^{*},g^{*},h^{*})-  \mathcal G(f^{n},g^{n},h^{n}) \le C \big( 1-\frac{1}{C_s+1} \big)^n
\end{aligned}
\end{equation}
 for some constant $C>0$ and then the inequality \eqref{eq:L2convergence_formula} follows.

\vspace{0.1cm}
\noindent\textit{5. Optimality of induced martingale transport $\pi^*$.} Also due to the first-order conditions \eqref{eq:temp_h_FOC} and \eqref{eq:temp_fg_FOC}, it implies that $\pi(f^*,g^*,h^*)\in\mathcal M(\mu,\nu)$. 
As for the last argument, for any other martingale transport $ \pi'\in\mathcal M(\mu,\nu)$, it follows from the convexity of the mapping $\pi \mapsto H(\pi|Q)$ that
\begin{equation*}
\begin{aligned}
 H( \pi'|Q)  - &H\big(\pi(f^*,g^*,h^*)|Q\big)\ge \int_{\mathcal X\times \mathcal Y\times\mathcal Z} \big(\log(\pi(f^*,g^*,h^*))-\log Q\big)d( \pi' -  \pi(f^*,g^*,h^*)) \\
 & = \int_{\mathcal X\times \mathcal Y\times\mathcal Z} \big(-f^*(x)-g^*(y) - h^*(x) (y-x)\big)d( \pi' -  \pi(f^*,g^*,h^*)) = 0, \\
 &= \int_{\mathcal X} -f^*(x)(d\mu-d\mu) -\int_{\mathcal Y} g^*(y)(d\nu-d\nu) = 0, 
\end{aligned}
\end{equation*}
where the last equality holds because $\pi',\pi(f^*,g^*,h^*)\in\mathcal M(\mu,\nu)$. Thus, we have shown that the probability measure $\pi(f^*,g^*,h^*) \in \mathcal M(\mu,\nu)$ is indeed the solution to the primal EMOT problem \eqref{eq:mg_transport}. 
\end{proof}
\nocite{*}

%
%
%

\section*{Appendix}
\subsection*{Well-definedness of Sinkhorn's steps}
\label{sec:well_defined_fgh}
\begin{proposition}
\label{prop:boundary_condition}
    { From the definition in Assumption \ref{assu:mu_nu},} under Assumption \ref{assu:exsitence_mg_transport_conti}, the essential bound of $\mu$, $\nu$ satisfies $\overline{X}<\overline{Y}$ and $\underline{X}>\underline{Y}$.
\end{proposition}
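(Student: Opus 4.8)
The plan is to read off the strict separation from the martingale property of the reference transport $\bar\pi$ in Assumption \ref{assu:exsitence_mg_transport_conti} together with the two-sided bound on its density. Write $P:=|p|_{L^{\infty}(\mu\otimes\nu\otimes\rho)}<\infty$, so that $e^{-P}\le\exp(-p(x,y,z))\le e^{P}$ for $(\mu\otimes\nu\otimes\rho)$-a.e.\ $(x,y,z)$. I would first record the book-keeping: by definition of $\mathcal X_0$ (resp.\ $\mathcal Y_0$), the quantity $\overline{\mathcal X}$ (resp.\ $\overline{\mathcal Y}$) is the essential supremum of $\mu$ (resp.\ $\nu$) and $\underline{\mathcal X},\underline{\mathcal Y}$ the essential infima, so in particular $\underline{\mathcal Y}\le y\le\overline{\mathcal Y}$ holds $\nu$-a.e.; moreover $\overline{\mathcal Y}>0>\underline{\mathcal Y}$, since $\int y\,\nu(dy)=0$ and $\nu\neq\delta_{y_0}$ force $\nu((0,\infty))>0$ and $\nu((-\infty,0))>0$, and $\overline{\mathcal Y},\underline{\mathcal Y}$ are finite because $\nu$ is compactly supported.

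Next I would disintegrate $\bar\pi$ along the $\mathcal X$-coordinate. Since $d\bar\pi/d(\mu\otimes\nu\otimes\rho)=\exp(-p)$ and $\bar\pi$ has $\mathcal X$-marginal $\mu$, integrating out $(y,z)$ shows $\int_{\mathcal Y\times\mathcal Z}\exp(-p(x,y,z))(\nu\otimes\rho)(dy,dz)=1$ for $\mu$-a.e.\ $x$; hence for $\mu$-a.e.\ $x$ the conditional law is the probability measure $\bar\pi_{y,z|x}=\exp(-p(x,y,z))\,(\nu\otimes\rho)$, and the martingale constraint becomes
\[
x=\int_{\mathcal Y\times\mathcal Z} y\,\exp(-p(x,y,z))\,(\nu\otimes\rho)(dy,dz),\qquad \mu\text{-a.e.\ }x .
\]

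The conclusion is then a one-sided estimate. For $\mu$-a.e.\ $x$, using $\overline{\mathcal Y}-y\ge0$ $\nu$-a.e., $\exp(-p)\ge e^{-P}$ and $\int y\,\nu(dy)=0$,
\[
\overline{\mathcal Y}-x=\int_{\mathcal Y\times\mathcal Z}(\overline{\mathcal Y}-y)\exp(-p(x,y,z))\,(\nu\otimes\rho)(dy,dz)\ \ge\ e^{-P}\int_{\mathbb R}(\overline{\mathcal Y}-y)\,\nu(dy)=e^{-P}\,\overline{\mathcal Y}>0 ,
\]
so $x\le\overline{\mathcal Y}(1-e^{-P})$ for $\mu$-a.e.\ $x$; taking the essential supremum over $x$ gives $\overline{\mathcal X}\le\overline{\mathcal Y}(1-e^{-P})<\overline{\mathcal Y}$, since $\overline{\mathcal Y}>0$. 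The mirror computation with $y-\underline{\mathcal Y}\ge0$ yields $x-\underline{\mathcal Y}\ge -e^{-P}\underline{\mathcal Y}>0$ for $\mu$-a.e.\ $x$, whence $\underline{\mathcal X}\ge\underline{\mathcal Y}(1-e^{-P})>\underline{\mathcal Y}$.

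The one point that requires care — and the reason Assumption \ref{assu:exsitence_mg_transport_conti} is invoked with a \emph{bounded} potential $p$ rather than merely with $\bar\pi\sim\mu\otimes\nu\otimes\rho$ — is the passage from the pointwise bound to the strict inequality between essential bounds: equivalence alone would only give $x<\overline{\mathcal Y}$ for $\mu$-a.e.\ $x$, which does not exclude $\overline{\mathcal X}=\overline{\mathcal Y}$, whereas the $x$-uniform gap $e^{-P}\overline{\mathcal Y}>0$ forces a genuine separation (and symmetrically for the infima). Beyond this, and the routine identification of $\overline{\mathcal X},\overline{\mathcal Y}$ with essential suprema via the definition of $\mathcal X_0,\mathcal Y_0$, I foresee no real obstacle.
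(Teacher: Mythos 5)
Your proof is correct. It rests on exactly the same two ingredients as the paper's argument — the martingale property of $\bar\pi$ from Assumption \ref{assu:exsitence_mg_transport_conti} and the two-sided bound $e^{-P}\le \exp(-p)\le e^{P}$ on its density — but the execution differs: the paper argues by contradiction, assuming $\overline{\mathcal X}\ge\overline{\mathcal Y}$, splitting $(y-x)$ into positive and negative parts weighted by $e^{\pm P}$, and letting $x\to\overline{\mathcal X}$ along the support to reach a sign contradiction (using that $\nu$ is not a Dirac mass), whereas you disintegrate $\bar\pi$ in $x$ and compute directly that $\overline{\mathcal Y}-x=\int(\overline{\mathcal Y}-y)\,d\bar\pi_{y,z|x}\ge e^{-P}\,\overline{\mathcal Y}$ for $\mu$-a.e.\ $x$, and symmetrically at the lower endpoint. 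Your route buys something the paper's does not state: an explicit, $x$-uniform quantitative gap, $\overline{\mathcal X}\le(1-e^{-P})\,\overline{\mathcal Y}$ and $\underline{\mathcal X}\ge(1-e^{-P})\,\underline{\mathcal Y}$, and it sidesteps the limiting step in the paper, which needs the mild extra care that the martingale identity only holds $\mu$-a.e.\ so the limit must be taken along a suitable sequence in the support; your closing remark correctly identifies that the boundedness of $p$ (rather than mere equivalence $\bar\pi\sim\mu\otimes\nu\otimes\rho$) is what upgrades the pointwise strict inequality to a separation of the essential bounds. The only bookkeeping items, which you flag and which are indeed routine, are the identification of $\overline{\mathcal X},\overline{\mathcal Y},\underline{\mathcal X},\underline{\mathcal Y}$ with the essential extrema and the facts $\overline{\mathcal Y}>0>\underline{\mathcal Y}$ drawn from Assumption \ref{assu:mu_nu}(ii)--(iii), which are standing assumptions and are also used implicitly in the paper's proof.
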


\begin{proof}{Proof}
We prove by contradiction, suppose that $\overline{X} \ge \overline{Y}$. From the martingale property of $\bar \pi$ we know 
    \begin{equation*}
    \begin{aligned}
                0 & = \int_{\mathbb R\times\mathbb R} (y-x) \exp(-p(x,y,z)) \nu(dy)\rho(dz) \\
                &\le \int_{\mathbb R\times\mathbb R} (y-x)^+ \exp(|p|_{L^\infty}) \nu(dy)\rho(dz) -\int_{\mathbb R\times\mathbb R} (y-x)^- \exp(-|p|_{L^\infty}) \nu(dy)\rho(dz), \\
    \end{aligned}
    \end{equation*}
for the right-hand side, we can push limit $x\to\overline{X}$ to see that it converges to a negative value since the measure $\nu(dy)$ is not a Dirac mass, which is a contradiction. The remaining part of this proposition can be shown similarly. 
\end{proof}

\begin{remark}
    Indeed, Assumption 2.3 in \cite{nutz2024martingale} describes the fact of Proposition \ref{prop:boundary_condition} as well, but under our setting it is implied by Assumption \ref{assu:exsitence_mg_transport_conti}.
\end{remark}

\begin{proof}{Proof of Proposition \ref{prop:well_defined_fgh}} 
    We are going to prove that $\tilde f^{n+1}$,$\tilde g^{n+1}$,$\tilde h^{n+1}$ are well defined, and $\tilde f^{n+1},\tilde h^{n+1}\in L^\infty(\mu)\cap  C^1(\mathcal X_0)$, $\tilde g^{n+1} \in L^\infty(\nu)\cap C^1(\mathcal Y_0)$ for $n\in\mathbb N$. The $L^\infty$ boundedness and continuity of $(f^{n+1},g^{n+1},h^{n+1})_{n}$ can be obtained by the definition  \eqref{eq:normalization_iter_n}. We prove this statement by induction. Suppose that it holds true for some
$n \ge 0$. Recalling the Sinkhorn's step in defining the function $\tilde{h}^{n+1}$ on $\mathcal X_0$, for any fixed $x\in\mathcal X_0$ it satisfies:
\begin{equation}
    \label{eq:h_update_in_proof}
    \begin{aligned}
    \tilde h^{n+1}(x) &:= h(x) 
    \end{aligned}
\end{equation}
such that
\begin{equation*}
    \Phi(x,h(x)) = \int_{\mathcal Y\times\mathcal Z} (y-x)\exp\big(-c(x,y,z)-{ g}^{n}(y)-h(x) (y-x)\big)(\nu\otimes\rho)(dy,dz) = 0.
\end{equation*}
    The function $\Phi(x,h)$ is continuous and non-increasing in $h$. 
    Moreover, its limit behavior can be characterized as
    \begin{equation*}
        \lim_{h\to-\infty} \Phi(x,h) = +\infty \quad \text{and} \quad \lim_{h\to+\infty} \Phi(x,h) = -\infty.
    \end{equation*}
    Consequently, $\Phi(x,h)$ takes values across $0$. Hence, for each fixed $x\in\mathcal X_0$, the zero of the function $\Phi(x,h)$ exists and defines a finite value $\tilde h^{n+1}(x)$ pointwise. To establish the continuity of $h^{n+1}$, we apply the implicit function theorem (see, e.g., \cite{krantz2002implicit}). It follows that 
    \begin{equation*}
        \begin{aligned}
        \frac{\partial \Phi(x,\omega)}{\partial x} =  \int_{\mathcal Y\times\mathcal Z} \Big(-1 +(y-x)&\Big(-\frac{\partial c(x,y,z)}{\partial x}+\omega\Big)\Big) \\
        &\exp(-c(x,y,z)-{ g}^{n}(y)-\omega (y-x))(\nu\otimes\rho)(dy,dz),
        \end{aligned}
    \end{equation*}
    \begin{equation*}
        \frac{\partial \Phi(x,\omega)}{\partial \omega} =  -\int_{\mathcal Y\times\mathcal Z} |y-x|^2\exp(-c(x,y,z)-{ g}^{n}(y)-\omega(y-x))(\nu\otimes\rho)(dy,dz),
    \end{equation*}
    which are both continuous in $(x,\omega)$, and the derivative $\partial \Phi(x,\omega)/\partial \omega$ does not vanish anywhere.    
    The argument above shows that the zero point of $\omega\mapsto \Phi(x,\omega)$ exists for any $x\in\mathcal X_0$, so with the implicit function theorem, we proved $\tilde h^{n+1}\in \mathcal C^1(\mathcal X_0)$. To show $\tilde h^{n+1}\in L^{\infty}(\mu)$, we fix an $\epsilon>0$ to be sufficiently small such that  $\overline{X}+\epsilon < \overline{Y}$, which can be achieved  thanks to Proposition \ref{prop:boundary_condition}.  Note that for any $M>0$ there exists a constant $h_M<0$ such that for any $h\le h_M$, it holds
    \begin{equation}
    \label{eq:basic_ineq_exp_trun}
        \int_{[x+\epsilon,+\infty)} \exp(-h(y-x))\nu(dy) \ge M  \int_{(-\infty, x+\epsilon)} \exp(-h(y-x))\nu(dy).
    \end{equation}
    Hence, derived from the definition of function $\tilde h^{n+1}(x)$ we have 
    \begin{equation*}
    \begin{aligned}
    0 &=  \int_{\mathcal Y\times\mathcal Z} (y-x)\exp(-c(x,y,z)-g^n(y) - \tilde h^{n+1}(x)(y-x))(\nu\otimes\rho)(dy,dz)\\
    &\ge \int_{(-\infty,x]}(y-x)\exp(|c|_{L^\infty}+|g^n|_{L^\infty} - \tilde h^{n+1}(x)(y-x))\nu(dy)\\
    & + \int_{(x,x+\epsilon]}(y-x)\exp(-|c|_{L^\infty}-|g^n|_{L^\infty} -\tilde  h^{n+1}(x)(y-x))\nu(dy)\\
    & + \int_{(x+\epsilon,+\infty)} \epsilon\exp(-|c|_{L^\infty}-|g^n|_{L^\infty} - \tilde  h^{n+1}(x)(y-x))\nu(dy),
    \end{aligned}
    \end{equation*}
    and omit the second term because of the positivity. Hence, it should hold that
    \begin{equation*}
    \begin{aligned}
    (|\mathcal X| + |\mathcal Y|)\exp(2&|c|_{L^\infty}+2|g^n|_{L^\infty})\int_{(-\infty,x]}\exp(- \tilde h^{n+1}(x)(y-x))\nu(dy)\\
    &\ge  \int_{(x+\epsilon,+\infty)} \epsilon\exp( - \tilde  h^{n+1}(x)(y-x))\nu(dy).
    \end{aligned}
    \end{equation*}
Therefore if we denote $M:= (|\mathcal X| + |\mathcal Y|)\exp(2|c|_{L^\infty}+2|g^n|_{L^\infty})/\epsilon$, it can be shown that $\tilde h^{n+1}(x) > h_M$ otherwise it will violate \eqref{eq:basic_ineq_exp_trun}. Therefore, we find the lower bound for $\tilde h^{n+1}$ for any $x\in\mathcal X_0$, and the upper bound can be given similarly.

Finally, it remains to verify that $\tilde f^{n+1}\in L^\infty(\mu)\cap  C^1(\mathcal X_0)$ and $\tilde g^{n+1} \in L^\infty(\nu)\cap C^1(\mathcal Y_0)$.  From the explicit updating formulas for $\tilde{f}^{n+1}$ and $\tilde{g}^{n+1}$ (defined by \eqref{eq:update_formula_fg}), it directly follows that both functions are bounded and continuously differentiable, given the established fact that $\tilde h^{n+1}\in L^\infty(\mu)\cap C^1(\mathcal X_0)$. 
\end{proof}


\bibliographystyle{plain}
\bibliography{ref}

\end{document}